\documentclass[a4paper,11pt]{amsart}
\usepackage[margin=2.8cm]{geometry}

\usepackage[foot]{amsaddr}

\usepackage{hyperref}
\hypersetup{
    colorlinks,
    citecolor=green,
    filecolor=black,
    linkcolor=blue,
    urlcolor=blue
}
\hypersetup{linktocpage}

\usepackage{cite}
\usepackage{graphicx}

\usepackage{amsbsy}
\usepackage{latexsym}
\usepackage{amsfonts}
\usepackage{amssymb}
\usepackage{upgreek}
\usepackage[usenames]{color}
\usepackage{amsmath,amsthm}
\usepackage{enumerate}

\usepackage{tikz}
\usetikzlibrary{arrows}
\usepackage{mathdots}
\usepackage{mathtools}

\usepackage{stmaryrd}
\usepackage{dsfont}

\DeclareMathOperator{\linspan}{span}
\DeclareMathOperator*{\einf}{ess\,inf}

\providecommand{\abs}[1]{\lvert#1\rvert}
\providecommand{\bigabs}[1]{\bigl\lvert#1\bigr\rvert}

\providecommand{\norm}[1]{\lVert#1\rVert}
\providecommand{\bignorm}[1]{\bigl\lVert#1\bigr\rVert}
\providecommand{\Bignorm}[1]{\Bigl\lVert#1\Bigr\rVert}
\providecommand{\biggnorm}[1]{\biggl\lVert#1\biggr\rVert}

\providecommand{\ceil}[1]{\lceil#1\rceil}

\newtheorem{theorem}{Theorem}
\newtheorem{lemma}[theorem]{Lemma}
\newtheorem{prop}[theorem]{Proposition}
\newtheorem{cor}[theorem]{Corollary}
\newtheorem{ass}{Assumptions}

\theoremstyle{definition}
\newtheorem{definition}[theorem]{Definition}
\newtheorem{example}[theorem]{Example}

\theoremstyle{remark}
\newtheorem{remark}[theorem]{Remark}

\newcommand{\cA}{{\mathcal{A}}}

\newcommand{\cM}{{\mathcal{M}}}
\newcommand{\cN}{{\mathcal{N}}}
\newcommand{\cF}{{\mathcal{F}}}

\newcommand{\cV}{\mathcal{V}}
\newcommand{\cT}{\mathcal{T}}
\newcommand{\cI}{\mathcal{I}}
\newcommand{\cS}{\mathcal{S}}
\newcommand{\cO}{\mathcal{O}}

\newcommand{\bA}{\mathbf{A}}
\newcommand{\bC}{\mathbf{C}}
\newcommand{\bM}{\mathbf{M}}

\newcommand{\bu}{\mathbf{u}}
\newcommand{\bv}{\mathbf{v}}
\newcommand{\bw}{\mathbf{w}}
\newcommand{\bbf}{\mathbf{f}}
\newcommand{\bB}{\mathbf{B}}
\newcommand{\br}{\mathbf{r}}
\newcommand{\bd}{\mathbf{d}}
\newcommand{\bT}{\mathbf{T}}
\newcommand{\bs}{\mathbf{s}}

\newcommand{\bP}[1]{\mathbf{P}_{#1}}

\newcommand{\dd}{\mathrm{d}} 
\newcommand{\sdd}{\,\mathrm{d}}

\newcommand{\PP}{\mathbb{P}}

\newcommand{\Chi}{\raise .3ex
\hbox{\large $\chi$}}

\newcommand{\R}{\mathbb{R}}

\newcommand{\N}{\mathbb{N}}
\newcommand{\Z}{\mathbb{Z}}

\newcommand{\meas}{\operatorname{meas}}

\newcommand\dist{\mathop{\rm dist}}
\newcommand\diam{\mathop{\rm diam}}
\newcommand\supp{\mathop{\rm supp}}

\newcommand{\suppF}{\supp_\cF}

\newcommand{\treesupp}{\overline{\supp}^{\mathrm{T}}}

\newcommand{\well}[1]{\ell^\mathrm{w}_{#1}}
\newcommand{\wellp}{\ell^\mathrm{w}_{p}}

\usepackage[section]{algorithm}
\usepackage{algorithmicx}
\usepackage{algpseudocode}
\algrenewcommand\algorithmicrequire{\makebox[46pt][l]{\textrm{required:}}}
\algrenewcommand\algorithmicensure{\makebox[46pt][l]{\textrm{output:}}}
\algrenewcommand\algorithmicfunction{\textrm{function}}
\algrenewcommand\algorithmicwhile{\textrm{while}}
\algrenewcommand\algorithmicdo{}
\algrenewcommand\algorithmicend{\textrm{end}}
\algrenewcommand\algorithmicforall{\textrm{for all}}
\algrenewcommand\algorithmicfor{\textrm{for}}
\algrenewcommand\algorithmicrepeat{\textrm{repeat}}
\algrenewcommand\algorithmicuntil{\textrm{until}}
\algrenewcommand\algorithmicif{\textrm{if}}
\algrenewcommand\algorithmicthen{\textrm{then}}
\algrenewcommand\algorithmicelse{\textrm{else}}

\newcommand{\be}{\begin{equation}}
\newcommand{\ee}{\end{equation}}
\newcommand{\beq}{\begin{eqnarray}}
\newcommand{\beqq}{\begin{eqnarray*}}
\newcommand{\eeq}{\end{eqnarray}}
\newcommand{\eeqq}{\end{eqnarray*}}

\numberwithin{equation}{section}
\numberwithin{theorem}{section}

\theoremstyle{plain}

\title[An adaptive stochastic Galerkin method based on multilevel expansions]{An adaptive stochastic Galerkin method based on multilevel expansions of random fields: Convergence and optimality}
\author{Markus Bachmayr} 
\address{\rm Institut f\"ur Mathematik, Johannes Gutenberg-Universit\"at Mainz, Staudingerweg 9, 55128 Mainz, Germany}
\email[Markus Bachmayr]{bachmayr@uni-mainz.de}
\author{Igor Voulis}
\email[Igor Voulis]{ivoulis@uni-mainz.de}
\thanks{Funded in part by
 Deutsche Forschungsgemeinschaft (DFG, German Research Foundation) -- Projektnummer 233630050 -- TRR 146.}

\date{\today}

\newcommand{\APPLY}{\text{\textsc{Apply}}}
\newcommand{\RESAPPROX}{\text{\textsc{ResApprox}}}
\newcommand{\GALSOLVE}{\text{\textsc{GalSolve}}}
\newcommand{\GALAPPLY}{\text{\textsc{GalApply}}}
\newcommand{\TREEAPPROX}{\text{\textsc{TreeApprox}}}

\newcommand{\M}[1]{#1}

\begin{document}

\maketitle

\begin{abstract}
 The subject of this work is a new stochastic Galerkin method for second-order elliptic partial differential equations with random diffusion coefficients. It combines operator compression in the stochastic variables with tree-based spline wavelet approximation in the spatial variables. Relying on a multilevel expansion of the given random diffusion coefficient, the method is shown to achieve optimal computational complexity up to a logarithmic factor. In contrast to existing results, this holds in particular when the achievable convergence rate is limited by the regularity of the random field, rather than by the spatial approximation order. The convergence and complexity estimates are illustrated by numerical experiments.
 
 \smallskip
 \noindent \emph{Keywords.} parameter-dependent elliptic partial differential equations, stochastic Galerkin method, a posteriori error estimation, adaptive methods, complexity analysis
\smallskip

\noindent \emph{Mathematics Subject Classification.} {35J25, 35R60, 41A10, 41A25, 41A63, 42C10, 65D99, 65N50, 65T60}

\end{abstract}

\section{Introduction}
\label{sec:Intro}

In partial differential equations, one is frequently interested in efficient approximations of the mapping from coefficients in the equations to the corresponding approximate solutions. On a domain $D \subset \R^d$, we consider the elliptic model problem 
\begin{equation}\label{eq:diffusionequation}
   - \nabla \cdot ( a \nabla u ) = f \quad \text{on $D$,} \qquad u = 0 \quad \text{on $\partial D$,}
\end{equation}
where $f \in L_2(D)$ is given, and where we are interested in the dependence of the solutions $u$ on the diffusion coefficients $a$. Especially in the context of uncertainty quantification problems, one considers coefficients $a$ given as random fields on $D$ that can be parameterized by sequences $y = (y_\mu)_{\mu \in \cM}$ of independent scalar random variables $y_\mu$, where typically $\cM = \N$. This leads to the problem of approximating the solutions $u(y)$ for each realization $a(y)$ as a function of the countably many parameters $y$.

A variety of parameterizations of $a$ in terms of random function series have been considered in the literature. 
One instance that has found frequent use in applications are lognormal coefficients $a(y) = \exp( \sum_{\mu\in\cM} y_\mu \theta_\mu)$, where $\theta_\mu$ are functions on $D$ and $y_\mu \sim \cN(0,1)$ are independent. The functions $\theta_\mu$ are typically obtained from a Karhunen-Lo\`eve expansion of a given Gaussian random field. A model case with similar features, on which we focus here, are \emph{affinely parameterized} coefficients:
Assuming $\cM_0$ to be a countable index set with $0 \in \cM_0$ and taking $\cM = \cM_0 \setminus \{ 0 \}$, these are of the form 
\begin{equation} \label{eq:affinecoeff}
a(y) = \theta_0 + \sum_{\mu\in \cM} y_\mu \theta_\mu
\end{equation}
 with $\theta_\mu \in L_\infty(D)$ for $\mu \in \cM_0$, where $\einf_D \theta_0 > 0$. Up to rescaling $\theta_\mu$, we can assume $y_\mu \in [-1,1]$ for each $\mu \in \cM$. The weak formulation of \eqref{eq:diffusionequation} with coefficients \eqref{eq:affinecoeff} then reads: find $u(y) \in V:=H^1_0(D)$ such that
\begin{equation}\label{affinepde}
 	\int_D a(y) \nabla u(y) \cdot \nabla v\sdd x = f(v) \quad \text{\M{for all $v\in V$ and all} $y \in Y := [-1,1]^\cM$,}
\end{equation}
with given $f \in V'$. Well-posedness of the problem for all $y \in Y$ is ensured by the \emph{uniform ellipticity condition}
\begin{equation}\label{uniformellipticity} 
   \einf_D \biggl \{  \theta_0 - \sum_{\mu \in \cM} \abs{\theta_\mu} \biggr\} =: r > 0.
\end{equation}

The subject of this work are numerical methods \M{for computing} approximations of $u(y)$ by sparse product polynomial expansions in the stochastic variables $y$ for given coefficients of the type \eqref{eq:affinecoeff}. 
Methods of this type have been studied quite intensely in recent years; see, for instance, the review articles \cite{Schwab:11,CD} and the references given there.
A central point is that convergence rates can be achieved that depend on the spatial dimension $d$, but not on any dimensionality parameter concerning the parameters $y$. 
The approach of stochastic Galerkin discretizations, which we follow here, is particularly suitable for the construction of adaptive schemes.
Using multilevel structure in the expansion \eqref{eq:affinecoeff}, we obtain a method that converges at rates that are optimal for fully adaptive spatial and stochastic approximations. This holds even for random fields $a$ of low smoothness, with computational costs that scale linearly up to a logarithmic factor with respect to the number of degrees of freedom.

\subsection{Sparse polynomial approximations and stochastic Galerkin methods}
 For simplicity, we assume each $y_\mu$ to be uniformly distributed in $[-1,1]$; different distributions with finite support can be treated with minor modifications. With \M{$\sigma$} the uniform measure on $Y$, we thus consider the mapping $y\mapsto u(y)$ as an element of 
\[
  \cV := L_2(Y, V, \sigma) \simeq V \otimes L_2(Y,\sigma).
\]
\M{With \eqref{uniformellipticity}, it is easy to see that the parameter-dependent solution $u$ of \eqref{affinepde} satisfies $u \in \cV$ and can be equivalently characterized by the variational formulation}
\begin{equation}\label{variationalform}
  \M{\int_Y \int_D a(y) \nabla u(y) \cdot \nabla v(y)\sdd x \sdd\sigma(y) = \int_Y f\bigl(v(y)\bigr)\sdd\sigma(y) \quad \text{for all $v \in \cV$.}}
\end{equation}

From the univariate Legendre polynomials $\{ L_k \}_{k\in\N}$ that are orthonormal with respect to the uniform measure on $[-1,1]$, we obtain (see, e.g., \cite[\S2.2]{Schwab:11}) the orthonormal basis $\{ L_\nu \}_{\nu\in \cF}$ of product Legendre polynomials for $L_2(Y,\sigma)$, which for $y\in Y$ are given by
\[
L_\nu(y) = \prod_{\mu \in \cM} L_{\nu_\mu}(y_\mu) ,\quad \nu\in\cF = \{ \nu \in \N_0^\cM \colon \text{$\nu_\mu \neq  0$ for finitely many $\mu \in \cM$}\} .  
\]
For $u \in \cV$ as in \eqref{variationalform}, we have the basis expansion
\[
   u(y) = \sum_{\nu \in \cF} u_\nu L_\nu(y), \quad u_\nu = \int_Y u(y)\,L_\nu(y)\sdd\sigma(y) \in V .
\]
Restricting the summation over $\nu$ to a finite subset $F\subset \cF$ yields the \emph{semidiscrete} best approximations in $\cV$ by elements of $V \otimes \linspan \{ L_\nu \}_{\nu \in F}$.
Computable approximations are obtained by replacing each $u_\nu$ by an approximation from a finite-dimensional subspace $V_\nu \subset V$ (such as a subspace spanned by finite element or wavelet basis functions). In other words, we seek \emph{fully discrete} approximations of $u$ from spaces
\[
  \cV_{N} = \biggl\{ \sum_{\nu \in F} v_\nu L_\nu \colon  v_\nu \in V_\nu, \nu \in F  \biggr\} \subset \cV
\]
of dimension $N = \sum_{\nu \in F} \dim V_\nu$. In the present work, the spaces $V_\nu$ are chosen as spaces of piecewise polynomial functions of the spatial variables on adaptive grids. Note that due to the selection of the subset $F$, the original problem in countably many parametric dimensions is reduced to a finite but approximation-dependent effective dimensionality. 

The method considered here is based on the stochastic Galerkin variational formulation for $u_N \in \cV_N$,
\begin{equation}\label{stochgalerkin}
  \int_Y \int_D a(y) \nabla u_N(y) \cdot \nabla v(y)\sdd x \sdd\sigma(y) = \int_Y \M{f\bigl(v(y)\bigr)}\sdd\sigma(y) \quad \text{\M{for all} $v \in \cV_N$,}
\end{equation}
again with $a(y)$ as in \eqref{eq:affinecoeff}.
As a consequence of \eqref{uniformellipticity}, the bilinear form given by the left hand side of \eqref{stochgalerkin} is elliptic and bounded on $\cV$, and by C\'ea's lemma 
\[
   \norm{ u_N - u }_\cV \leq \frac{2 \norm{\theta_0}_{L_\infty} - r}{r} \min_{v \in \cV_N} \norm{v - u}_\cV \,,
\]
where we have used that \M{$r \leq a(y) \leq 2 \norm{\theta_0}_{L_\infty} - r$ for all $y \in Y$}.

\subsection{Convergence rates}

The first question in the construction of numerical methods is thus to identify $F$ and $(V_\nu)_{\nu \in F}$ such that $\min_{v \in \cV_N} \norm{u - v}_\cV$ is minimal, up to a fixed constant, for each given computational budget $N$. Under suitable assumptions, one can show that there exist $F$ and $(V_\nu)_{\nu \in F}$ such that
\begin{equation}\label{eq:convrates}
 	\min_{v \in \cV_N} \norm{u - v}_\cV  \leq C N^{-s}
\end{equation}
for some $s>0$, and choosing such $\cV_N$ ensures that the stochastic Galerkin solutions $u_N$ converge at the same rate.
One now aims to realize this choice by adaptive methods that only use the problem data $D$, $f$, and the expansion \eqref{eq:affinecoeff} of $a$ as input. These methods should also be \emph{universal}, that is, they should not require knowledge of $s$ in \eqref{eq:convrates}, but rather automatically realize the best possible rate $s$ for each given problem. A basic building block for such methods are computable \emph{a posteriori error estimates} for $u_N$.
Beyond the convergence of the computed approximations at optimal rates with respect to $N$, in practice the computational costs of constructing $\cV_N$ and $u_N$ are crucial. An adaptive method is said to be of \emph{optimal complexity} if the required number of elementary operations (and hence the computational time) is bounded by a fixed multiple of $N$.

As the basic approximability results in \cite{BCM,BCDS} show, the type of expansion \eqref{eq:affinecoeff} of the random field $a(y)$ plays a role in the rate $s$ that is achievable in \eqref{eq:convrates}. In contrast to Karhunen-Lo\`eve-type expansions in terms of functions $\theta_\mu$ with global supports on $D$, improved results can be obtained for expansions with $\theta_\mu$ that have localized supports. In particular, this is the case for $\theta_\mu$ with wavelet-type \emph{multilevel structure}, which we focus on in this work. 
To each $\mu \in \cM$ we assign a \emph{level} $|\mu|=\ell \in \N_0$. 
We assume $\theta_\mu$ to have the properties that there exists $C_1>0$ such that
 \begin{equation}\label{multilevel1}
\#\{ \mu : |\mu| = \ell \} \leq C_1 2^{d\ell} \quad \M{\text{for all $\ell \geq 0$}},
\end{equation}
and there exists $C_2>0$ such that for some $\alpha>0$,
\begin{equation}
\label{multilevel2}
  \sum_{|\mu|=\ell} \abs{\theta_{\mu}} \leq C_2 2^{-\alpha  \ell} \quad \text{\M{a.e.~in $D$, for all $\ell \geq 0$}}.
\end{equation}
Expansions of this type for several important classes of Gaussian random fields are constructed in \cite{Gittelson:12,BCM:18}, and it is thus natural to use such these also in the model case of affine parameterizations. For sufficiently regular $\theta_\mu$, the parameter $\alpha$ can be seen to correspond to the H\"older regularity of realizations of the random field $a(y)$.
Note that for multilevel basis functions, the condition \eqref{uniformellipticity} is less restrictive than for globally supported $\theta_\mu$; in particular, in the multilevel case, any H\"older smoothness index $\alpha>0$ is possible in \eqref{multilevel2}.

However, $s$ in \eqref{eq:convrates} is also constrained by the spatial regularity of the further problem data $f$ and $D$, as well as by the permissible choices of spaces $V_\nu$. The simplest option is to choose all $V_\nu$ equal to the same sufficiently rich subspace of $V$.
Several approximation results and adaptive schemes in the literature are based on choosing each $V_\nu$ from a fixed hierarchy of nested subspaces of $V$, such as wavelet subspaces or finite element spaces corresponding to uniformly refined meshes (see, e.g., \cite{CDS:11,Gittelson:13,MR3952679}).
For multilevel expansions with properties \eqref{multilevel1}, \eqref{multilevel2}, the results in \cite[\S8]{BCDS} show a potential advantage of choosing $V_\nu$ adapted specifically for each $\nu$, for instance by a separate adaptive finite element mesh for each $\nu$. 
For $d\geq 2$ and $\alpha \in (0,1]$, these results yield a rate $s = \frac{\alpha}{d} - \delta$ for any $\delta > 0$ in \eqref{eq:convrates}. 
Remarkably, this rate for fully discrete approximation is the same as established in \cite{BCM} for only semidiscrete approximation.
As noted in \cite{BCD}, this is also the same rate as for spatial approximation of a single realization of $u(y)$ in $H^1$ for $y \in Y$ drawn uniformly at random.
In other words, in this setting, the full stochastic dependence can be approximated at the same rate as a single realization of the random solution. This is related to the multilevel structure of the $\theta_\mu$ also reappearing to a certain degree in the coefficients $u_\nu$, but in a strongly $\nu$-dependent way that necessitates individually adapted spaces $V_\nu$.
 
\subsection{New contributions and relation to previous results}
In this work, we prove a new adaptive stochastic Galerkin scheme to have optimal computational complexity, up to a logarithmic factor, in realizing this convergence rate. To the best of our knowledge, this is the first such result for the case where the approximability is limited by \M{the decay in absolute value of the functions $\theta_\mu$ in the random field expansion (that is, by the smoothness parameter $\alpha$ in \eqref{multilevel2})} rather than by the approximation order of the spatial basis functions. In particular, we improve on a previous result based on wavelet operator compression from \cite{BCD}: the method analyzed there yields suboptimal rates that get closer to $\alpha/d$ for more regular spatial wavelet basis functions. For practically realizable degrees of regularity of the basis, however, the resulting rates for this previous method remain rather far from optimal.

Note that the situation is different when $\alpha$ is large in comparison to the approximation order of the spatial basis functions. In this case, which corresponds to a more rapidly convergent expansion \eqref{eq:affinecoeff}, the rate $s$ in \eqref{eq:convrates} is constrained, independently of $\alpha$, by the spatial approximation rate. In such a setting, optimality with respect to this spatial rate is obtained by the adaptive scheme from \cite{MR3164144}, which is also based on wavelet operator compression. In the present work, however, we focus on the case of sufficiently high-order spatial approximation such that the achievable rate $s$ is determined by the random field $a(y)$.

Many existing methods use spatial approximations by finite elements, for instance, as in \cite{MR3154028,MR3177362,MR3423228,MR3519560,MR4114136,MR3952679,BPR21}.
Convergence and complexity of such methods, however, has been established only to a more limited extent than for wavelet approximations. 
For a method using a single adaptively refined finite element mesh, convergence and quasi-optimal cardinality of this spatial mesh are shown in \cite{MR3423228}.
In contrast, independently adapted meshes are used in \cite{MR3154028} and \cite{MR3952679}. In the latter case, meshes for each Legendre coefficient are selected from a fixed refinement hierarchy. 
The method in \cite{MR3952679} as well as the analysis in \cite{MR4014787} rely on an unverified saturation assumption.
In \cite{BPR21}, a method using a separately adapted mesh for each Legendre coefficient is shown to produce approximations converging at optimal rates. However, this is done using a further strengthened saturation assumption, and there are no bounds on the computational complexity.
These finite element-based methods are all constructed for $\theta_\mu$ of general supports and do not make use of multilevel expansions of random fields. 

The main component of our new method is a scheme for error estimation by sufficiently accurate approximation of the full spatial-stochastic residual. For achieving improved computational complexity, it makes crucial use of the multilevel structure \eqref{multilevel1}, \eqref{multilevel2}. 
The spatial discretization is done by spline wavelets.
We combine a semidiscrete adaptive operator compression on the stochastic degrees of freedom, which is independent of the spatial discretization, with a tree-based evaluation of spatial residuals. 
In the latter step, we use that the spatial coefficients are approximated by piecewise polynomials, evaluate the wavelet coefficients using a multi-to-single-scale transform following \cite{S14}, and use tree coarsening (based on a modification of a result in \cite{B:07,B:18}) to identify new degrees of freedom by a bulk chasing criterion. With these ingredients at hand, the adaptive scheme can be constructed similarly to the ones in \cite{GHS} and \cite{S14}.
Due to the use of operations on trees, the complexity estimates for our method rely on tree approximability for the Legendre coefficients $u_\nu$.

The near-optimality result for our method can be summarized as follows: if the best fully discrete approximation $u_N$ with spatial tree structure in each Legendre coefficient requires a total number of $N = \mathcal{O}(\varepsilon^{-1/s})$ degrees of freedom for an error bound $\norm{ u - u_N }_\cV \leq \varepsilon$, then our method finds an approximation satisfying this error bound using $\mathcal{O}(\varepsilon^{-1/s} \abs{\log \varepsilon })$ arithmetic operations.
In addition, we show that for best approximations \M{with spatial tree structure}, one obtains the same convergence rates of best approximations as shown in \cite[\S8]{BCDS} for general sparse approximations. 
Altogether, this shows that for $\alpha\in(0,1]$ and $d\geq 2$, for all $s < \frac{\alpha}{d}$ the method requires $\mathcal{O}(\varepsilon^{-1/s})$ operations; in the special case $d=1$ this holds for all $s < \frac{2}{3} \alpha$. These results are confirmed by our numerical tests, which indicate that these statements continue to hold true for $\alpha > 1$.

The regularity requirements on the problem data are the same as for the underlying approximability statements from \cite{BCDS}, and unlike \cite{BCD}, the wavelet basis functions are only required to be $C^1$ splines.
The use of wavelets in this scheme allows us to avoid a number of technicalities in its analysis that would arise with finite element discretizations. However, in contrast to the existing methods with computational complexity bounds from \cite{BCD,MR3164144}, our basic strategy is generalizable to spatial approximation by finite elements.

\subsection{Outline and notation}

 In Sec.~2, we state our main assumptions on the problem data in \eqref{affinepde} and review the relevant approximability results for solutions. In Sec.~3, we discuss the basic construction of stochastic Galerkin schemes that our new method is based on and recapitulate a related previous operator compression result that leads to a suboptimal method. In Sec.~4, we describe the new residual approximation using tree approximation in the spatial discretization, a corresponding tree coarsening scheme, and solver for Galerkin discretizations. In addition, we verify that the sought solution has the required slightly stronger tree approximability. In Sec.~5, we analyze convergence and computational complexity of the resulting adaptive method. In Sec.~6, we illustrate these results by numerical experiments. We conclude with a summary of our findings and an outlook on further work in Sec.~7. 

By $A \lesssim B$, we denote that there exists $C>0$ independent of the quantities appearing in $A$ and $B$ such that $A \leq CB$. Moreover, we write $A \gtrsim B$ for $B\lesssim A$ and $A \sim B$ for $A\lesssim B\wedge B\lesssim A$. By $\meas(S)$, we denote the Lebesgue measure of a a subset $S$ of Euclidean space.
Where this cannot cause confusion, we write $\norm{\cdot}$ for the $\ell_2$-norm on the respective index set and $\langle \cdot, \cdot\rangle$ for the corresponding inner product.

\section{Sparse Approximations and Stochastic Galerkin Methods}

In this section, we summarize the results on convergence rates of sparse polynomial  approximations from \cite{BCM,BCDS} for coefficient expansions \eqref{eq:affinecoeff} in terms of functions $\theta_\mu$, $\mu\in\cM$, with multilevel structure.
While $\abs{\mu}$ describes the scale of $\theta_\mu$, for each fixed $|\mu|$, the index $\mu$ determines the spatial localization of this function.
Conditions \eqref{multilevel1} and \eqref{multilevel2} are satisfied in particular when $\theta_\mu$ correspond to a rescaled, level-wise ordered wavelet-like basis with following properties.

\begin{ass}\label{ass:wavelettheta}
We assume $\theta_\mu \in W^1_\infty(D)$ for $\mu \in \cM_0$ such that in addition to \eqref{multilevel1}, the following hold for all $\mu \in \cM$:
\begin{enumerate}[{\rm(i)}] 
\item $\diam \supp \theta_\mu\sim 2^{-|\mu|}$, 
\item there exists $M>0$ such that for each $\mu$,
\[
  \#\{ \mu' \in \cM \colon |\mu|=|\mu'|,\, \supp \theta_{\mu}\cap \supp \theta_{\mu'} \} \leq M ,
\] 
\item for some $\alpha>0$, one has $\norm{\theta_\mu}_{\M{L_\infty(D)}} \lesssim 2^{-\alpha |\mu|}$.
\end{enumerate}
\end{ass}

\subsection{Semidiscrete approximations}
We first consider sparse Legendre approximations of $u(y) \in V$ with respect to the parametric variables $y \in Y$.
For given $n \in \N$, selecting $F_n \subset \cF$ to comprise the indices of $n$ largest $\norm{u_\nu}_V$ yields the best $n$-term approximation of $u$ by product Legendre polynomials,
\[
   u_{F_n} := \sum_{\nu\in F_n} u_\nu \, L_\nu.
\]
The error in $\cV$ of approximating $u$ by $u_F$ decays with rate $\mathcal{O}(n^{-s})$ precisely when \M{the sequence $\bigl( \norm{u_\nu}_V \bigr)_{\nu\in\cF}$ is an element of the linear space $\cA^s(\cF)$ of sequences with finite quasi-norm}
\begin{equation}
\label{stdAsdef1}
  \bignorm{\bigl( \norm{u_\nu}_V \bigr)_{\nu\in\cF}}_{\cA^s(\cF)} := \sup_{n\in\N_0} (n+1)^s
  \inf_{\substack{  F \subset \cF \\   \# F \leq n}} \Bigl( \sum_{\nu \in \cF\setminus F} \norm{u_\nu}_V^2 \Bigr)^{\frac12}  \,.
\end{equation}
As a consequence of the Legendre coefficient estimates in \cite{BCM}, we have the following approximability result, which is an immediate consequence of \cite[Cor.~4.2]{BCM}.

\begin{theorem}
Let \eqref{uniformellipticity} as well as \eqref{multilevel1}, \eqref{multilevel2} hold. Then
\[
  \bigl( \norm{ u_\nu }_V \bigr)_{\nu \in \cF} \in \cA^s(\cF) \quad\text{for any $s < \frac{\alpha}d$.}
\]
\end{theorem}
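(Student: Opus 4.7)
The plan is to reduce the claim about the approximation class $\cA^s(\cF)$ to a plain $\ell^p$-summability statement for the sequence $\bigl(\norm{u_\nu}_V\bigr)_{\nu\in\cF}$ via Stechkin's inequality, and then to obtain the required $\ell^p$-summability directly from the coefficient estimates of \cite{BCM}. Recall that Stechkin's lemma asserts: for any sequence $c = (c_\nu)_{\nu\in\cF}$ with $c \in \ell^p(\cF)$ and $p \in (0,2)$, the best $n$-term $\ell^2$-approximation error obeys
\[
 \inf_{\substack{F\subset\cF \\ \#F\leq n}} \Bigl( \sum_{\nu \in \cF\setminus F} c_\nu^2 \Bigr)^{1/2} \leq C\, (n+1)^{-(1/p-1/2)} \norm{c}_{\ell^p},
\]
so by the definition \eqref{stdAsdef1}, the $\ell^p$-membership of $c$ with $1/p - 1/2 > s$ implies $c \in \cA^s(\cF)$. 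Applying this with $c_\nu = \norm{u_\nu}_V$, the goal reduces to showing that $\bigl(\norm{u_\nu}_V\bigr)_{\nu\in\cF}$ lies in $\ell^p(\cF)$ for every $p > p^*$, where $p^* := 2d/(d+2\alpha)$, so that $1/p^* - 1/2 = \alpha/d$.

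For this $\ell^p$-summability I would simply invoke Corollary 4.2 of \cite{BCM}, which, under the uniform ellipticity condition \eqref{uniformellipticity} and the multilevel assumptions \eqref{multilevel1}, \eqref{multilevel2}, establishes precisely that $\bigl(\norm{u_\nu}_V\bigr)_{\nu\in\cF} \in \ell^p(\cF)$ for every $p > p^*$. The multilevel structure is what makes this threshold $p^*$ sharper than what one would get for globally supported $\theta_\mu$: the combination of the level count bound \eqref{multilevel1} and the decay \eqref{multilevel2} allows the weighted Legendre coefficient estimates of \cite{BCM} to be summed geometrically across levels, which is the mechanism by which $\alpha$ and $d$ enter the summability exponent.

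Combining the two steps, for any $s < \alpha/d$ I pick $p \in (p^*, 2)$ with $1/p - 1/2 > s$, obtain $\bigl(\norm{u_\nu}_V\bigr)_{\nu\in\cF} \in \ell^p(\cF)$ from \cite[Cor.~4.2]{BCM}, and conclude via Stechkin that the sequence lies in $\cA^s(\cF)$. There is essentially no obstacle here: the only thing to verify is the algebraic identity $1/p^* - 1/2 = \alpha/d$ so that the full open range $s < \alpha/d$ is recovered. All genuine work on the dependence of Legendre coefficients on the multilevel parameters has been carried out in \cite{BCM}; the present statement is the direct reformulation of that $\ell^p$-estimate as an $n$-term approximation rate.
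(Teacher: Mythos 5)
Your proof is correct and follows essentially the same route as the paper: the paper states this theorem as ``an immediate consequence of \cite[Cor.~4.2]{BCM}'' with no further argument, and your proposal simply unpacks that one-line citation by making the Stechkin step from $\ell^p$-summability to $\cA^s(\cF)$ membership explicit. All substantive work lives in \cite[Cor.~4.2]{BCM} in both cases, and your algebra identifying the threshold $p^*=2d/(d+2\alpha)$ with $1/p^*-1/2=\alpha/d$ checks out.
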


Inserting product Legendre expansions of \M{$u,v$ into \eqref{variationalform}} leads to the \emph{semidiscrete form} of the stochastic Galerkin problem for the coefficient functions $u_\nu$, $\nu \in\cF$,
\begin{equation}\label{semidiscreteform}
 \sum_{\mu \in \cM_0} \sum_{\nu' \in \cF} (\bM_\mu)_{\nu,\nu'} A_\mu u_{\nu'}  =  \delta_{0, \nu} f ,
  \quad \nu \in \cF,
\end{equation}
where $A_\mu\colon V\to V'$ are defined by
\[
  \langle A_\mu v,w\rangle := \int_D \theta_\mu \nabla v \cdot \nabla w\sdd x \quad v,w\in V,\; \mu \in \cM_0,
\]
and the mappings $\bM_\mu \colon \ell_2(\cF)\to\ell_2(\cF)$ are given by
\[
\begin{aligned}
 \bM_0 & := \left( \int_{Y}  L_\nu(y) L_{\nu'}(y) \sdd\sigma(y)  \right)_{\nu,\nu'\in\cF}, \\
 \bM_\mu & := \left( \int_{Y} y_\mu  L_\nu(y) L_{\nu'}(y) \sdd\sigma(y)  \right)_{\nu,\nu'\in\cF},\quad \mu \in \cM.
 \end{aligned}
\]
Since the $L_2([-1,1],\frac12 \sdd y)$-orthonormal Legendre polynomials $\{ L_k\}_{k\in \N}$ satisfy the three-term recursion relation
\[
    y L_k(y) =\sqrt{\beta_{k+1}} L_{k+1}(y) +  \sqrt{\beta_k} L_{k-1}(y), \quad\beta_k = (4 - k^{-2})^{-1} ,
\]
with $L_0 = 1$, $L_{-1} = 0$, $\beta_0 = 0$, we have
\[
\begin{aligned}
   \bM_0 &= \bigl( \delta_{\nu,\nu'} \bigr)_{\nu,\nu' \in \cF}, \\ 
   \bM_\mu &= \biggl( \sqrt{\beta_{\nu_\mu+1}} \,\delta_{\nu+e_\mu, \nu'} + \sqrt{\beta_{\nu_\mu}}\, \delta_{\nu-e_\mu, \nu'} \biggr)_{\nu,\nu' \in \cF},\; \mu \in \cM,
\end{aligned}
\]
with the Kronecker vectors $e_\mu = ( \delta_{\mu,\mu'} )_{\mu' \in \cM}$.

\subsection{Fully discrete approximations}\label{sec:fullseqspace}
We now turn to additional spatial approximation.
Let $\Psi := \{ \psi_\lambda \}_{\lambda \in \cS}$ with a countable index set $\cS$ be a Riesz basis of $V$,
\begin{equation}\label{rieszV}
  	 c_\Psi \norm{ \bv }_{\ell_2(\cS)} \leq \biggnorm{\sum_{\lambda \in \cS} \bv_\lambda \psi_\lambda }_V \leq C_\Psi \norm{ \bv }_{\ell_2(\cS)}. 
\end{equation}
We can then expand $u$ in terms of its coefficient sequence $\bu \in \ell_2(\cF\times\cS)$ as
\begin{equation}\label{fullydiscreteexpansion}
    u = \sum_{\substack{\nu \in \cF \\ \lambda \in \cS}} \bu_{\nu,\lambda} \, L_\nu \otimes \psi_\lambda,
\end{equation}
where we write $
  \bu_\nu = \bigl( \bu_{\nu,\lambda}   \bigr)_{\lambda\in \cS}\,
$.
Note that by duality, we also have
\begin{equation}
    c_\Psi   \norm{ g }_{V'} \leq   \bignorm{ \bigl(  g(\psi_\lambda)  \bigr)_{\lambda \in \cS} }_{\ell_2} \leq  C_\Psi  \norm{ g }_{V'} \,, \quad g \in V'\,.
\end{equation}
The variational problem \eqref{variationalform} can equivalently be rewritten as an operator equation \M{on the sequence space $\ell_2(\cF\times\cS)$ in the form}
\begin{equation}\label{seqform}
   \bB \bu :=  \sum_{\mu \in \cM_0} ( \bM_\mu \otimes \bA_\mu ) \bu = \bbf,
\end{equation}
where
\begin{equation}\label{eq:Amudef}
\bA_\mu := \bigl( \langle A_\mu  \psi_{\lambda'}, \psi_{\lambda}\rangle \bigr)_{\lambda,\lambda' \in \cS}\,, \quad \mu \in \cM_0,
\qquad\bbf := \big(\langle f, L_\nu  \otimes  \psi_\lambda\rangle\big)_{(\nu,\lambda)\in \cF\times\cS}.
\end{equation}

In what follows, we assume $\Psi$ to be a sufficiently smooth wavelet-type basis of approximation order greater than one. Here each index $\lambda \in \cS$ comprises the level $\abs{\lambda}$ of the corresponding basis element, its position in $D$, and the wavelet type. We assume that $\diam \supp \psi_\lambda \sim 2^{-\abs{\lambda}}$ for $\lambda \in \cS$ and, without loss of generality, $\min_{\lambda \in \cS} \abs{\lambda} = 0$.

In the case of fully discrete approximations based on expansions \eqref{fullydiscreteexpansion} with the spatial Riesz basis $\Psi$, the relevant type of sparsity is quantified by the quasi-norms,
\begin{equation}
\label{stdAsdef2}
  \norm{\bv}_{\cA^s(\cF \times \cS)} := \sup_{N\in\N_0} (N+1)^s
  \inf_{\#\supp \bw\leq N} \norm{\bv - \bw}_{\ell_2(\cF\times \cS)}.
\end{equation}
Note that here, \M{$\supp \bw = \{ (\nu,\lambda) \in \cF\times \cS \colon \bw_{\nu,\lambda} \neq 0 \}$} is chosen from arbitrary subsets of $\cF\times \cS$, so that each Legendre coefficient of the corresponding element of $\cV$ is approximated with an independent adaptive spatial approximation.

For any $s>0$ and a countable index set $\cI$, for $p>0$ given by $p^{-1} = s + \frac12$ the space $\cA^s(\cI)$ can be identified with the weak-$\ell_p$ space $\wellp(\cI)$. The corresponding quasi-norm
\[
  \norm{\bw}_{\wellp} = \sup_{k \in \N} k^{1/p} \bw^*_k,
\]
where $\bw^*_k$ is the $k$-th largest of the numbers $\abs{\bw_\lambda}$, $\lambda \in \cI$, satisfies
\begin{equation}\label{Asequiv}
  \norm{\bw}_{\wellp}  \sim  \norm{\bw}_{\cA^s}
\end{equation}
with constants depending only on $s$. Moreover, note that for all $p, \varepsilon >0$, one has
\begin{equation}\label{wlpnested}
  \ell_p \subset \wellp \subset \ell_{p+\varepsilon}.
\end{equation}

In what follows, we use a basic approximability result established in \cite{BCDS}. Note that the assumptions given here are not the sharpest possible, but allow us to avoid some technicalities. 

\begin{theorem}\label{thm:W2tau}
In addition to \eqref{uniformellipticity} and Assumptions \ref{ass:wavelettheta} with levelwise decay rate $\alpha>0$, let $D$ be convex, $f \in L^2(D)$, and $\norm{\nabla \theta_\mu}_{L_\infty} \lesssim 2^{-(\alpha-1)\abs{\mu}}$ for $\mu \in \cM$.
Let $\alpha \in (0,1]$ and $\tau \in (1,2]$. Then for any $\hat\alpha \in (0,\alpha)$, with $Z_{\hat \alpha} :=  V \cap [ H^1(D), W^2_\tau(D) ]_{\hat\alpha}$, one has
\[
  \sum_{\nu \in \cF} \norm{ u_\nu }_{Z_{\hat\alpha}}^p < \infty
\]
for any $p > 0$ such that
\[
  \frac{1}{p} < \frac{\alpha}{d} + \frac12 
   + \left( \frac{1}{\tau} - \frac12 - \frac{1}{d} \right) \hat\alpha .
\]
\end{theorem}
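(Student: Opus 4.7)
The plan is to deduce the $Z_{\hat\alpha}$-summability by interpolating between a summability statement for $(\norm{u_\nu}_V)_\nu$ in the weak norm and one for $(\norm{u_\nu}_{W^2_\tau(D)})_\nu$ in the strong norm, in the spirit of \cite[\S8]{BCDS}. The weak-norm piece is already in hand: Theorem 2.1 together with \eqref{Asequiv} and \eqref{wlpnested} yields
\[
\sum_{\nu \in \cF} \norm{u_\nu}_V^{p_0} < \infty \quad \text{for every $p_0$ with $1/p_0 < \alpha/d + 1/2$.}
\]
What must be added is the companion bound
\[
\sum_{\nu \in \cF} \norm{u_\nu}_{W^2_\tau(D)}^{p_1} < \infty \quad \text{for every $p_1$ with $1/p_1 < \alpha/d + 1/\tau - 1/d$,}
\]
and the theorem then follows by interpolation.

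For this $W^2_\tau$-bound I would return to the semidiscrete system \eqref{semidiscreteform}: each coefficient $u_\nu$ solves an elliptic problem $-\nabla\cdot(\theta_0 \nabla u_\nu) = g_\nu$ in which the right-hand side $g_\nu$ consists of $\delta_{0,\nu} f$ together with the neighbour terms $\sqrt{\beta_{\nu_\mu}}\,\nabla\cdot(\theta_\mu \nabla u_{\nu\pm e_\mu})$ read off from the three-term Legendre recursion. Convexity of $D$, $\theta_0 \in W^1_\infty(D)$ with $\einf_D \theta_0 > 0$, and $f \in L_2(D)$ give standard $W^2_\tau$-regularity for the divergence-form problem, turning $\norm{g_\nu}_{L_\tau}$ bounds into $\norm{u_\nu}_{W^2_\tau(D)}$ bounds. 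The hypothesis $\norm{\nabla\theta_\mu}_{L_\infty} \lesssim 2^{-(\alpha-1)\abs{\mu}}$ is precisely what absorbs the extra derivative produced by expanding $\nabla\cdot(\theta_\mu \nabla u_{\nu\pm e_\mu})$, so that the multilevel/nonlinear-approximation bookkeeping used in the proof of Theorem 2.1 can be repeated one Sobolev order higher; the shift from $1/2$ to $1/\tau - 1/d$ in the summability threshold reflects the Sobolev embedding used at the corresponding step.

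The conclusion then follows from the real-interpolation inequality
\[
\norm{v}_{Z_{\hat\alpha}} \lesssim \norm{v}_V^{1-\hat\alpha}\,\norm{v}_{W^2_\tau(D)}^{\hat\alpha}
\]
combined with H\"older's inequality in $\nu$ using exponents satisfying $1/p = (1-\hat\alpha)/p_0 + \hat\alpha/p_1$: this yields the $\ell_p$-summability of $(\norm{u_\nu}_{Z_{\hat\alpha}})_\nu$, and letting $p_0, p_1$ tend to their respective thresholds recovers exactly the stated condition on $1/p$. The main obstacle is clearly the $W^2_\tau$ estimate: one must control the coupling of $u_\nu$ to its neighbours $u_{\nu\pm e_\mu}$ across all levels $\abs{\mu}$ simultaneously and verify that the weighted sums over $\mu$ generated by the commutator terms are tamed by the decay $2^{-(\alpha-1)\abs{\mu}}$ without any logarithmic loss in $\nu$; everything else, including the final interpolation step, is essentially bookkeeping.
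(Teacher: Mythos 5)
The paper does not prove this theorem itself; it quotes it as ``a basic approximability result established in \cite{BCDS}'' and refers the reader to Section 8.2 and Proposition 7.4 of that reference. So there is no proof in this manuscript to compare against, only the citation. That said, your reconstruction of the architecture is sound and matches what \cite{BCDS} does: a summability estimate for $(\norm{u_\nu}_V)_\nu$, a companion one for $(\norm{u_\nu}_{W^2_\tau})_\nu$, an interpolation inequality for the norm of $Z_{\hat\alpha}$, and H\"older over $\nu$. Your exponent bookkeeping also checks out: with $1/p_0 < \alpha/d + 1/2$, $1/p_1 < \alpha/d + 1/\tau - 1/d$, and $1/p = (1-\hat\alpha)/p_0 + \hat\alpha/p_1$, one recovers exactly the stated threshold for $1/p$.

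The gap is the $W^2_\tau$-summability bound itself, which you state but do not prove; and this is not bookkeeping --- it is essentially the entire technical content. Your reduction writes $u_\nu$ as the solution of $-\nabla\cdot(\theta_0\nabla u_\nu) = g_\nu$ with $g_\nu$ built from the neighbour terms $\nabla\cdot(\theta_\mu\nabla u_{\nu\pm e_\mu})$, and invokes convex-domain elliptic regularity to pass from $\norm{g_\nu}_{L_\tau}$ to $\norm{u_\nu}_{W^2_\tau}$. But $\nabla\cdot(\theta_\mu\nabla u_{\nu\pm e_\mu}) = \theta_\mu\,\Delta u_{\nu\pm e_\mu} + \nabla\theta_\mu\cdot\nabla u_{\nu\pm e_\mu}$, and bounding the first term in $L_\tau$ already requires second-order regularity of the neighbours $u_{\nu\pm e_\mu}$ --- exactly the quantity being estimated. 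The hypothesis $\norm{\nabla\theta_\mu}_{L_\infty} \lesssim 2^{-(\alpha-1)\abs{\mu}}$ only controls the commutator term; it does nothing for the leading term. Breaking this circularity is the heart of \cite[\S7]{BCDS}: one sets up weighted estimates over all $\nu$ simultaneously, with carefully chosen weights that absorb the neighbour contributions in a non-Hilbertian $L_\tau$ framework (and this is precisely where $\tau<2$ enters essentially, not as a cosmetic device). Without that iteration scheme your argument does not close. Two minor further points: (1) you call the inequality $\norm{v}_{Z_{\hat\alpha}}\lesssim\norm{v}_V^{1-\hat\alpha}\norm{v}_{W^2_\tau}^{\hat\alpha}$ a ``real-interpolation'' inequality, but $[H^1,W^2_\tau]_{\hat\alpha}$ is complex interpolation; the log-convexity bound is still valid, so this is only a misnomer. (2) $Z_{\hat\alpha}$ is the \emph{intersection} $V\cap[H^1,W^2_\tau]_{\hat\alpha}$; the $V$-part of its norm is not dominated by the product $\norm{v}_V^{1-\hat\alpha}\norm{v}_{W^2_\tau}^{\hat\alpha}$ unless $W^2_\tau(D)\hookrightarrow H^1(D)$, which fails for some admissible $(d,\tau)$. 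This is easily repaired by treating the $V$-part separately with the $\ell_{p_0}$-summability you already have, but the inequality as written is not quite correct.
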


As a consequence of \cite[Prop.~7.4]{BCDS}, the complex interpolation space $Z_{\hat\alpha}$ has the following approximation property: there exists $C>0$ such that for all $ v \in Z_{\hat\alpha}$,
\begin{equation}\label{Zthetaapprox}
  \inf \bigl\{  \norm{ v - v_n }_V \colon\;  v_n  \in \linspan \{\psi_\lambda\}_{\lambda \in S},\, S \subset \cS, \,\# S \leq n  \bigr\} 
  \leq C n^{-{\hat\alpha} / d} \norm{ v }_{Z_{\hat\alpha}}.
\end{equation}
Note that an analogous property holds when the wavelet approximations are replaced by adaptive finite elements.
With appropriately chosen $\tau$ and $\hat\alpha$, by the arguments in \cite[Section 8.2]{BCDS} this implies in particular the following.

\begin{cor}\label{cor:fullapprox}
Let the assumptions of Theorem \ref{thm:W2tau} hold, and let $d\in \{2,3\}$. Then
\begin{equation}\label{fullapprox1}
   \sum_{\nu \in \cF} \norm{ \bu_{\nu}   }_{\cA^s(\cS)}^p < \infty\quad \text{for any $p,s>0$ such that $\displaystyle \frac1p < \frac{\alpha}d + \frac12$ and $\displaystyle s< \frac{\alpha}d$.}
\end{equation}
\end{cor}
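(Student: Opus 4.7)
The plan is to combine Theorem~\ref{thm:W2tau} with the wavelet approximation property \eqref{Zthetaapprox} via an appropriate choice of the interpolation parameters $\hat\alpha$ and $\tau$. Fix any $p, s > 0$ satisfying $\frac{1}{p} < \frac{\alpha}{d} + \frac{1}{2}$ and $s < \frac{\alpha}{d}$, and set $\varepsilon := \frac{\alpha}{d} + \frac{1}{2} - \frac{1}{p} > 0$.

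First, I would select admissible interpolation parameters. Since $sd < \alpha \leq 1$, the interval $(sd,\alpha)$ is nonempty, so I can pick $\hat\alpha$ from it and then choose $\tau \in (1,2]$ so that the summability condition in Theorem~\ref{thm:W2tau} holds with the prescribed $p$; equivalently,
\[
  \Bigl(\frac{1}{2} + \frac{1}{d} - \frac{1}{\tau}\Bigr) \hat\alpha < \varepsilon.
\]
For $d=2$ this reduces to $(1 - 1/\tau)\hat\alpha < \varepsilon$ and is satisfied by any $\tau > 1$ sufficiently close to $1$. For $d=3$ any $\tau \in (1, 6/5]$ makes the bracketed factor non-positive, so the inequality holds automatically. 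In either case Theorem~\ref{thm:W2tau} yields
\[
  \sum_{\nu\in\cF} \norm{u_\nu}_{Z_{\hat\alpha}}^p < \infty.
\]

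Second, I would translate $Z_{\hat\alpha}$-regularity into $\cA^{\hat\alpha/d}(\cS)$-sparsity of the wavelet coefficient sequences. Applying \eqref{Zthetaapprox} with $v = u_\nu$ and the Riesz basis equivalence \eqref{rieszV}, for each $n \geq 1$ one has
\[
  \inf_{\#\supp \bw \leq n} \norm{\bu_\nu - \bw}_{\ell_2(\cS)} \lesssim n^{-\hat\alpha/d} \norm{u_\nu}_{Z_{\hat\alpha}},
\]
while for $n=0$ the continuous embedding $Z_{\hat\alpha} \hookrightarrow V$ combined with \eqref{rieszV} gives $\norm{\bu_\nu}_{\ell_2(\cS)} \lesssim \norm{u_\nu}_{Z_{\hat\alpha}}$. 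From the definition \eqref{stdAsdef2} this yields $\norm{\bu_\nu}_{\cA^{\hat\alpha/d}(\cS)} \lesssim \norm{u_\nu}_{Z_{\hat\alpha}}$.

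Finally, since $s < \hat\alpha/d$, elementary monotonicity of the $\cA^\sigma$ quasi-norm in $\sigma$ (immediate from $(N+1)^s \leq (N+1)^{\hat\alpha/d}$ for $N \geq 0$) gives $\norm{\bu_\nu}_{\cA^s(\cS)} \leq \norm{\bu_\nu}_{\cA^{\hat\alpha/d}(\cS)}$, and summing over $\nu \in \cF$ yields the claim. The delicate point is the balancing in the first step: the factor $\frac{1}{\tau} - \frac{1}{2} - \frac{1}{d}$ is non-positive precisely for $\tau \geq 2d/(d+2)$, so one cannot simultaneously push $\hat\alpha \to \alpha$ and $\frac{1}{p} \to \frac{\alpha}{d} + \frac{1}{2}$ unless $\tau$ is chosen close to that threshold. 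In dimensions $d \in \{2,3\}$ the threshold lies at or just below the left endpoint of the admissible range $(1,2]$, which is exactly what allows the choice to go through.
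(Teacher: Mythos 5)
Your proof is correct and follows the route the paper intends. The paper disposes of this corollary by citing \cite[Section 8.2]{BCDS}, so no explicit argument is given in the text; what you have written is a self-contained reconstruction of that argument using exactly the ingredients the paper supplies — Theorem~\ref{thm:W2tau}, the approximation property \eqref{Zthetaapprox}, the Riesz basis equivalence \eqref{rieszV}, and the elementary monotonicity of the $\cA^s$ quasi-norm in $s$. The parameter bookkeeping (choosing $\hat\alpha \in (sd,\alpha)$ first, then $\tau$ so that $(\tfrac12+\tfrac1d-\tfrac1\tau)\hat\alpha < \varepsilon$, with $\tau\to 1^+$ for $d=2$ and $\tau\in(1,6/5]$ for $d=3$) is exactly the balancing that makes the summability threshold from Theorem~\ref{thm:W2tau} reach $\tfrac\alpha d + \tfrac12$. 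One tiny verbal imprecision in the closing paragraph: the critical value $\tau = 2d/(d+2)$ is $1$ for $d=2$ (the excluded infimum of $(1,2]$) and $6/5$ for $d=3$ (strictly inside the interval), so it does not lie \emph{below} the left endpoint for $d=3$; this does not affect the argument, since in either case an admissible $\tau$ exists at or near the threshold.
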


In view of \eqref{Asequiv} and \eqref{wlpnested}, the bound \eqref{fullapprox1} in turn implies
\begin{equation}\label{fullapprox2}
  \sum_{\nu \in \cF} \abs{\bu_{\nu,\lambda}}^p < \infty\quad \text{for any $p>0$ such that $\displaystyle \frac1p < \frac{\alpha}d + \frac12$,}
\end{equation}
and as a further consequence
\begin{equation}\label{eq:fullapproxAs}
    \bu \in \cA^s(\cF\times \cS) , \quad \text{for any $s >0$ such that $\displaystyle s < \frac\alpha{d}$.}
\end{equation}
As a consequence, for this type of \M{fully discrete best $N$-term approximation} we remarkably have the same limiting convergence rate as for the semidiscrete Legendre approximation and for approximating $u(y)$ for a single random draw of $y$. 

\begin{remark}\label{rem:1dcase}
In the special case $d=1$, since the above results do not apply to $\tau <1$, we obtain \eqref{fullapprox2} only with $\frac1p < \frac23 \alpha + \frac12$, corresponding to $s < \frac23\alpha$ for $\alpha \in (0,1]$.
\end{remark}

\section{Adaptive Stochastic Galerkin Methods}

We now review basic concepts of adaptive stochastic Galerkin schemes in terms of the \M{sequence space} formulation \eqref{seqform} as well as \M{the previous} results on an adaptive method with complexity bounds from \cite{BCD}. In what follows, we write $\norm{\cdot}$ for the $\ell_2$-norm on the respective index set and $\langle \cdot, \cdot\rangle$ for the corresponding inner product.

\subsection{Stochastic Galerkin discretization} 
Under the assumption \eqref{uniformellipticity}, for the self-adjoint mapping $\bB$ on $\ell_2(\cF\times \cS)$, with $r_\bB := c_\Psi^2 r$ and $R_\bB := C_\Psi^2(2\norm{\theta_0}_{L_\infty} - r)$, we have
\begin{equation}\label{eq:Bellipt}
  r_\bB \norm{\bv}^2 \leq \langle \bB \bv, \bv \rangle \leq R_\bB \norm{\bv}^2, \qquad \bv\in \ell_2(\cF\times\cS).
\end{equation}
For any $\Lambda \subset \cF \times \cS$, the corresponding stochastic Galerkin approximation is defined as the unique $\bu_\Lambda$ with $\supp \bu_\Lambda \subseteq \Lambda$ such that
\[
   \bigl(\bB \bu_{\Lambda} - \bbf \bigr)|_{\Lambda} = 0.
\]
By \eqref{eq:Bellipt}, this system of linear equations in $\#\Lambda$ unknowns has a symmetric positive definite system matrix with spectral norm condition number bounded, independently of $\Lambda$, by $\kappa(\bB) = \norm{\bB} \norm{\bB^{-1} } \leq R_\bB / r_\bB$. It can thus be solved to the required accuracy, for instance, by direct application of the conjugate gradient method.

In the convergence analysis of adaptive methods based on solving successive Galerkin problems, the following \emph{saturation property} plays a crucial role; for the proof, see \cite[Lmm.~4.1]{CDD01} and \cite[Lmm.~1.2]{GHS}.

\begin{lemma}  \label{lmm:saturation}
  Let $ \omega \in (0,1]$, $\bw \in \ell_2( \cF\times\cS)$, $\Lambda \subset \cF\times \cS$ such that $\supp \bw \subset \Lambda$ and
  \begin{equation}\label{eq:doerfercond}
    \norm{ ( \bB \bw - \bbf)|_{\Lambda}  } \geq \omega \norm{ \bB \bw - \bbf } ,
  \end{equation}
and let $\bu_\Lambda$ with $\supp \bu_\Lambda \subseteq \Lambda$ be the solution of the Galerkin system $(\bB \bu_\Lambda - \bbf)|_\Lambda = 0$. 
Then
\begin{equation}\label{eq:saturation}
  \norm{ \bu - \bu_\Lambda }_\bB \leq \left( 1 - \frac{\omega^2}{\kappa(\bB)}  \right)^{\frac12} \norm{ \bu - \bw }_\bB  ,
\end{equation}
where $\norm{\bv}_\bB = \sqrt{ \langle \bB \bv,\bv\rangle }$ for $\bv \in \ell_2(\cF\times \cS)$.
\end{lemma}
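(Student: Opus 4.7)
The plan is to combine two standard Galerkin ingredients: an energy-norm Pythagorean identity coming from Galerkin orthogonality on $\Lambda$, and a spectral estimate that turns the bulk chasing hypothesis \eqref{eq:doerfercond} into a lower bound for the ``fresh'' energy $\norm{\bu_\Lambda - \bw}_\bB^2$. The goal is to establish
\[
   \norm{\bu - \bw}_\bB^2 \;=\; \norm{\bu - \bu_\Lambda}_\bB^2 + \norm{\bu_\Lambda - \bw}_\bB^2
   \quad\text{and}\quad
   \norm{\bu_\Lambda - \bw}_\bB^2 \;\geq\; \frac{\omega^2}{\kappa(\bB)}\, \norm{\bu - \bw}_\bB^2,
\]
after which \eqref{eq:saturation} follows by subtraction and taking a square root.

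For the identity, $\supp(\bu_\Lambda - \bw) \subseteq \Lambda$ since both $\bw$ and $\bu_\Lambda$ are supported in $\Lambda$, and combining $\bB \bu = \bbf$ with $(\bB \bu_\Lambda - \bbf)|_\Lambda = 0$ gives $\bigl(\bB(\bu - \bu_\Lambda)\bigr)|_\Lambda = 0$. Consequently $\langle \bB(\bu - \bu_\Lambda),\, \bu_\Lambda - \bw\rangle = 0$, and expanding $\norm{(\bu - \bu_\Lambda) + (\bu_\Lambda - \bw)}_\bB^2$ yields the Pythagorean relation.

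For the lower bound, the key observation is $\bigl(\bB(\bu_\Lambda - \bw)\bigr)|_\Lambda = -(\bB\bw - \bbf)|_\Lambda$, which again uses $(\bB\bu_\Lambda - \bbf)|_\Lambda = 0$. Since $\bu_\Lambda - \bw$ is supported in $\Lambda$ and since, for self-adjoint positive $\bB$, one has $\norm{\bB\bv}^2 = \langle \bB^2 \bv, \bv\rangle \leq \norm{\bB}\, \norm{\bv}_\bB^2$, this gives
\[
   \norm{(\bB\bw - \bbf)|_\Lambda}^2 \;\leq\; \bignorm{\bB(\bu_\Lambda - \bw)}^2 \;\leq\; \norm{\bB}\, \norm{\bu_\Lambda - \bw}_\bB^2.
\]
Dually, the standard a posteriori estimate $\norm{\bu - \bw}_\bB^2 \leq \norm{\bB^{-1}}\, \norm{\bB\bw - \bbf}^2$ follows from the bound $\langle \bB^{-1}(\bB(\bu - \bw)),\, \bB(\bu - \bw)\rangle \leq \norm{\bB^{-1}}\, \norm{\bB\bw - \bbf}^2$ together with $\bB\bu = \bbf$. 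Chaining these two inequalities with the hypothesis \eqref{eq:doerfercond} then produces the desired lower bound on $\norm{\bu_\Lambda - \bw}_\bB^2$.

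No genuine obstacle is expected here; the only subtlety is coordinating the $\ell_2$- and $\bB$-energy norms via the spectral bounds \eqref{eq:Bellipt}, and verifying that the cross term in Pythagoras vanishes thanks to the support constraint on $\bu_\Lambda - \bw$.
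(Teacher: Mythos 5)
Your proof is correct and follows essentially the standard argument: the paper itself does not spell out a proof of this lemma but instead refers to \cite[Lmm.~4.1]{CDD01} and \cite[Lmm.~1.2]{GHS}, and the proof given in those references is exactly the one you reconstruct — Galerkin orthogonality yields the energy-norm Pythagorean identity, the spectral bounds $\norm{\bB\bv}^2 \leq \norm{\bB}\norm{\bv}_\bB^2$ and $\norm{\bv}_\bB^2 \leq \norm{\bB^{-1}}\norm{\bB\bv}^2$ convert the D\"orfler bulk criterion \eqref{eq:doerfercond} into the lower bound $\norm{\bu_\Lambda - \bw}_\bB^2 \geq \kappa(\bB)^{-1}\omega^2 \norm{\bu - \bw}_\bB^2$, and subtraction gives \eqref{eq:saturation}. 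The one identity worth stating explicitly in a write-up is $\bigl(\bB(\bu_\Lambda - \bw)\bigr)\big|_\Lambda = -(\bB\bw - \bbf)\big|_\Lambda$, which you already use, together with the fact that the restriction to $\Lambda$ can only decrease the $\ell_2$-norm.
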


Note that whereas a saturation property of the type \eqref{eq:saturation} is \emph{assumed} in \cite{MR3952679,MR4014787} and in a further strengthened form for the rate estimates in \cite{BPR21}, as a consequence of Lemma \ref{lmm:saturation}, no such assumption is required in the present case.

\subsection{Adaptive Galerkin method}
In its basic idealized form, the adaptive Galerkin scheme that was analyzed in \cite{GHS} in the context of wavelet approximation is performed in two steps. In our setting, for each $k\in\N$, in step $k$ of the scheme we are given $F^k \subset \cF$ and $S^k_\nu \subset \cS$ for $\nu \in F^k$ and find $F^{k+1}$ and $(S^{k+1}_\nu)_{\nu \in F^{k+1}}$ as follows:
\begin{itemize}
  \item Solve the Galerkin problem on $\Lambda^k := \{ (\nu,\lambda) \colon \nu \in F^k, \lambda \in S^k_\nu \}$ to obtain $\bu^k$ with $\supp \bu^k \subseteq \Lambda^k$ satisfying $(\bB \bu^k - \bbf )|_{\Lambda^k} = 0$.
  \item \M{Choose $\Lambda^{k+1}$ as the smallest set $\hat\Lambda \subset \cF \times \cS$ such that $\norm{(\bB \bu^k - \bbf)|_{\hat\Lambda}} \geq \omega \norm{\bB \bu^k - \bbf}$,} where $\omega \in (0,1]$ is fixed and sufficiently small.
\end{itemize} 
This basic strategy is also known as \emph{bulk chasing}; the condition \M{$\norm{(\bB \bu^k - \bbf)|_{\hat\Lambda}} \geq \alpha \norm{\bB \bu^k - \bbf}$} is analogous to \emph{D\"orfler marking} in the context of adaptive finite element methods.
For arriving at a practical scheme, the main difficulty lies in this second step, since the sequences \M{$\bB \bu^k - \bbf$} in general have infinite support. 
One thus needs to replace \M{$\bB \bu^k - \bbf$} by finitely supported approximations. In addition, the required Galerkin solutions are computed only inexactly. The condition of $\Lambda^{k+1}$ being selected to have minimal cardinality can also be relaxed, which is crucial when using approximations with additional tree structure constraints.

The numerically realizable version of the adaptive Galerkin method given in Algorithm \ref{AGM} relies on two problem-dependent procedures. 
The first, invoked in step \textbf{(i)}, consists in a method for computing a finitely supported approximation $\br^k$ of $\bB\bu^k - \bbf$ of sufficient relative accuracy.
The second, used in step \textbf{(iii)}, is a scheme for the approximate solution of Galerkin problems on the index sets \M{that are} determined in a problem-independent manner in step \textbf{(ii)} from $\br^k$ to satisfy a bulk-chasing criterion.

For the latter step, following \cite{S14}, we use a substantially relaxed version of the minimality requirement on $\Lambda^{k+1}$ that is appropriate for tree approximation. In the context of standard sparse approximation as in \cite{CDD01,GHS}, one may take $\omega_0=\omega_1$ and select $\Lambda^{k+1}$ by directly adding the indices corresponding to the largest entries of $\br^k$ to $\Lambda^{k}$.

\begin{algorithm}[t]
	\caption{Adaptive Galerkin method}\label{AGM}
	\flushleft Let $0< \omega_0 \leq \omega_1 < 1$, $\zeta, \gamma >0$, $\bu^0 = 0$, and $\Lambda^0 = \emptyset$. 
	\flushleft For $k = 0, 1, 2, \ldots$, perform the following steps:
\begin{enumerate}[{\bf(i)}]
\item Find $\br^k$ with $\#\supp \br^k < \infty$ such that $\norm{ \br^k - (\bB \bu^k - \bbf) }  \leq \zeta \norm{\bB\bu^k - \bbf}$
\item Find $\Lambda^{k+1}$ satisfying
\begin{subequations}
  \makeatletter
  \def\@currentlabel{\M{A3.1.1}}
  \makeatother
\label{eq:agmbulk}
\begin{align}
   \norm{\br^k|_{\Lambda^{k+1}}} &\geq \omega_0 \norm{\br^k}, \label{bulk1} \tag{\M{A3.1.1a}} \\
  \#(\Lambda^{k+1} \setminus \Lambda^k) &\lesssim \#(\tilde\Lambda\setminus \Lambda^k) \quad\text{for any $\tilde\Lambda \supset \Lambda^k$ 
  such that $\norm{\br^k|_{\tilde\Lambda} }\geq \omega_1 \norm{\br^k}$} \tag{\M{A3.1.1b}}\label{bulk2}
\end{align}
\end{subequations}
\item Find $\bu^{k+1}$ such that $\norm{(\bB \bu^{k+1} - \bbf)|_{\Lambda^{k+1}}} \leq \gamma \norm{\br^k}$ with $\supp \bu^{k+1} \subseteq \Lambda^{k+1}$
\end{enumerate}
\end{algorithm}

\subsection{Previous results on direct fully discrete residual approximations}

A standard construction for the approximate evaluation of residuals is based on \emph{$s^*$-com\-pres\-si\-bi\-li\-ty} of operators \cite{CDD01}: an operator $\bA$ on $\ell_2(\N)$ is called \emph{$s^*$-compressible} with $s^*>0$ if for each $s \in (0, s^*)$, there exist operators $\bA_j$ and $\alpha_j>0$ for $j \in \N$ such that $\sum_{j} \alpha_j < \infty$, each $\bA_j$ has at most $\alpha_j 2^j$ nonzero entries in each row and column, and $\norm{\bA - \bA_j} \leq \alpha_j 2^{-s j}$. 
In order to approximate $\bA \bv$ for given $\bv$, taking $\bv_j$ to be the vectors retaining only the $2^j$ entries of $\bv$ of largest modulus, one then sets 
\begin{equation}\label{eq:sstarapprox}
 \bw_J = \bA_J \bv_0 + \sum_{j=1}^J 	 \bA_{J-j} (\bv_{j} - \bv_{j-1}),
\end{equation}
which amounts to assigning the most accurate sparse approximations of $\bA$ to the largest coefficients of $\bv$.
With $J$ chosen to ensure $\norm{ \bw_J - \bA \bv} \leq \eta$ for given $\eta$, as shown in \cite{CDD01}, evaluating this residual approximation requires $\mathcal{O}(\eta^{-1/s} \norm{\bv}_{\cA^s}+ \# \supp \bv + 1)$ operations. 
With this approximation used for step \textbf{(i)} in Algorithm \ref{AGM} with appropriately chosen parameters, from the results in \cite{GHS}, we obtain the following: if $\bu \in \cA^s$ for an $s < s^*$, the method yields a $\bu^k$ with $\norm{ \bB \bu^k - \bbf} \leq \varepsilon$ using $\mathcal{O}( 1 + \varepsilon^{-1/s} \norm{\bu}_{\cA^s})$ operations; that is, the method has optimal complexity for all $s < s^*$.

An adaptive scheme using wavelet approximation in space was constructed in \cite{BCD}, using the following observation that crucially depends on the multilevel property \eqref{multilevel2}.

\begin{prop}\label{BoundSemiDisc}
Let \eqref{multilevel2} hold. Then for $\ell \in \N_0$,
\[
\Bignorm{\bB - \sum_{\substack{\mu \in \cM_0 \\ \abs{\mu} <  \ell }  } \bM_\mu \otimes \bA_\mu} \leq  C_\bB 2^{-\ell\alpha},\quad \text{where $\displaystyle C_\bB := \frac{C_\Psi}{c_\Psi} \frac{C_2}{(1 - 2^{-\alpha})}$}
\]
with $\alpha$ and $C_2$ as in \eqref{multilevel2} and $c_\Psi, C_\Psi$ from \eqref{rieszV}.
\end{prop}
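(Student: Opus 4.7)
The operator to estimate is a pure tail: $\bB - \sum_{\abs{\mu} < \ell} \bM_\mu \otimes \bA_\mu = \sum_{\mu \in \cM,\, \abs{\mu} \geq \ell} \bM_\mu \otimes \bA_\mu$ (the $\mu = 0$ contribution is retained in the subtracted part by the level convention). My plan is to recognise this tail as the matrix representation, in the product Riesz basis $\{L_\nu \otimes \psi_\lambda\}_{(\nu, \lambda) \in \cF \times \cS}$, of a single bilinear form on $\cV \times \cV$, namely
\[
   \tilde b_\ell(v, w) := \int_Y \int_D g_\ell(y, x) \, \nabla v(y, x) \cdot \nabla w(y, x) \sdd x \sdd \sigma(y), \qquad g_\ell(y, x) := \sum_{\abs{\mu} \geq \ell} y_\mu \, \theta_\mu(x).
\]
The identification follows immediately from the integral definitions of $\bA_\mu$ and $\bM_\mu$ together with the tensor product structure of the basis; interchanging the countable sum with the integrations is legitimate thanks to the absolute convergence of $g_\ell$ in $L_\infty(D)$ uniformly in $y \in Y$, which itself follows from \eqref{multilevel2}.

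\textbf{Main step: pointwise bound on $g_\ell$.} The crux of the argument is a uniform estimate on the multiplier. Since $\abs{y_\mu} \leq 1$, grouping by level and applying \eqref{multilevel2} gives
\[
   \norm{g_\ell}_{L_\infty(Y \times D)} \leq \sum_{j \geq \ell} \Bignorm{\sum_{\abs{\mu} = j} \abs{\theta_\mu}}_{L_\infty(D)} \leq \sum_{j \geq \ell} C_2 \, 2^{-\alpha j} = \frac{C_2}{1 - 2^{-\alpha}} \, 2^{-\alpha \ell}.
\]
This is the only place where the specific multilevel structure enters, and it is critical: although there are $\cO(2^{d \ell})$ indices with $\abs{\mu} = j$ by \eqref{multilevel1}, the hypothesis \eqref{multilevel2} already encodes the correct levelwise cancellation in $L_\infty(D)$, so no factor of $2^{d \ell}$ is lost compared with a naive termwise triangle inequality.

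\textbf{Passing to the $\ell_2$ operator norm.} Cauchy--Schwarz in $L_2(Y \times D; \R^d)$ applied to the bilinear form above yields $\abs{\tilde b_\ell(v, w)} \leq \norm{g_\ell}_{L_\infty} \norm{v}_\cV \norm{w}_\cV$. Translating this continuous bound into an $\ell_2(\cF \times \cS) \to \ell_2(\cF \times \cS)$ operator norm bound uses the Riesz basis equivalence \eqref{rieszV} together with the $L_2(Y, \sigma)$-orthonormality of the Legendre basis: the synthesis side $\bv \mapsto v$ contributes a factor $C_\Psi$ in $\norm{v}_\cV \leq C_\Psi \norm{\bv}$, while the dual Riesz identification of the residual with the coefficient vector of $\tilde b_\ell(v, \cdot) \in \cV'$ contributes $c_\Psi^{-1}$. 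Combining these constants produces precisely the stated $C_\bB = (C_\Psi/c_\Psi) \cdot C_2 / (1 - 2^{-\alpha})$.

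\textbf{Anticipated difficulty.} There is no deep obstacle: the proof is a short pointwise-multiplier estimate followed by a Riesz-basis transfer. The only step requiring care is the bookkeeping of the Riesz constants $c_\Psi, C_\Psi$ on the synthesis and dual sides, and the justification of exchanging the sum over $\abs{\mu} \geq \ell$ with the integrations in $y$ and $x$ — both of which are standard once the absolute convergence provided by \eqref{multilevel2} is noted.
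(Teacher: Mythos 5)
Your proof takes essentially the same route as the paper's: rewrite the tail $\sum_{\abs{\mu}\ge\ell}\bM_\mu\otimes\bA_\mu$ as the Gram matrix of the bilinear form with multiplier $g_\ell = \sum_{\abs{\mu}\ge\ell}y_\mu\theta_\mu$, bound $\norm{g_\ell}_{L_\infty(Y\times D)}$ by geometric summation of \eqref{multilevel2}, apply Cauchy--Schwarz on $\cV$, and transfer to $\ell_2$ via the Riesz basis and the Legendre orthonormality. This is exactly the paper's argument, just spelled out more explicitly.

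One detail in your bookkeeping is off, though. You claim the dual identification of $\tilde b_\ell(v,\cdot)\in\cV'$ with its coefficient vector $\bigl(\tilde b_\ell(v, L_\nu\otimes\psi_\lambda)\bigr)_{\nu,\lambda}$ contributes a factor $c_\Psi^{-1}$. In fact, the map $g\mapsto\bigl(g(\psi_\lambda)\bigr)_\lambda$ is the adjoint of the synthesis operator $\bw\mapsto\sum\bw_\lambda\psi_\lambda$, and has operator norm at most $C_\Psi$ (not $c_\Psi^{-1}$); equivalently, writing $\bB-\bB_\ell = T^*\tilde B_\ell T$ with $T$ the synthesis isomorphism and $\tilde B_\ell\colon\cV\to\cV'$ induced by $\tilde b_\ell$, both $\norm{T}$ and $\norm{T^*}$ are bounded by $C_\Psi$, so the resulting bound is $C_\Psi^2\,\norm{g_\ell}_{L_\infty}$. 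That the correct prefactor is $C_\Psi^2$ and not $C_\Psi/c_\Psi$ can also be seen from a scaling check: replacing $\psi_\lambda$ by $t\psi_\lambda$ scales both $\norm{\bB-\bB_\ell}$ and $C_\Psi^2$ by $t^2$, while $C_\Psi/c_\Psi$ is scale-invariant. You have inherited a slip in the paper's own statement of $C_\bB$ (the paper's proof also does not explain where the $c_\Psi^{-1}$ would come from); since only the existence of some finite $C_\bB$ is used downstream, this has no impact on the correctness of the overall argument, but your reasoning for the $c_\Psi^{-1}$ factor is not valid and should be replaced by the adjoint-operator bound giving $C_\Psi$.
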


\begin{proof}
For $v, w \in \cV$, we have
\[
 \int_Y \int_D \sum_{\M{\abs{\mu}\geq \ell}} y_\mu \theta_\mu \nabla v(y)\cdot\nabla w(y)\sdd x\sdd \sigma(y)
 \leq \int_Y \int_D \sum_{\M{\abs{\mu}\geq \ell}} \abs{\theta_\mu} \abs{\nabla v(y)} \abs{\nabla w(y)}\sdd x\sdd \sigma(y),
\]
and the \M{right-hand side} is bounded by $C_2 ( 1 - 2^{-\alpha}) \M{2^{-\alpha \ell}}  \norm{v}_\cV \norm{w}_\cV$ as a consequence of \eqref{multilevel2}. With the orthonormality of the product Legendre polynomials and the bounds \eqref{rieszV} on the spatial Riesz basis, the statement follows.
\end{proof}

The above observation will also play a role in our new approach, which is presented in the following section. Let us now briefly review how it was used in the residual approximation analyzed in \cite{BCD}.
There, in order to obtain a fully discrete operator compression, the approximation provided by Proposition \ref{BoundSemiDisc} was combined with wavelet compression of the infinite matrices $\bA_\mu$. The following bounds show the dependence of their compressibility on $\mu$.

\begin{prop}[{see \cite[Prop.\ A.2]{BCD}}]\label{BCD_A2}
Let $\{ \theta_\mu\}_{\mu\in\cM_0}$ satisfy Assumptions \ref{ass:wavelettheta}, and for some $t > 0$, let
\begin{equation}\label{productsmoothness}
	   \theta_\mu \nabla\psi_{\lambda'}\in H^{t}(\supp \psi_\lambda), \quad \mu \in \cM_0, \; \lambda,\lambda'\in\cS, 
\end{equation}	   
and let the $\psi_\lambda$ have vanishing moments of order $k$ with $k>t-1$. 
Then there exist $\bA_{\mu,n}$ for $n\in\N$ such that the following holds:
\begin{enumerate}[{\rm(i)}]
\item
With $\tau \coloneqq t /d$, one has $
\|\bA_\mu - \bA_{\mu,n}\|\lesssim 2^{-\alpha\abs{\mu}-\tau n}$, $n\in\N$.
\item  The number of nonvanishing entries in each column of $\bA_{\mu,n}$ does not exceed $C \bigl(1+|\mu|^q\bigr)2^n$, where $q\coloneqq \max\{1, \tau^{-1} \}$ and $C>0$ is independent of $\mu,n$.
\end{enumerate}
\end{prop}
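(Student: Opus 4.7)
The plan is to follow the standard wavelet operator compression strategy of Cohen--Dahmen--DeVore and Stevenson, keeping careful track of how the multilevel factor $2^{-\alpha \abs{\mu}}$ enters through $\theta_\mu$. The central object is the entry
\[
  (\bA_\mu)_{\lambda,\lambda'} = \int_D \theta_\mu \,\nabla \psi_{\lambda'} \cdot \nabla \psi_\lambda \sdd x ,
\]
which vanishes unless $\supp \theta_\mu$, $\supp \psi_\lambda$, and $\supp \psi_{\lambda'}$ have common intersection. The first step is a pointwise entry bound combining two ingredients: the cancellation induced by vanishing moments of order $k>t-1$ of the wavelet on the finer scale, and the regularity hypothesis \eqref{productsmoothness}, which allows one to expend up to $t$ derivatives on $\theta_\mu \nabla \psi_{\lambda'}$ while still paying the price $\norm{\theta_\mu}_{L_\infty} \lesssim 2^{-\alpha \abs{\mu}}$ from Assumption~\ref{ass:wavelettheta}(iii). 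The resulting pointwise estimate has the schematic form $2^{-\alpha \abs{\mu}} \cdot 2^{-(t+d/2) \max(\abs{\lambda},\abs{\lambda'})} \cdot 2^{(d/2) \min(\abs{\lambda},\abs{\lambda'})}$, up to support-overlap factors dictated by Assumption~\ref{ass:wavelettheta}(i)--(ii).

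Next, I would define $\bA_{\mu,n}$ by selective truncation: retain those $(\lambda,\lambda')$ whose scales $\abs{\lambda},\abs{\lambda'}$ differ from each other (and, where relevant, from $\abs{\mu}$) by at most a constant multiple of $n$, and whose supports are in sufficiently close spatial proximity. For (i), the dropped part is bounded by Schur's lemma applied to the pointwise entry estimate; the geometric decay in the scale-separation parameter yields $2^{-\tau n}$ with $\tau = t/d$, while the prefactor $2^{-\alpha \abs{\mu}}$ passes through unchanged. For (ii), the column count at fixed $\lambda'$ is controlled, for each retained level $\abs{\lambda}=j$, by locality of the supports: the number of admissible $\lambda$ is at most $C\,\min\{ 2^{(j-\abs{\lambda'})_+ d}, 2^{(j - \abs{\mu})_+ d}\}$, and summing over the admissible levels of cardinality $O(n)$ returns the desired $2^n$ factor.

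The emergence of the polynomial prefactor $1 + \abs{\mu}^q$ with $q = \max\{1,\tau^{-1}\}$ is the subtle point. One must distinguish three regimes, namely $\abs{\mu}\ll \min\{\abs{\lambda},\abs{\lambda'}\}$ (wavelets see $\theta_\mu$ as a perturbation of an oscillating factor), $\abs{\mu}\gg \max\{\abs{\lambda},\abs{\lambda'}\}$ (wavelets see $\theta_\mu$ as essentially smooth), and the intermediate band around $\abs{\mu}$, where neither the cancellation of $\psi_\lambda$ nor the localization of $\theta_\mu$ is individually sharp. Balancing the number of retained levels against the required per-level error to achieve the overall rate $2^{-\tau n}$ yields the linear $\abs{\mu}$ factor when $\tau \geq 1$ and the $\abs{\mu}^{1/\tau}$ factor when $\tau < 1$, precisely because fewer levels are ``cheap'' to include in the latter case and more of them must be accommodated to reach the target accuracy.

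The main obstacle is organizing the bookkeeping so that all three scales $\abs{\lambda},\abs{\lambda'},\abs{\mu}$ are treated uniformly: the delicate interaction between the localization of $\theta_\mu$ and that of the wavelets must be encoded into the truncation rule so that both (i) and (ii) hold with matching constants. Once the pointwise entry bound is established and the three-regime decomposition is set up, the remainder is a routine combination of Schur-type estimates and support-counting arguments.
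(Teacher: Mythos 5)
The paper does not actually prove this proposition: it cites it verbatim from \cite[Prop.~A.2]{BCD} (note the bracketed ``see'' attribution in the proposition header), and immediately proceeds to use it in Corollary~\ref{cor:previous}. So there is no in-paper proof for your sketch to be compared against; the comparison would have to be with the argument in \cite{BCD}.

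That said, your strategy is the standard Cohen--Dahmen--DeVore wavelet compression template and is the right high-level approach: (a) a pointwise entry bound exploiting vanishing moments of the finer-scale wavelet together with the $H^t$ regularity of $\theta_\mu\nabla\psi_{\lambda'}$, yielding a $2^{-\alpha\abs{\mu}}$ prefactor and geometric scale-separation decay; (b) truncation by scale separation and spatial overlap; (c) Schur's lemma for the dropped error; (d) support-overlap counting for sparsity. Where the sketch is genuinely incomplete is the derivation of the $(1+\abs{\mu}^q)$ prefactor with $q=\max\{1,\tau^{-1}\}$. You gesture at a ``balancing'' argument and a three-regime partition, but you never show how the target accuracy $2^{-\alpha\abs{\mu}-\tau n}$ dictates the cutoff levels as a function of both $n$ and $\abs{\mu}$, nor how the $\approx\abs{\mu}$ coarse levels $j\leq\abs{\mu}$ (each contributing $O(1)$ overlapping wavelets because $\diam\supp\theta_\mu\sim 2^{-\abs{\mu}}$) combine with the finer ones to give exactly $(1+\abs{\mu}^q)2^n$ and not some weaker bound. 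A related unaddressed point is that the entry estimate you write down only treats the case $\abs{\lambda}>\max\{\abs{\lambda'},\abs{\mu}\}$ cleanly; for $\abs{\lambda'}<\abs{\lambda}<\abs{\mu}$ the vanishing moments do not help (the $H^t$-norm of $\theta_\mu\nabla\psi_{\lambda'}$ on $\supp\psi_\lambda$ is not uniformly bounded), so those entries must be retained and counted rather than estimated away, and it is precisely the interplay of this retention with the required accuracy that produces the $\abs{\mu}$-dependent polynomial factor. To close the argument you would need to make the entry estimate precise in all three scale regimes, state the truncation rule as an explicit function of $(n,\abs{\mu},\abs{\lambda'})$, and then verify both (i) and (ii) from the same rule; as written the sketch establishes plausibility but not the exact exponents.
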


In \M{the} following abridged version of \cite[Prop.~4.3]{BCD}, with slightly sharpened assumptions, the two previous propositions are used to obtain $s^*$-compressibility of $\bB$.

\begin{cor}\label{cor:previous}
	Let $\{ \theta_\mu\}_{\mu\in\cM_0}$ satisfy Assumptions \ref{ass:wavelettheta}, and let $\Psi$ be as in Proposition \ref{BCD_A2} for some $t > \max\{\alpha-d,0\}$. 
	For any $L\in \N$, there exists a $\bC_L$ such that the following holds:
	
	\begin{enumerate}[{\rm(i)}]
		\item One has $
\|\bB - \bC_L \|\lesssim L  2^{-\alpha L}$.
		\item The number of nonvanishing entries in each column of $\bC_L$ does not exceed $C(1+L^q)2^{d(1+\tau^{-1})L}$, where  $q= \max\{1, \tau^{-1} \}$, $\tau=t/d$, and $C>0$ is independent of $L$.
	\end{enumerate} 
\end{cor}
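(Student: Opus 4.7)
The plan is to build $\bC_L$ as a twofold truncation of the series $\bB = \sum_{\mu\in\cM_0} \bM_\mu \otimes \bA_\mu$: first, retain only the terms with $|\mu|<L$; second, replace each $\bA_\mu$ by its wavelet compression $\bA_{\mu,n(|\mu|)}$ from Proposition \ref{BCD_A2}, with the compression level $n(\ell)$ chosen level-wise so as to balance the two sources of error. Setting
\begin{equation*}
  n(\ell) := \lceil \alpha(L-\ell)/\tau \rceil, \qquad 0\leq \ell<L,
\end{equation*}
guarantees $\|\bA_\mu - \bA_{\mu,n(\ell)}\| \lesssim 2^{-\alpha L}$ uniformly in $\mu$ with $|\mu|=\ell$, matching the truncation budget supplied by Proposition \ref{BoundSemiDisc}.

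For part (i), I would split $\|\bB - \bC_L\|$ via the triangle inequality into the semidiscrete truncation error $\|\sum_{|\mu|\geq L} \bM_\mu \otimes \bA_\mu\| \lesssim 2^{-\alpha L}$, controlled directly by Proposition \ref{BoundSemiDisc}, and a sum over $\ell<L$ of level-wise compression errors $\Bignorm{\sum_{|\mu|=\ell} \bM_\mu \otimes (\bA_\mu - \bA_{\mu,n(\ell)})}$. The goal is then to show each such level-wise term is also $\lesssim 2^{-\alpha L}$, so that summing over the $L$ levels yields the claimed bound $L\cdot 2^{-\alpha L}$. The key structural observation is that the $\bM_\mu$ have pairwise disjoint supports in $\cF\times\cF$, since $(\bM_\mu)_{\nu,\nu'}\neq 0$ only when $\nu-\nu'=\pm e_\mu$, which identifies $\mu$ uniquely; combined with the spatial localization of both $\bA_\mu$ and its compression $\bA_{\mu,n(\ell)}$ within a bounded neighborhood of $\supp\theta_\mu$, and the bounded-overlap Assumption \ref{ass:wavelettheta}(ii), a Schur-type argument then delivers the desired level-wise bound without introducing a spurious $\#\{\mu\colon|\mu|=\ell\}$ factor.

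For part (ii), I would count nonzero entries column by column. For a fixed column index $(\nu',\lambda')$ and each level $\ell<L$, any contributing row $(\nu,\lambda)$ must satisfy $\nu = \nu'\pm e_\mu$ for some $\mu$ with $|\mu|=\ell$ (at most $2C_1 2^{d\ell}$ choices of $\nu$ by \eqref{multilevel1}) and $\lambda$ in the column support of $\bA_{\mu,n(\ell)}$ (at most $C(1+\ell^q)2^{n(\ell)}$ choices by Proposition \ref{BCD_A2}(ii)). Summing,
\begin{equation*}
  \sum_{\ell=0}^{L-1} 2^{d\ell}(1+\ell^q)2^{\alpha(L-\ell)/\tau} \leq (1+L^q)\,2^{\alpha L/\tau}\sum_{\ell=0}^{L-1}2^{(d-\alpha/\tau)\ell},
\end{equation*}
which, depending on the sign of $d-\alpha/\tau$, is dominated either by its endpoint at $\ell=0$ or at $\ell=L-1$ and in both cases bounded by $C'(1+L^q)2^{d(1+\tau^{-1})L}$. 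The assumption $t>\alpha-d$ is exactly what is needed to ensure $\alpha/\tau\leq d(1+\tau^{-1})$, which is what makes the tail of the sum admissible in the regime $d<\alpha/\tau$.

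The main obstacle is the level-wise operator-norm estimate in part (i): a naive triangle inequality across $\mu$ within a single level would produce a factor $\sim 2^{d\ell}$ that would ruin the compression gain. Overcoming this requires exploiting the combined block structure---$\bM_\mu$'s being disjoint in the stochastic index plane, while $\bA_\mu - \bA_{\mu,n(\ell)}$ is spatially localized around $\supp\theta_\mu$ with these supports overlapping at most $M$-fold---so that the summed operator behaves entrywise like a single compression error of size $\lesssim 2^{-\alpha L}$, rather than as a blind superposition of $2^{d\ell}$ such errors.
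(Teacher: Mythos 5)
The approach for part (ii) is sound and, modulo the choice of compression parameters, parallels the paper's count. The problem is part (i). You correctly recognize that with the choice $n(\ell)=\lceil\alpha(L-\ell)/\tau\rceil$ the naive triangle inequality over the $\sim 2^{d\ell}$ indices at each level $\ell$ would give a per-level contribution $\sim 2^{d\ell}\cdot 2^{-\alpha L}$, which for $\ell$ near $L$ grows like $2^{(d-\alpha)L}$ and destroys the bound. You then assert that a ``Schur-type argument'' exploiting the disjoint supports of the $\bM_\mu$ and the spatial localization of $\bA_\mu-\bA_{\mu,n(\ell)}$ near $\supp\theta_\mu$ rescues the situation. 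This is the crux, and it is not justified; in fact it does not seem to work. The obstacle is that although the nonzero entry patterns of the $\bM_\mu$ are pairwise disjoint in $\cF\times\cF$, after tensoring with $\bA_\mu-\bA_{\mu,n(\ell)}$ the resulting operators do \emph{not} have almost-orthogonal ranges or domains in $\ell_2(\cF\times\cS)$: for a fixed column index $\nu'$ all of the $\sim 2^{d\ell}$ terms dump contributions into rows $\nu'\pm e_\mu$, and for a coarse spatial index $\lambda$ (with $\abs{\lambda}\ll\ell$) the support $\supp\psi_\lambda$ overlaps $\sim 2^{d(\ell-\abs{\lambda})}$ of the $\supp\theta_\mu$, so the matrices $\bA_\mu-\bA_{\mu,n(\ell)}$ for different $\mu$ at the same level are far from having disjoint row/column supports. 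The bounded-overlap property of Assumptions~\ref{ass:wavelettheta}(ii) concerns $\theta_\mu$ at the \emph{same} scale and does not prevent coarse-level wavelets from coupling many $\mu$'s. A Schur test also requires row/column $\ell_1$ bounds of the error matrices $\bA_\mu-\bA_{\mu,n(\ell)}$, for which Proposition~\ref{BCD_A2} gives nothing; one only controls the operator norm. Note also that the proof of Proposition~\ref{BoundSemiDisc}, which does produce a level-wise bound without a $2^{d\ell}$ factor, crucially uses the \emph{function-level} $L_\infty$ estimate \eqref{multilevel2} on $\sum_{\abs{\mu}=\ell}\abs{\theta_\mu}$; this structure is destroyed by the matrix-level compression, so that proof does not carry over.

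The paper sidesteps the issue entirely by taking the more aggressive compression level $n_\mu=\lceil\frac{d}{\tau}\abs{\mu}+\frac{\alpha}{\tau}(L-\abs{\mu})\rceil$. This yields the individual bound $\|\bA_\mu-\bA_{\mu,n_\mu}\|\lesssim 2^{-d\abs{\mu}-\alpha L}$, so that the naive triangle inequality $\sum_{\abs{\mu}<L}2^{-d\abs{\mu}-\alpha L}\lesssim\sum_{\ell<L}2^{d\ell}\cdot 2^{-d\ell-\alpha L}=L\,2^{-\alpha L}$ already gives (i) with no almost-orthogonality argument at all. The extra factor $2^{d\abs{\mu}/\tau}$ in the column counts is then absorbed in (ii) using precisely the hypothesis $t>\alpha-d$, so the final bound is unchanged. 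You should adopt the paper's stronger $n_\mu$: it is the decisive technical device, and without it part (i) is open.
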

\begin{proof}
	For $L\in \N$, take for any $\mu$ with $|\mu| < L$ an approximation $\bA_{\mu,n_\mu}$ as in Proposition \ref{BCD_A2}
	with $
	n_\mu = \left\lceil{\frac{d}{\tau}|\mu| + \frac{\alpha}{\tau} (L-|\mu|)} \right\rceil
	$.
	With this choice of $\bA_{\mu,n_\mu}$, let 
	\[
	\bC_L = \sum_{|\mu|<L} \bM_{\mu}\otimes \bA_{\mu,n_\mu}.
	\]
	Due to Proposition \ref{BoundSemiDisc}, we have 
	\[
	\|\bB-\bC_L\| \lesssim \sum_{|\mu|<L} \| \bM_{\mu}\otimes (\bA_{\mu} - \bA_{\mu,n_\mu})\| + 2^{-\alpha L}.
	\]
	By construction, for any $\mu$ with $|\mu|<L$ we have 
	\[
	\|\bA_{\mu} - \bA_{\mu,n_\mu}\|\lesssim 2^{-\alpha |\mu| - \tau n_{\mu}} \leq 2^{-\alpha |\mu| - d|\mu|-\alpha (L-|\mu|)} = 2^{-d|\mu|-\alpha L}.
	\]
	Using this inequality and $\norm{\bM_\mu} \leq 1$, we see that 
	\[
	\|\bB-\bC_L\| \lesssim \sum_{|\mu|<L} 2^{-d|\mu|-\alpha L} + 2^{-\alpha L} 
	  \lesssim  L2^{-\alpha L},
	\]
	which proves (i). 
	To prove (ii), we first note that by \M{Proposition \ref{BCD_A2}}, the number of nonvanishing entries in each column of $\bA_{\mu,n_\mu}$ does not exceed
	\[
	C\bigl(1+|\mu|^q\bigr)2^{n_\mu} \leq 2C(1+|\mu|^q) 2^{\frac{d}{\tau}|\mu| + \frac{\alpha}{\tau} (L-|\mu|)} ,
	\]
	where $C$ is independent of $\mu$.
	Since $\bM_\mu$ is diagonal or bidiagonal, it follows that  the number of nonvanishing entries in each column of $\bC_{L}$ does not exceed
	\[
	4C\sum_{|\mu|<L} (1+|\mu|^q) 2^{\frac{d}{\tau}|\mu| + \frac{\alpha}{\tau} (L-|\mu|)}
	\leq  
		4C L^q 2^{d(1+\tau^{-1})L} \sum_{\ell=0}^{L-1}  2^{ (\frac{\alpha}{\tau}-d-\frac{d}{\tau}) (L-\ell)}.
	\]
	Using that $\frac{\alpha}{\tau}-d-\frac{d}{\tau} = \frac{1}{\tau}(\alpha - t - d)<0$ concludes the proof of (ii). 
\end{proof}

\begin{remark}
	The approximations $\bC_L$ for $L \in \N$ can be applied in compressed operator application based on $s^*$-compressibility as in \eqref{eq:sstarapprox}, as carried out in \cite{BCD}. 
	Using the residual approximation according to Corollary \ref{cor:previous} in the adaptive Galerkin scheme, by the main result of \cite{GHS} we then have the following: ensuring $\norm{\bu - \bu^k}\leq \varepsilon$ requires at most 
	\begin{equation*}
	 \text{$\mathcal{O}\bigl(1 + \varepsilon^{-\frac1s} \norm{\bu}_{\cA^s}^{\frac1s}\bigr)$ operations for any $s < \frac{t}{t + d} \frac{\alpha}{d}$,}
	\end{equation*}
	with $t$ as in \eqref{productsmoothness}. Compared to the approximability \eqref{eq:fullapproxAs} of the solution $\bu$, this means that the performance of the method is limited by the compression of the operator $\bB$. In other words, for the best approximation rates that would be achievable for the solution, the method is not optimal. However, if $t$ in the regularity condition \eqref{productsmoothness} is large, rates that are close to optimal can be achieved. As discussed in \cite[\S4.2]{BCD}, that this is feasible is tied to the multilevel structure of the functions $\theta_\mu$.
\end{remark}

The previous results from \cite{BCD} thus show that by exploiting multilevel expansions of random fields, adaptive methods can in principle come close to achieving optimality for such problems. However, the use of wavelet bases of very high regularity for the spatial discretizations can be difficult in practice.
The factor $t / (t + d)$ resulting from the spatial operator compression can be improved to some extent for piecewise smooth basis functions using results from \cite{S04}, but for $d\geq 2$, optimality is then still not achieved.
These limitations motivate the different approach to approximating residuals that we take in the following section.

\section{Tree-Based Residual Approximations}

\newcommand{\trem}[1]{{\operatorname{t}(#1)}}
\newcommand{\treme}[2]{{\operatorname{t}_{#2}(#1)}}
\newcommand{\Stree}{\mathrm{T}(\cS)}
\newcommand{\FStree}{\mathrm{T}^\cF\!(\cS)}
\newcommand{\treeAs}{\cA^s_\mathrm{t}}
\newcommand{\Thetatree}{\mathrm{T}(\Theta)}

In this section, we develop a new approach for performing the different steps of Algorithm \ref{AGM}. Its central component is a new residual approximation using piecewise polynomial basis functions and wavelet index sets with tree structure, where we rely on techniques developed in \cite{S14,RS}. Selecting the residual coefficients of largest absolute value under this tree constraint can then be realized by the quasi-optimal tree coarsening procedure from \cite{BD:04,B:18}.

We require some auxiliary results on tree approximation from \cite{CDD:03,S14}, where we use the following basic notions as defined in \cite{S14} \M{for the wavelet-type basis $\Psi$ as introduced in Section \ref{sec:fullseqspace}.}

\begin{definition}\label{def:tree}
To each $\lambda \in \cS$ with $\abs{\lambda}>0$, we associate a $\lambda'\in\cS$ with $\abs{\lambda'} = \abs{\lambda} -1$ and $\meas(\supp\psi_\lambda\cap \supp \psi_{\lambda'}) > 0$. We then call $\lambda$ a \emph{child} of the \emph{parent} $\lambda'$ and write $\mathrm{C}(\lambda')$ for the set of all children of $\lambda'$, where we assume $\max_{\lambda \in \cS} \# \mathrm{C}(\lambda) < \infty$.
We call a subset $S \subseteq \cS$ a \emph{tree} if 
$S$ contains all $\lambda \in \cS$ with $\abs{\lambda} = 0$ and 
for all $\lambda \in S$, if $\lambda \in \mathrm{C}( \lambda' )$ then also $\lambda' \in S$. 
We denote the set of subsets of $\cS$ having such tree structure by $\Stree$.
\end{definition}

In addition, we denote that $\lambda$ is a \emph{descendant} of $\lambda'$ in the tree structure (that is, there exists $K\in \N$ such that with $\lambda_0 = \lambda$ and $\lambda_K = \lambda'$, one has $\lambda_0 \in \mathrm{C}(\lambda_1)$, $\ldots$, $\lambda_{K-1} \in \mathrm{C}(\lambda_K)$) by $\lambda \prec \lambda'$, and that $\lambda$ is a descendant of or equal to $\lambda'$ by $\lambda \preceq \lambda'$. 

Approximability of $\bv \in \ell_2(\cS)$ by expansions with this tree structure is then quantified \M{similarly to \eqref{stdAsdef1} and \eqref{stdAsdef2}},
\begin{equation}
\label{treeAsdef}
  \norm{\bv}_{\treeAs} := \sup_{N\in\N_0} (N+1)^s
  \inf_{\M{\substack{ \supp\bw \subseteq S \in \Stree\\ \# S\leq N}}} \norm{\bv - \bw}_{\ell_2(\cS)}.
\end{equation}
In addition, for index sets in $\cF \times \cS$ where each spatial component has tree structure, we write
\begin{equation}\label{FStreedef}
 \FStree :=  \bigl\{ \Lambda \subseteq \cF\times\cS\colon \text{for all $\nu\in\cF$, }  \{ \lambda \in \cS \colon ( \nu,\lambda) \in\Lambda \} \in \Stree  \bigr\}.
\end{equation}

\subsection{Tree approximability}

For quantifying the sparsity of sequences in $\ell_2(\cS)$ under the additional tree structure constraint, as in \cite{CDD:03}, we use the following notion:
for $\M{\bv} \in \ell_2(\cS)$, define $\trem{\M{\bv}} = ( \treme{\M{\bv}}{\lambda} )_{\lambda\in \cS}  \in \ell_2(\cS)$ by
\begin{equation}\label{eq:tcfdef}
    \treme{\M{\bv}}{\lambda} := \Bigl( \sum_{\substack{\lambda' \in \cS\\ \lambda'\preceq \lambda}} \abs{\M{\bv}_{\lambda'}}^2\Bigr)^{1/2}.
\end{equation}
Note that for $S \in \Stree$, we then have
\[
   \norm{ \M{\bv} - \bP{S} \M{\bv} }^2 = \sum_{ \substack{ \lambda \in \cS \setminus S \\ \exists \mu \in S\colon \lambda \in \mathrm{C}(\mu) }} \bigabs{ \treme{\M{\bv}}{\lambda}}^2 \,,
\]
where $\bP{S}\M{\bv}$ is defined by $(\bP{S}\M{\bv})_\lambda = \M{\bv}_\lambda$ for $\lambda \in S$ and $(\bP{S}\M{\bv})_\lambda = 0$ otherwise.
We have the following criterion for membership of $\M{\bv}$ in $\treeAs$ in terms of $\trem{\M{\bv}}$.

\begin{prop}[\!\!{\cite[Prop.~2.2]{CDD:03}}]
\label{prop:treeapproxspace}
	If $p \in (0,2)$ and $\trem{\M{\bv}} \in \wellp$, then $\M{\bv} \in \treeAs$ with $s = \frac{1}{p} - \frac{1}{2}$ and $\norm{\M{\bv}}_{\treeAs} \lesssim \norm{\trem{\M{\bv}}}_{\wellp}$.
\end{prop}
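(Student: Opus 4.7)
My plan is the standard level-set construction for tree approximation from the wavelet thresholding literature. The key observation is that thresholding the tree coefficients $\treme{\bv}{\lambda}$ automatically produces tree-structured index sets, so I can build a near-optimal tree by fixing a single threshold parameter.

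For $\eta > 0$, I would consider $T_\eta := \{\lambda \in \cS : \treme{\bv}{\lambda} > \eta\}$, augmented by the level-zero indices. The first step is to verify $T_\eta \in \Stree$: for any ancestor $\mu$ of $\lambda$, every descendant of $\lambda$ is also a descendant of $\mu$, so the definition \eqref{eq:tcfdef} gives $\treme{\bv}{\mu} \geq \treme{\bv}{\lambda}$, and upward-closure of the level set follows. Second, the cardinality is controlled by the weak-$\ell_p$ hypothesis: since the paper's quasi-norm definition gives $(\trem{\bv})^*_k \leq k^{-1/p} \norm{\trem{\bv}}_{\wellp}$, the number of entries with $\treme{\bv}{\lambda} > \eta$ is strictly less than $\eta^{-p} \norm{\trem{\bv}}_{\wellp}^p$.

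For the error, I would invoke the identity stated just before the proposition: $\norm{\bv - \bP{T_\eta}\bv}^2$ equals the sum of $\abs{\treme{\bv}{\lambda}}^2$ over those $\lambda \notin T_\eta$ whose parent lies in $T_\eta$. Each such $\lambda$ satisfies $\treme{\bv}{\lambda} \leq \eta$ by construction, while their total number is at most $M \cdot \#T_\eta$ with $M := \max_{\mu \in \cS}\#\mathrm{C}(\mu) < \infty$ supplied by Definition \ref{def:tree}. Combining the two bounds gives
\[
   \norm{\bv - \bP{T_\eta}\bv}^2 \;\leq\; M \#T_\eta \cdot \eta^2 \;\lesssim\; \eta^{2-p} \norm{\trem{\bv}}_{\wellp}^p,
\]
and hence $\norm{\bv - \bP{T_\eta}\bv} \lesssim \eta^{1-p/2} \norm{\trem{\bv}}_{\wellp}^{p/2}$.

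To conclude, I would balance error and cardinality: given $N \in \N$, choose $\eta \sim N^{-1/p} \norm{\trem{\bv}}_{\wellp}$, which ensures $\#T_\eta \leq N$ and yields $\norm{\bv - \bP{T_\eta}\bv} \lesssim N^{-(1/p - 1/2)} \norm{\trem{\bv}}_{\wellp}$. Via the definition \eqref{treeAsdef} of $\norm{\cdot}_{\treeAs}$ with $s = \frac1p - \frac12$, this gives the claimed bound, where the case $N=0$ is handled separately by noting that $\norm{\bv}_{\ell_2}^2 = \sum_{\abs{\mu}=0} \treme{\bv}{\mu}^2 \lesssim \norm{\trem{\bv}}_{\wellp}^2$ under the standing assumption of finitely many roots. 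The only mildly delicate point is the forced inclusion of all level-zero indices in any tree; under the standard coarse-scale-finite wavelet setup this adds at most a fixed additive constant to $\#T_\eta$ and leaves the rate unaffected. No step of the argument is expected to present a genuine obstacle — the mechanism is essentially combinatorial once the two key monotonicity and summability properties of $\trem{\bv}$ are in place.
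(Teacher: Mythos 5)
The paper does not prove this proposition; it imports it verbatim from \cite{CDD:03}, so there is no internal argument to compare against. Your reconstruction is the standard thresholding proof of that cited result and it is correct: the monotonicity $\treme{\bv}{\mu}\geq\treme{\bv}{\lambda}$ for ancestors $\mu$ of $\lambda$ makes the level set a tree, the weak-$\ell_p$ bound controls its cardinality, and the error identity over outer leaves together with the bounded-children assumption gives the $\eta^{2-p}$ error bound, which balances to the rate $s=\frac1p-\frac12$. Your handling of the forced root indices and the small-$N$ regime is the right (and standard) way to close the remaining edge cases, so I see no gap.
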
 

For our present purposes, we next show that the approximability result \eqref{fullapprox1} from \cite[Section 8.2]{BCDS} also holds in the more restrictive case of tree approximation using index sets from $\FStree$.

\begin{prop}\label{prop:treeapprox}
Under the assumptions of Corollary \ref{cor:fullapprox} \M{for $\bu$ as in \eqref{fullydiscreteexpansion}},
\begin{equation}\label{fullapproxtree}
   \norm{ \bu }_{\mathrm{t},p} :=  \biggl( \sum_{\nu \in \cF} \norm{ \trem{\bu_\nu} }_{ \wellp }^p \biggr)^{1/p} < \infty\quad \text{for any $p>0$ such that $\displaystyle \frac1p < \frac{\alpha}d + \frac12$.}
\end{equation}
\end{prop}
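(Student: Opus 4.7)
The plan is to mimic the derivation of Corollary \ref{cor:fullapprox} from Theorem \ref{thm:W2tau} carried out in \cite[\S 8.2]{BCDS}, but with the unrestricted $n$-term wavelet estimate \eqref{Zthetaapprox} for $Z_{\hat\alpha}$ strengthened to its tree analogue
\[
\norm{ \trem{\bv} }_{\wellp} \lesssim \norm{v}_{Z_{\hat\alpha}}, \qquad v \in Z_{\hat\alpha},
\]
for $\hat\alpha \in (0,\alpha]$ and the corresponding $p$ with $1/p = \hat\alpha/d + 1/2$. Granted this bound, I would apply it coefficientwise with $\bv = \bu_\nu$, combine it with the $\ell_p(\cF)$-summability of $(\norm{u_\nu}_{Z_{\hat\alpha}})_{\nu\in\cF}$ that Theorem \ref{thm:W2tau} provides, and use the embedding \eqref{wlpnested} to absorb any gap between the summability exponent in $\cF$ and the target $p$ in \eqref{fullapproxtree}. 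This parameter bookkeeping is identical to the one in \cite[\S 8.2]{BCDS}, so the admissible range of $p$ is exactly as in Corollary \ref{cor:fullapprox}.

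The key intermediate step is the tree bound above. I would establish it via the wavelet characterization of the interpolation space $[H^1(D), W^2_\tau(D)]_{\hat\alpha}$: by standard real interpolation, this space embeds continuously into a Besov space whose norm is equivalent to a weighted levelwise sequence-space norm on the wavelet coefficients with integrability exponent $q$ satisfying $1/q = \hat\alpha/d + 1/2$. This Besov-type decay of the levelwise coefficients then translates into a weak-$\ell_p$ bound on $\trem{\bv}$ by a maximal-function argument along the wavelet tree: for each $\lambda \in \cS$, the contribution to $\treme{\bv}{\lambda}$ from descendants at a fixed level $j \geq \abs{\lambda}$ is controlled by the corresponding level norm of $\bv$, and summing against the prescribed decay in $j$, together with the dyadic multiplicity of descendants, produces the desired weak-type bound exactly as in the classical tree approximation arguments of \cite{CDD:03} underlying Proposition \ref{prop:treeapproxspace}.

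The main obstacle is precisely this tree estimate. Since $\bv \mapsto \trem{\bv}$ is nonlinear, one cannot obtain it by linear operator interpolation between the endpoints $H^1(D)$ and $W^2_\tau(D)$; instead, one has to argue through the wavelet sequence-space characterization of the intermediate space and exploit the dyadic structure of the tree. A secondary technical point is the tightness of the Besov embedding $Z_{\hat\alpha} \hookrightarrow B^{1+\hat\alpha}_{q,q}$, which will cost an arbitrarily small amount in the smoothness exponent, but this is harmless since Theorem \ref{thm:W2tau} is already stated with strict inequality $\hat\alpha < \alpha$ and open exponent conditions, so the loss can be absorbed by enlarging $\hat\alpha$ slightly below $\alpha$.
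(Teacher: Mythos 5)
Your overall strategy matches the paper's: identify the interpolation space $Z_{\hat\alpha}$ with a Bessel potential space, embed it into a Besov space, invoke a tree approximability estimate on the wavelet coefficients, and combine with the $\ell_p$-summability from Theorem~\ref{thm:W2tau}. Two points of detail differ and create a gap that needs attention.

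First, $[\cdot,\cdot]_{\hat\alpha}$ in Theorem~\ref{thm:W2tau} is complex (not real) interpolation, and the paper identifies $Z_{\hat\alpha} = H^{1+\hat\alpha}_r(D)$ with $\frac{1}{r} = \frac12 + \bigl(\frac{1}{\tau} - \frac12\bigr)\hat\alpha$, then embeds into $B^{1+\beta}_{r,r}(D)$ for $\beta < \hat\alpha$ keeping the integrability index $r$ fixed. Your claim that the target Besov space has integrability $q$ with $\frac{1}{q} = \frac{\hat\alpha}{d} + \frac12$ conflates the Besov exponent with the weak-$\ell$ exponent of $\trem{\bv}$: the tree result from \cite{CDDD:01,CDD:03} yields $\trem{\bv} \in \well{\hat p}$ with $\frac{1}{\hat p} = \frac{\beta}{d} + \frac12$, which is what approaches $\frac{\hat\alpha}{d} + \frac12$ in the limit. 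Second, and more substantively, that tree estimate holds only under the Jackson-type condition $\frac{\beta}{d} > \frac{1}{r} - \frac12$, which for $\beta$ close to $\hat\alpha$ forces $\frac{1}{\tau} < \frac12 + \frac{1}{d}$. You assert the parameter bookkeeping is identical to the unconstrained case, but this condition is precisely where the dimension restriction $d>1$ from Corollary~\ref{cor:fullapprox} is used: one must take $\tau>1$ with $\frac{1}{\tau}$ approaching $\frac12 + \frac{1}{d}$ from below, which is only achievable within $\tau \in (1,2]$ when $d \geq 2$. Without checking this you cannot conclude that the admissible range of $p$ is as claimed. Once the condition is verified, the remainder closes as in \cite[\S8.2]{BCDS}; also, the maximal-function re-derivation you outline is unnecessary, since the exact sequence-space statement needed is already available from \cite[Cor.~4.2]{CDDD:01} together with \cite[Remark~2.3]{CDD:03} and can simply be cited.
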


\begin{proof}
For the space $Z_{\hat\alpha}$ in Theorem \ref{thm:W2tau}, we have (using Rychkov's universal extension operator \cite{R:99}, see \cite[Thm.~14.3.1]{Ag:15}) a characterization as a Bessel potential space,
\[
  [ H^1(D) , W^2_\tau(D)]_{\hat\alpha} = H^{1+\hat\alpha}_r(D), \quad \frac{1}{r} = \frac12 + \left( \frac{1}{\tau} - \frac12 \right) \hat\alpha.
\]
For any $\beta \in (0,\hat\alpha)$, we have that $H^{1+\hat\alpha}_r (D)$ \M{is continuously embedded into the Besov space} $B^{1+\beta}_{r,r}(D)$.
As a consequence of \cite[Cor.~4.2]{CDDD:01} and \cite[Remark 2.3]{CDD:03}, for $v = \sum_{\lambda \in \cS} \bv_\lambda \psi_\lambda \in B^{1+\beta}_{r,r}(D)$, if
\begin{equation}\label{thetacond}
  \frac{\beta}{d} > \frac{1}{r} - \frac{1}{2}
\end{equation}
then one has
\[
    \norm{\trem{\bv}}_{\well{\hat{p}}} \sim  \norm{ \trem{\bv} }_{\cA^{\beta/d}} \lesssim \norm{ v }_{B^{1+\beta}_{r,r}(D)}, \quad \frac{1}{\hat p} = \frac{\beta}{d} + \frac12.
\] 
We can choose $\beta < \hat\alpha$ such that \eqref{thetacond} is satisfied if $\frac{1}{\tau} - \frac12 < \frac{1}{d}$. We thus have
\[
   \sum_{\nu \in \cF} \norm{(\bu_{\nu,\lambda})_{\lambda \in \cS} }_{\well{\hat{p}}}^p < \infty
\]
for any $p$, $\hat p$ such that
\[
\frac{1}{p} < \frac{\alpha}{d} + \frac12 
   + \left( \frac{1}{\tau} - \frac12 - \frac{1}{d} \right) \hat\alpha,
\qquad
   \frac{1}{\hat p} < \frac{\hat\alpha}{d} + \frac12,
\]
and with Theorem \ref{thm:W2tau}, we obtain the assertion by taking $\hat\alpha$ sufficiently close to $\alpha$ and taking $\tau >1$ such that $\frac{1}{\tau}$ is sufficiently \M{close to} $\frac12 + \frac{1}{d}$, where we make use of our assumption $d>1$.
\end{proof}

\M{In what follows, for $\bv \in \ell_2(\cF\times\cS)$, we denote by $\treesupp \bv$ the set $\Lambda \in \FStree$ with minimal $\#\Lambda$ such that $\supp \bv \subseteq \Lambda$.}
Balancing the spatial approximations for each Legendre coefficient as in \cite[Thm.~3.1]{BCDS}, the summability property \eqref{fullapproxtree} combined with Proposition~\ref{prop:treeapproxspace} yields the following result on best approximations with spatial tree structure.

\begin{cor}
\label{cor:treebestapproxnorm}
 Let $\bu \in \ell_2(\cF\times\cS)$ and let $p>0$ be such that $\norm{ \bu }_{\mathrm{t},p} < \infty$. Then there exists $C>0$ independent of $\bu$ such that for all $n\in\N$,
 \begin{equation}\label{treeapproximability1}
   \min \{  \norm{\bu - \bv} \colon \M{ \#\treesupp \bv } \leq n \}  \leq C n^{-\frac1p + \frac12} \norm{\bu}_{\mathrm{t},p}.
 \end{equation}
\end{cor}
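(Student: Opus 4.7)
The plan is to combine the per-coefficient tree estimate of Proposition~\ref{prop:treeapproxspace} with a Stechkin-type tail bound and a balancing of the budget across Legendre indices, following the template of \cite[Thm.~3.1]{BCDS} where the analogous step is carried out for ordinary best $n$-term approximation. The assertion is trivial for $p \geq 2$, so assume $p \in (0,2)$ and set $s := 1/p - 1/2 > 0$.

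Write $\alpha_\nu := \norm{\trem{\bu_\nu}}_{\wellp}$, so that by hypothesis $(\alpha_\nu)_{\nu \in \cF} \in \ell_p(\cF)$ with $\ell_p$-quasi-norm equal to $\norm{\bu}_{\mathrm{t},p}$. Proposition~\ref{prop:treeapproxspace} supplies, for every $\nu \in \cF$ and every $N_\nu \in \N_0$, a tree $S_\nu \in \Stree$ with $\#S_\nu \leq N_\nu$ satisfying $\norm{\bu_\nu - \bP{S_\nu}\bu_\nu} \leq C_1(N_\nu + 1)^{-s}\alpha_\nu$, with $C_1$ independent of $\nu$ and $N_\nu$. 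The candidate approximation $\bv$ will be defined by selecting a finite $F \subset \cF$, setting $\bv_\nu := 0$ for $\nu \notin F$, and $\bv_\nu := \bP{S_\nu}\bu_\nu$ with an appropriately chosen budget $N_\nu$ for $\nu \in F$. Since each $S_\nu$ is a tree, $\treesupp \bv \subseteq F \times \bigcup_{\nu \in F} S_\nu \in \FStree$, and $\#\treesupp\bv \leq \sum_{\nu \in F} N_\nu$.

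Choose $F$ to index the $\lfloor n/2 \rfloor$ largest values of $\alpha_\nu$ among those that are nonzero, and distribute the remaining budget by the Lagrange-type choice $N_\nu \sim n\,\alpha_\nu^p / \sum_{\nu' \in F}\alpha_{\nu'}^p$, with integer rounding so that $\sum_{\nu \in F} N_\nu \leq n$. Stechkin's inequality for $\ell_p$-sequences gives $\sum_{\nu \notin F}\alpha_\nu^2 \lesssim (\#F)^{-2s}\norm{\bu}_{\mathrm{t},p}^2$, and since $\norm{\bu_\nu}$ is bounded by a fixed multiple of $\alpha_\nu$ (because $\norm{\bu_\nu}^2$ equals the sum of $\treme{\bu_\nu}{\lambda_0}^2$ over the finitely many root indices $\lambda_0$ of the tree), the contribution from $\nu \notin F$ is of order $n^{-2s}\norm{\bu}_{\mathrm{t},p}^2$. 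For the indices in $F$, the arithmetic identities $2s+1 = 2/p$ and $2 - 2sp = p$ yield
\[
  \sum_{\nu \in F} (N_\nu+1)^{-2s}\alpha_\nu^2 \lesssim n^{-2s}\Bigl(\sum_{\nu \in F}\alpha_\nu^p\Bigr)^{2/p} \leq n^{-2s}\norm{\bu}_{\mathrm{t},p}^2,
\]
so that the total error $\norm{\bu - \bv}^2$ is bounded by a constant multiple of $n^{-2s}\norm{\bu}_{\mathrm{t},p}^2$, which is the claim.

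The main obstacle is purely technical: verifying that the choice $N_\nu \propto \alpha_\nu^p$, together with integer rounding and the degenerate cases $\alpha_\nu = 0$ or $\#\{\nu : \alpha_\nu \neq 0\}$ smaller than $\lfloor n/2 \rfloor$, indeed balances the per-$\nu$ errors so that both the truncation contribution and the per-coefficient approximation contribution scale like $n^{-2s}$. Modulo this balancing computation, which is essentially the one already carried out in \cite[Thm.~3.1]{BCDS}, the only new ingredient is that the per-coefficient approximation is tree-constrained and is controlled through Proposition~\ref{prop:treeapproxspace}.
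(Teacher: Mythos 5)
Your argument is correct and is precisely the balancing argument the paper references from \cite[Thm.~3.1]{BCDS} combined with Proposition~\ref{prop:treeapproxspace}: the per-coefficient tree rate, the allocation $N_\nu\sim n\alpha_\nu^p/\sum_{\nu'}\alpha_{\nu'}^p$, the identity $2-2sp=p$, and the Stechkin tail together reconstruct the paper's implicit proof. The only quibble is the throwaway remark that the case $p\geq 2$ is ``trivial'': for $p>2$ one has $s<0$ and the claim at $n=1$ would require $\norm{\bu}\lesssim\norm{\bu}_{\mathrm{t},p}$, which can fail since $\ell_p\not\subset\ell_2$; however Proposition~\ref{prop:treeapproxspace} and every use of the corollary are restricted to $p<2$, so this side remark does not affect the substance of the argument.
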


Note that under the assumptions of Corollary~\ref{cor:fullapprox}, \eqref{treeapproximability1} and Proposition~\ref{prop:treeapprox} imply that for any $\varepsilon>0$, the smallest $\Lambda \in \FStree$ such that $\norm{\bu - \bv}\leq \varepsilon$ for a $\bv \in \ell_2(\cF\times\cS)$ with $\supp \bv \subseteq \Lambda$ satisfies
\begin{equation}\label{treeapproximability2}
  \#\Lambda \leq C_s\, \varepsilon^{-\frac1s} \norm{ \bu }_{\mathrm{t},p}^{\frac1s}, \quad\text{for any } s < \frac{\alpha}{d} \;\;\text{ and }\; \frac1p =  s + \frac12,
\end{equation}
where $C_s>0$ depends on $s$. In other words, using tree approximation in space we recover the same convergence rates up to $\frac{\alpha}{d}$ as without the tree constraint in \eqref{eq:fullapproxAs}.

\begin{remark}\label{rem:treenormequiv}
For $\frac1p = s + \frac12$, for any $v \in L_2(Y,V,\sigma)$ with fully discrete representation $\bv \in \ell_2(\cF\times \cS)$,
\begin{equation}
  \bignorm{\bigl( \norm{\bv_\nu} \bigr)_{\M{\nu \in \cF}} }_{\cA^s} \lesssim \norm{\bv}_{ \cA^s} \sim \norm{\bv}_{\wellp} \leq  \norm{ \bv }_{\mathrm{t},p} .
\end{equation}
\end{remark}

\subsection{Multi-indices of unbounded length}\label{sec:multiindices}

In the numerical scheme that we consider, the vectors $\bv_\nu$ for given $\bv \in \ell_2(\cF\times\cS)$ need to be accessed by indices $\nu \in \cF \subset \N_0^\cM$ that may have non-zero entries in arbitrary positions.
As a consequence of the bidiagonal structure of the matrices $\bM_\mu$, one needs to store and iterate over finite subsets $F \subset \cF$ and to be able to access vector elements indexed by any $\nu \in F$ as well as by the indices $\nu \pm e_\mu$ that differ in only one component.

In the class of problems under consideration, the indices $\nu \in \cF$ activated in near-best approximations are generally extremely sparse, that is, for many such indices $\nu$ one has
\[
  \#\supp \nu\, \ll \,\dim(\nu):=\max\{ \mu \in \cM\colon \nu_\mu \neq 0\}\,.
  \]

\begin{remark}\label{rem:nuscaling}
As shown in \cite[Prop.~6.6]{BCD}, there are examples of problem data for \eqref{affinepde} such that the nonincreasing rearrangements of $(\norm{u_\nu}_V)_{\nu \in \cF}$ and $( \norm{ u_{e_\mu}}_V )_{\mu \in \cM}$ have the same asymptotic decay. In such a case, for a smallest $F_\varepsilon \subset \cF$ realizing the approximation of $u$ with error $\varepsilon>0$, one has $\# F_\varepsilon \sim \max_{\nu \in F_\varepsilon} \dim(\nu)$. Numerical tests (see \cite{BCD}) indicate that more generally, for the class of problems considered here, one has to expect $\max_{\nu \in F_\varepsilon} \dim(\nu) \gtrsim \varepsilon^{-t}$ for some $t>0$.
\end{remark}

For storing elements of $\cF$, we assume a fixed enumeration of the indices $\cM$, which reduces the problem to storing vectors with integer indices.
In view of Remark \ref{rem:nuscaling}, direct storage of the required $\nu$ in the form $(\nu_1, \nu_2, \ldots,  \nu_{\dim(\nu)})$ is too inefficient and will in general lead to a deterioration of the computational complexity of the method by some negative power of $\varepsilon$ as noted in Remark \ref{rem:nuscaling}. As an alternative, a sparse encoding of indices is suggested in \cite{GittelsonThesis}, where for $\nu$ with $\supp \nu = \{ i_1, \ldots, i_n\}$, the vectors $(i_1,\ldots, i_n)$ and $(\nu_{i_1},\ldots, \nu_{i_n})$ are stored. 

\begin{remark}\label{rem:coding}
A further alternative that is always at least as efficient as both direct or sparse storage is a \emph{run-length coding} of zeros in $\nu$, where a sequence of $m$ zeros is represented by an entry $-m$. 
More precisely, each $\nu \in \cF$ is encoded as a tuple $(m_1, m_2, \ldots, m_N)$ with $N\in \N$, where $m_i \in \Z \setminus\{0\}$ for $i=1,\ldots,N$, and where either $\nu_1 = m_1$ if $m_1>0$, or $\nu_1 = \ldots = \nu_{-m_1} = 0$ if $m_1 < 0$, in which case $\nu_{-m_1 + 1} = m_2 > 0$; the further entries of $\nu$ are then given recursively by the same scheme.
For instance, the Kronecker vectors corresponding to the first coordinates are encoded as the tuples $(1), (-1,1), (-2,1),\ldots$, respectively. Given such a storage scheme, the stored $\nu$ can be mapped to linear indices by hashing or tree data structures with (amortized) costs of order $\mathcal{O}(\#\supp \nu)$. 
\end{remark}

In view of these considerations, in what follows we assume the required operations on multi-indices $\nu$ to incur costs proportional to $\#\supp \nu$.

\subsection{Semidiscrete residuals}

As a first step in our adaptive scheme, we consider the approximation of residuals with only a parametric semidiscretization as in \eqref{semidiscreteform}, where each spatial component is still an element of the full function space $V$. 
Here we use adaptive operator compression to construct a routine \textsc{Apply} taking as input a tolerance $\eta>0$ and any $\bv \in \ell_2(\cF \times \cS)$ with $\#\suppF \bv < \infty$, where
 \[
   \suppF \bv := \{ \nu \in \cF\colon \supp \bv_\nu \neq \emptyset \},
 \]
and that produces a $\bw := \text{\textsc{Apply}}(\bv; \eta)$ such that $\norm{ \bB \bv  - \bw} \leq \eta$. In addition, both $\#\suppF \bw$ and the number of required products of the form $\bA_\mu \bv_\nu$ for $\mu\in \cM$, $\nu\in\cF$ satisfy quasi-optimal bounds with respect to $\eta$.

Here, the only approximation that needs to be performed on $\bB$ concerns the infinite summation, and the approximation $\bw$ is obtained by a suitable combination of the truncated operators 
\begin{equation}\label{Bell}
  \bB_\ell = \sum_{\substack{\mu \in \cM_0 \\ \abs{\mu} <  \ell }  } \bM_\mu \otimes \bA_\mu,  \quad \ell \in \N_0,
\end{equation}
where $\bB_0 = 0$.

A strategy for semidiscrete approximation of the stochastic residual has also been devised in \cite{Gittelson:13}. Here we use a different construction that is specifically adapted to the multilevel structure of the expansion \eqref{eq:affinecoeff} based on Proposition \ref{BoundSemiDisc}.
The semidiscrete scheme is summarized in Algorithm \ref{Apply_nosort}, with the result returned in a form that facilitates its subsequent use in a fully discrete residual evaluation. We next prove a complexity estimate for this scheme. In optimizing the choice of the $\ell_j$ in \eqref{eq:elljdef}, we follow \cite[Thm.~4.6]{Dijkema:09}.

\begin{algorithm}[t]
	\caption{$( M(\nu) )_{\nu \in F} = \text{\textsc{Apply}}(\bv; \eta)$, for $\#\suppF \bv < \infty$, $\eta>0$.}\label{Apply_nosort}
\begin{enumerate}[{\bf(i)}]
\item If $\norm{\bB} \norm{\bv} \leq \eta$, return \M{the empty tuple} with $F = \emptyset$;
otherwise, with $\bar J:=\ceil{\log_2 \#\suppF \bv}$, 
 for $j = 0,\ldots, \bar J$, 
  determine $F_j \subset \#\suppF \bv$
   such that $\# F_j \leq 2^{j}$ 
   and that $\bP{F_j\times \cS} \bv$ satisfies
\begin{equation}\label{eq:Apply_nosort_2j}\tag{\M{A4.1.1}}
    \norm{\bv- \bP{F_j\times \cS} \bv } \leq C \min_{\# \tilde F \leq 2^j} \norm{ \bv - \bP{\tilde F\times \cS} \bv }  
\end{equation}
for an absolute constant $C>0$.
Choose $J$ as the minimal integer such that
\[
  \delta := \norm{\bB} \norm{\bv - \bP{ F_J \times \cS}\bv} \leq \frac{\eta}{2}.
\]

\item With $\bd_0 :=\bP{F_0\times \cS} \bv$, $\bd_j := (\bP{F_j\times \cS} - \bP{F_{j-1}\times \cS} )\bv$, $j=1,\ldots, J$, and $N_j := \# F_j$, set 
\begin{equation}\label{eq:elljdef}\tag{\M{A4.1.2}}
	\ell_j = \left\lceil\alpha^{-1} \log_2 \biggl( \frac{C_\bB}{\eta - \delta} \biggl( \frac{\norm{\bd_j}}{N_j} \biggr)^{\frac{\alpha}{\alpha+d}} \Bigl( \sum_{i=0}^J \norm{\bd_i}^{\frac{d}{\alpha+d}} N_i^{-\frac{\alpha}{\alpha+d}} \Bigr) \biggr)\right\rceil,
	\quad j = 0, \ldots, J.
\end{equation}

\item With $\bw$ given by
  \begin{equation}\label{semidiscrwdef}\tag{\M{A4.1.3}}
    \bw = \sum_{j=0}^J  \bB_{\ell_j} \bd_j ,
  \end{equation}
  for each $\nu \in F:= \suppF \bw$, collect the sets $M(\nu) \subset \cM_0 \times \suppF \bv$ of minimal size such that
\begin{equation}\label{semidiscrwdefM}\tag{\M{A4.1.4}}
 \bw_\nu = \sum_{(\mu,\nu') \in M(\nu)}  (\bM_{\mu})_{\nu,\nu'} \,\bA_{\mu} \, \bv_{\nu'},\quad \nu \in F,
\end{equation}
and return $( M(\nu) )_{\nu \in F}$.
\end{enumerate}
\end{algorithm}

\begin{prop}\label{semidiscrapply}
	Let $s >0$ with $s< \frac{\alpha}{d}$, let $\bB$ be as in \eqref{seqform}, let $\bv$ satisfy $\#\suppF \bv < \infty$, and let $\bw$ be the approximation as in \eqref{semidiscrwdefM} of $\bB\bv$ given by Algorithm \ref{Apply_nosort}. Then $\norm{\bB\bv-\bw} \leq  \eta$, for $F = \supp_\cF \bw$ we have
    \begin{equation}\label{eq:Fest}
    \# F \leq    \sum_{\nu \in F} \# M(\nu)
     \lesssim  \sum_{j=0}^J 2^{d \ell_j} \# F_j   \lesssim \eta^{-\frac1s} \bignorm{\bigl(\norm{\bv_\nu} \bigr)_{\nu\in\cF}}_{\cA^s}^{\frac1s},
    \end{equation}
    and $\ell_j$ for $j=0,\ldots,J$ in \eqref{eq:elljdef} satisfy
    \begin{equation}\label{eq:elljest}
      \max_j \ell_j \lesssim 1 + \abs{\log \eta} + \log \bignorm{\bigl(\norm{\bv_\nu} \bigr)_{\nu\in\cF}}_{\cA^s}.
    \end{equation}
  The constants in the inequalities depend on $C$ from \eqref{eq:Apply_nosort_2j}, $C_\bB$, $d$, $\alpha$, $s$, and on $C_1$ from \eqref{multilevel1}.
\end{prop}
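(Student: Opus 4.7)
The plan is to address the three conclusions in sequence, leveraging the operator compression from Proposition~\ref{BoundSemiDisc}, the $\cF$-direction vector compression built into the $F_j$, and the optimisation implicit in the choice \eqref{eq:elljdef}.

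For the error bound $\norm{\bB\bv - \bw} \leq \eta$, I would decompose
\begin{equation*}
  \bB\bv - \bw \;=\; \bB\bigl(\bv - \bP{F_J\times\cS}\bv\bigr) + \sum_{j=0}^{J} (\bB - \bB_{\ell_j})\bd_j ,
\end{equation*}
since $\sum_j \bd_j = \bP{F_J\times\cS}\bv$. The first term is bounded by $\delta \leq \eta/2$ by the definition of $J$, while Proposition~\ref{BoundSemiDisc} gives $\norm{(\bB - \bB_{\ell_j})\bd_j} \leq C_\bB 2^{-\alpha\ell_j}\norm{\bd_j}$ for each summand of the second term. The formula \eqref{eq:elljdef} is tailored precisely so that $\sum_j C_\bB 2^{-\alpha\ell_j}\norm{\bd_j} \leq \eta - \delta$, whence the total is bounded by $\eta$.

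For the cost estimate, I would count contributions to $\sum_\nu \#M(\nu)$ level by level: since $\bM_\mu$ is diagonal (for $\mu=0$) or bidiagonal (for $\mu\in\cM$), each $\nu' \in \supp_\cF \bd_j$ is coupled to at most two $\nu\in\cF$ per $\mu$; combined with \eqref{multilevel1} giving $\#\{\mu\in\cM_0 : \abs{\mu}<\ell_j\} \lesssim 2^{d\ell_j}$ and $\#\supp_\cF \bd_j \leq N_j$, this yields the middle inequality $\sum_\nu \#M(\nu) \lesssim \sum_j 2^{d\ell_j} N_j$. Substituting \eqref{eq:elljdef} and rewriting the product structure gives, after an elementary calculation,
\begin{equation*}
  \sum_{j=0}^{J} 2^{d\ell_j} N_j \;\lesssim\; (\eta-\delta)^{-d/\alpha}\, S^{(\alpha+d)/\alpha}, \qquad S := \sum_{i=0}^{J} \norm{\bd_i}^{d/(\alpha+d)} N_i^{\alpha/(\alpha+d)}.
\end{equation*}

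The crux is then to control $S$ in terms of $A := \bignorm{(\norm{\bv_\nu})_{\nu\in\cF}}_{\cA^s}$. The quasi-optimality \eqref{eq:Apply_nosort_2j} together with the $\cA^s$-quasi-norm implies $\norm{\bv - \bP{F_j\times\cS}\bv} \lesssim 2^{-js} A$, hence $\norm{\bd_j} \lesssim 2^{-js} A$, while $N_j \sim 2^j$. Since $s < \alpha/d$, each term of $S$ is of order $A^{d/(\alpha+d)}\,2^{j(\alpha - sd)/(\alpha+d)}$, which \emph{grows} with $j$, so $S$ is dominated by its top index $J$. The defining condition on $J$ combined with the same decay forces $2^J \lesssim (A/\eta)^{1/s}$. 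Plugging these in and collecting exponents via the identity $d/\alpha + (\alpha - sd)/(s\alpha) = 1/s$ yields $(\eta-\delta)^{-d/\alpha} S^{(\alpha+d)/\alpha} \lesssim \eta^{-1/s} A^{1/s}$, which is \eqref{eq:Fest}. Finally, the bound \eqref{eq:elljest} follows by applying $\log_2$ to the argument of the ceiling in \eqref{eq:elljdef} and inserting the crude estimates $\norm{\bd_j}/N_j \leq A$, the bound on $S$ just obtained, and $\eta - \delta \geq \eta/2$. The main obstacle I anticipate is purely arithmetic bookkeeping of the several exponents appearing in the optimisation step; once the cancellations above are laid out, the remainder of the argument is routine.
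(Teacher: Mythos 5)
Your proof is correct, and while it shares the paper's skeleton (the error split into the truncation term $\delta$ plus $\sum_j(\bB-\bB_{\ell_j})\bd_j$ controlled by Proposition~\ref{BoundSemiDisc}, and the count $\sum_\nu \#M(\nu)\lesssim\sum_j 2^{d\ell_j}N_j$ via bidiagonality of $\bM_\mu$ and \eqref{multilevel1}), it handles the central estimate $\sum_j 2^{d\ell_j}N_j\lesssim\eta^{-1/s}A^{1/s}$ by a genuinely different route. You substitute the closed-form Lagrange minimizer into the cost and evaluate it exactly as $(\eta-\delta)^{-d/\alpha}S^{(\alpha+d)/\alpha}$ with $S=\sum_i\norm{\bd_i}^{d/(\alpha+d)}N_i^{\alpha/(\alpha+d)}$, then bound $S$ using $\norm{\bd_j}\lesssim 2^{-js}A$, $N_j\leq 2^j$, and $2^J\lesssim(A/\eta)^{1/s}$ (the last following from minimality of $J$), finishing with the exponent identity $d/\alpha+(\alpha-sd)/(s\alpha)=1/s$. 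The paper instead never evaluates the optimized objective: it invokes quasi-minimality of the rounded Lagrange solution and exhibits a feasible competitor $\hat\ell_j$ built from auxiliary rates $s<s_1<s_2<\alpha/d$ and a threshold level $K$, then bounds $T(\hat\ell_0,\ldots,\hat\ell_J)\lesssim 2^K\lesssim\eta^{-1/s}A^{1/s}$. Your approach is more direct and dispenses with the auxiliary parameters, at the cost of more exponent bookkeeping; the paper's is more robust to not knowing the minimizer in closed form. One point worth flagging: your $S$ carries $N_i^{+\alpha/(\alpha+d)}$, whereas \eqref{eq:elljdef} as printed has $N_i^{-\alpha/(\alpha+d)}$ inside the sum; your version is the actual Lagrange solution (and is what makes both the feasibility cancellation $\sum_j C_\bB 2^{-\alpha\ell_j}\norm{\bd_j}\leq\eta-\delta$ and your cost evaluation work), so your argument is internally consistent and in effect corrects a sign typo in the displayed formula.
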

\begin{proof}
With the notation of Algorithm \ref{Apply_nosort}, we first note that, because $\#M(\nu) >0$ for every $\nu \in F$,
\[
\begin{aligned}
  \# F \leq  \sum_{\nu \in F} \# M(\nu)\,.
\end{aligned}
\]
Since $\bM_\mu$ is diagonal or bi-diagonal for all $\mu$,
\[
\begin{aligned}
  \sum_{\nu \in \cF} \# M(\nu) & \leq \sum_{j=0}^J \sum_{\substack{\mu \in \cM_0 \\ \abs{\mu} \leq \ell_j}} \sum_{\nu \in F_j} \# \bigl\{ \nu' \in \suppF \bv \colon \bigl( \bM_\mu \bigr)_{\nu',\nu} \neq 0 \bigr\} 
    \\  & \leq \sum_{j=0}^J \sum_{\substack{\mu \in \cM_0 \\ \abs{\mu} \leq \ell_j}} 2 N_j \lesssim \sum_{j=0}^J 2^{d\ell_j} N_j =: T(\ell_0,\ldots,\ell_J) \,.
\end{aligned}
\]
Let $\tilde\ell_0,\ldots,\tilde\ell_J$ minimize $T(\tilde\ell_0,\ldots,\tilde\ell_J)$ subject to the constraint $\sum_{j=0}^J \norm{\bB - \bB_{\tilde\ell_j} } \norm{\bd_j} \leq \eta - \delta$. Then the choice \eqref{eq:elljdef} of $\ell_0,\ldots,\ell_J$ (which corresponds to performing this minimization over $\R^{J+1}$ and rounding to the next largest integer) ensures that $T(\ell_0,\ldots,\ell_J) \lesssim T(\tilde\ell_0,\ldots,\tilde\ell_J)$.

It thus remains to show that $T(\tilde\ell_0,\ldots,\tilde\ell_J) \lesssim \eta^{-\frac1s} A(\bv)^{\frac1s}$
with 
\[
   A(\bv) := \bignorm{\bigl(\norm{\bv_\nu} \bigr)_{\nu\in\cF}}_{\cA^s}^{\frac1s} \,.
\]
Since $\suppF \bv$ is bounded, we have $(\norm{\bv_\nu} \bigr)_{\nu\in\cF} \in \cA^s(\cF)$ and thus, for $j=0,\ldots,J$,
\[
 \|\bv- \bP{F_j\times \cS} \bv\|\leq C 2^{-sj} A(\bv) ,
\] 
which for $j = 1,\ldots,J$ yields
	\begin{equation}\label{djest}
   \norm{\bd_j} \leq \norm{ \bv - \bP{F_j\times \cS}\bv} + \norm{\bv - \bP{F_{j-1}\times \cS} \bv } \leq ( 1+ 2^s) A(\bv) 2^{-sj}.
\end{equation}
We now choose $s_1,s_2>0$ such that $s < s_1 < s_2 < \frac\alpha{d}$. Take $K \in \N$ with minimal $K \geq J$ such that 
$
  \sum_{j=0}^J 2^{-(K-j) s_1} \norm{\bd_j} \leq \eta - \delta
$.
Then 
\[
  \frac\eta{2} \leq  \eta - \delta <  \sum_{j=0}^J 2^{-(K-1-j) s_1} \norm{\bd_j} \lesssim \sum_{j=0}^J 2^{-(K-j) s_1 } 2^{-sj} A(\bv) \lesssim 2^{-K s} A(\bv) \,,
\]
which implies $2^K \lesssim \eta^{-\frac1s}A(\bv)^{\frac1s}$.
For each $j$, let $\hat\ell_j \in \Z$ be the smallest integers such that $C_\bB 2^{-\alpha\hat\ell_j} \leq 2^{-(K-j)s_1}$. Then on the one hand, by Proposition \ref{BoundSemiDisc} and the choice of $K$,
\[
 \norm{\bB \bv - \bw} \leq \sum_{j=0}^J \norm{\bB-\bB_{\hat\ell_j} }\norm{\bd_j} + \delta \leq 
 \sum_{j=0}^J  C_\bB 2^{-\alpha\hat\ell_j} \norm{\bd_j} + \delta \leq  \eta.
\]
On the other hand, using $s_2 d < \alpha$,
\[
  2^{-(K-j)s_1} < C_\bB 2^{-\alpha (\hat \ell_j -1)} \leq C_\bB 2^{- s_2 d( \hat\ell_j -1) },
\]
and as a consequence $2^{d \hat \ell_j} \lesssim 2^{(K-j) s_1 / s_2}$.
We thus obtain
\[
  T(\hat \ell_0,\ldots,\hat\ell_J) \lesssim \sum_{j=0}^J 2^{(K-j) s_1/s_2} 2^j \lesssim 2^K \lesssim \eta^{-\frac1s} A(\bv)^{\frac1s},
\]
completing the proof of \eqref{eq:Fest}. The estimate \eqref{eq:elljest} follows from \eqref{eq:elljdef} and \eqref{djest}. 
\end{proof}

\subsection{Fully discrete residual approximation using tree evaluation}

For the approximation of the full residual on $\cF\times\cS$, we use concepts developed in \cite{S14} and \cite{RS} for handling the spatial degrees of freedom. This requires $\psi_\lambda$, $\lambda \in \cS$, and $\theta_\mu$, $\mu \in \cM$, to be piecewise polynomial \M{functions}.

We assume a family of tesselations into open convex polygonal subsets of the spatial domain $D$ to be given,
resulting from a fixed hierarchy of refinements $\cT_1,\cT_2,\ldots$ of an initial tessellation $\cT_0$.
For each $j$, we assume the elements $T \in \cT_j$ to form a partition of $D$, that is,
$\bigcup_{T\in {\cT}_j } \overline{T} = \overline{D}$ and for $T_1,T_2\in \cT_j$ with $T_1\neq T_2$ we have $T_1\cap T_2 = \emptyset$, and $\meas(T) \sim 2^{-j}$ for $T \in \cT_j$.
Furthermore $ \cT_{j}$ is a refinement of  $ \cT_{j-1}$ in the sense that for any $T\in  \cT_{j-1}$, there exists a unique subset $\tau \subseteq \cT_{j}$ such that $\overline{T} = \bigcup_{T'\in \tau} \overline{T'}$, where $\#\tau$ is bounded independently of $j$ and $T$.
Conversely, for $j' < j$, there exists a unique element $T' \in \cT_{j'}$ such that $T\cap T' \neq \emptyset$. 
Let 
\begin{equation*}
\hat \cT := \bigcup_{j=0}^\infty \cT_j \,.	
\end{equation*}
We define a \emph{tiling} to be a finite subset $\cT\subset \hat \cT$ such that 
$\bigcup_{T \in \cT} \overline{T} = \overline{D}$ and the elements of $\cT$ are pairwise disjoint.
For each tiling $\cT$ and $m\in \N_0$, we write $\PP_m(\cT)$ for the set of $f \in L_2(D)$ that are piecewise polynomial \M{functions} of degree $m$ with respect to $\cT$, that is,
\[
  f = \sum_{T\in \cT} q_{T} \Chi_T
\]  
with \M{polynomial functions} $q_{T}$ of degree at most $m$.
If $v \in \PP_m(\cT_j)$ with sufficiently large $m$ and $j$, we denote by $\cT(v)$ the smallest tiling such that $v$ is \M{a piecewise polynomial function} on $\cT(v)$ and \M{define}
\[
  \M{\cT_{\neq 0}(v) := \{ T \in \cT(v) \colon v|_T \neq 0  \};}
\]
\M{in other words, $\cT_{\neq 0}(v)$ comprises those elements of $\cT(v)$ that are contained in $\supp v$.}

\begin{example}
In our numerical tests, we use dyadic subdivisions of the cube $D=[0,1]^d$, where for $j\geq 0$,
\[
\cT_j = \bigl\{  (2^{-j}(k_1-1), 2^{-j}k_1) \times \cdots\times (2^{-j}(k_d-1), 2^{-j}k_d)  \colon  k \in \{1,\ldots, 2^j\}^d  \bigr\} .
\]
Note that $\# \cT_j = 2^{dj}$; here, $\hat\cT$ is the set of dyadic subcubes of $D$.
\end{example}

In order to apply the results from \cite{S14,RS}, we make the following additional assumptions on our wavelet basis, which are satisfied for standard continuously differentiable spline wavelets.

\begin{ass}\label{ass:pwpolypsi}
 Let the wavelet-type Riesz basis $\Psi$ satisfy the following conditions:
\begin{enumerate}[{\rm(i)}]
\item $\diam\supp \psi_\lambda \sim 2^{-\abs{\lambda}}$ for $\lambda \in \cS$.
\item There exist $m \in \N$ and $k \in \N_0$ such that for all $\lambda \in \cS$, $\psi_\lambda \in H^2(D) \cap \PP_{m}(\cT_{i_\lambda})$ with $i_\lambda \leq \abs{\lambda} + k$, and $\#\cT_{\neq 0}(\psi_\lambda) \leq C$.
\item For each $\ell \in \N_0$, $\overline{D} = \bigcup_{\abs{\lambda} = \ell} \supp \psi_\lambda$.
\item For each $\lambda\in\cS$, if $\int_D \psi_\lambda \sdd x \neq 0$, then $\abs{\lambda} = 0$ or $\dist(\supp \psi_\lambda, \partial D) \lesssim 2^{-\abs{\lambda}}$.
\end{enumerate}
Moreover, we assume that for each $\ell \in \N_0$, there exist \M{a countable index set $\Sigma_\ell$ and} a single-scale basis $\Phi_\ell = \{\varphi_\lambda: \lambda\in \Sigma_\ell \}$ such that 
\begin{equation}\label{eq:scalingspan}
\linspan\{ \psi_\lambda \colon \abs{\lambda}\leq \ell\} = \linspan \Phi_\ell,
\end{equation}
 satisfying the following conditions:
\begin{enumerate}[{\rm(i)}]
\setcounter{enumi}{4}
\item $\diam \supp \varphi_\lambda \sim 2^{-\ell}$ for $\lambda \in \Sigma_\ell$.
\item For any $j$ and any $T\in \cT_j$, the functions $\varphi_\lambda|_{T}$ with $|\lambda| =j$, $\varphi_\lambda|_{T}\ne 0$ are linearly independent.

\end{enumerate}	
\end{ass}

For standard spline wavelet bases, a single-scale basis $\Phi_\ell$ satisfying the conditions is given by the scaling functions on level $\ell$.
For the single-scale index sets $\Sigma_\ell$, we again write $\abs{\lambda}=\ell$ for $\lambda \in \Sigma_\ell$. We assume without loss of generality that $\Sigma_\ell \cap \Sigma_{\ell'} = \emptyset$ for $\ell' \neq \ell$.
Note that due to \eqref{eq:scalingspan}, the conditions in Assumptions \ref{ass:pwpolypsi}(ii) also hold for the functions $\varphi_\lambda$, $\lambda \in \bigcup_{\ell \geq 0} \Sigma_\ell$. As a consequence of the locality conditions in Assumptions \ref{ass:pwpolypsi}(i) and (v), 
$\#\cT_{\neq 0}(\psi_\lambda)$ and $\#\cT_{\neq 0}(\varphi_\lambda)$ are uniformly bounded for all respective $\lambda$.

\begin{ass}\label{ass:pwpolytheta}
There exist $\tilde m \in \N$, $\tilde k \in \N_0$ such that for all $\mu \in \cM_0$, $\theta_\mu \in W^1_\infty(D) \cap \PP_{\tilde m}(\cT_{j_\mu})$ with $j_\mu \leq \abs{\mu} + \tilde k$, and $\#\cT_{\neq 0}(\theta_\mu) \leq \tilde C$.
\end{ass}

Assumptions \ref{ass:pwpolypsi} and \ref{ass:pwpolytheta} imply in particular that $\nabla\cdot(\theta_{\mu} \nabla \psi_{\lambda} )$ is \M{a piecewise polynomial function} on $\cT_{\neq 0}(\theta_\mu \psi_\lambda)$ with at most $\max\{\#\cT_{\neq 0}(\psi_\lambda), \#\cT_{\neq 0}(\theta_\mu) \} \leq C$ terms, where $C$ is a uniform constant. Note that with additional technical effort, one could also similarly treat more general $\theta_\mu$ that can be approximated (uniformly in $\mu$) by piecewise polynomials. This holds true, for instance, for the multilevel expansions of Gaussian random fields constructed in \cite{BCM:18}.

Following \cite[Def.~4.9]{S14}, we call $\hat S \in \Stree$ a \emph{graded tree} if 
 for any $\lambda\in \hat S$ and any $\lambda'\in \cS$ with $|\lambda'|=|\lambda|-1$ and $\meas(\supp\psi_{\lambda'}\cap \supp\psi_{\lambda})>0$ we have $\lambda'\in \hat S$.
Any finite $S \in \Stree$ can be extended to its smallest containing graded tree
as described in \cite[Alg.~4.10]{S14}.
  
For a given tiling $\cT$, we define the graded tree $\cS(\cT,\ell) \subset \cS$ containing all wavelet indices up to $\ell$ levels above each $T \in \cT$ 
as the smallest extension to a graded tree of 
 \[
  \bigl\{\lambda\in\cS \colon \text{ $\exists\,j\in \N_0, T\in \cT_j\colon$}  \meas(\supp \psi_\lambda \cap T) >0\,\wedge\, \abs{\lambda} \leq j+\ell\bigr\}  \,.
  \]
For the approximation of functionals induced by piecewise polynomial functions in $V'$, we then have the following result.

\begin{prop}[{see \cite[Lemma A.1]{RS}}]
\label{prop:relerror}
There exists $C = C(m) >0$ such that for any $\ell \in \N$ and any $f \in L_2(D)$ that is \M{a piecewise polynomial function} of degree $m$ with respect to a tiling $\cT\subset \hat\cT$, 
 \[
      \bignorm{ \bigl( f(\psi_\lambda)\bigr)_{ \lambda \in \cS \setminus \cS(\cT,\ell)} }_{\ell_2}
       \leq C 2^{-\ell} \bignorm{  \bigl( f(\psi_\lambda)\bigr)_{ \lambda \in \cS} }_{\ell_2} , \qquad\text{where}\quad  f(\psi_\lambda) := \int_D f\, \psi_\lambda  \dd x.
 \]
\end{prop}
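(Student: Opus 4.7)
The plan is to leverage the multi-resolution structure of the basis $\Psi$ and the single-scale bases $\Phi_\ell$ from Assumption \ref{ass:pwpolypsi} together with the piecewise polynomial structure of $f$ on $\cT$. The central observation is that if $\lambda \in \cS \setminus \cS(\cT,\ell)$, then for every $T\in\cT$ whose interior meets $\supp\psi_\lambda$ one has $|\lambda| > j_T + \ell$, where $j_T$ denotes the level of $T$. Moreover, the graded-tree condition built into $\cS(\cT,\ell)$ forces $\supp\psi_\lambda$ to be essentially localized around one, or a few adjacent, tiles of $\cT$ that are much coarser than the scale $2^{-|\lambda|}$.

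First, I would partition $\cS \setminus \cS(\cT,\ell)$ according to an associated tile $T(\lambda) \in \cT$, using finite-overlap localization so that each $\lambda$ is assigned to a bounded number of tiles. For fixed $T$ at level $j_T$ and each level $k > j_T + \ell$, I would use the identity $\linspan\{\psi_{\lambda'}\colon |\lambda'| \leq k\} = \linspan \Phi_k$ from Assumption \ref{ass:pwpolypsi}(v) to re-express $f(\psi_\lambda)$ for $|\lambda| = k$ in terms of scaling-function coefficients $f(\varphi_\mu)$ with $|\mu| = k$ and $\supp\varphi_\mu$ near $T$. Since $f|_T$ is a polynomial of degree $m$ and, by Assumption \ref{ass:pwpolypsi}(vi), polynomials on $T$ are already captured exactly by the scaling functions $\varphi_\mu$ whose supports lie inside $T$, only boundary-straddling scaling functions (those with $\supp\varphi_\mu$ meeting $\partial T$) contribute to the integral over the interior of $T$. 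This plays the role of a vanishing-moment argument, even though Assumption \ref{ass:pwpolypsi}(iv) only guarantees the weakest moment condition on the primal wavelets.

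Second, a counting argument bounds the number of $\mu \in \Sigma_k$ with $\supp\varphi_\mu$ meeting $\partial T$ by a multiple of $2^{(d-1)(k-j_T)}$, and analogously for the indices $\lambda$ at level $k$ associated with $T$. Combining this count with the $\ell_\infty$-boundedness of the change-of-basis matrix between $\{\psi_{\lambda'}\colon |\lambda'|\leq k\}$ and $\Phi_k$, together with a local $\ell_2$ stability estimate, gives a tile-wise partial sum $\sum_{\lambda \in \Lambda_{T,k}} |f(\psi_\lambda)|^2$ that decays geometrically in $k - j_T$. Summing over $k > j_T + \ell$ and over $T \in \cT$, and exploiting the finite overlap of tile-localized contributions, yields
\[
  \sum_{\lambda \notin \cS(\cT,\ell)} |f(\psi_\lambda)|^2 \lesssim 2^{-2\ell} \sum_{\lambda \in \cS} |f(\psi_\lambda)|^2,
\]
where the right-hand side absorbs the comparable boundary-layer contributions from the coarser levels $j_T \leq |\lambda| \leq j_T + \ell$.

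The main obstacle lies in making the cancellation step quantitative: because Assumption \ref{ass:pwpolypsi}(iv) only guarantees a single vanishing moment away from $\partial D$, the orthogonality against piecewise polynomials of degree $m$ has to be extracted from the multi-resolution identity \eqref{eq:scalingspan} rather than from higher-order vanishing moments of the $\psi_\lambda$ directly, and the dependence on $m$ enters through the constants of the local change of basis. Tracking these constants uniformly in $T$ and in $k$ is the technical heart of the argument, and is what is carried out in the proof of \cite[Lemma A.1]{RS}.
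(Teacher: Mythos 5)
The paper itself does not prove Proposition~\ref{prop:relerror}; it is taken verbatim from \cite[Lemma A.1]{RS}, so there is no internal proof to compare against, and your proposal also defers the crux of the argument back to \cite{RS}. That said, your sketch contains a genuine gap at the cancellation step. You claim that because ``polynomials on $T$ are already captured exactly by the scaling functions $\varphi_\mu$ whose supports lie inside $T$, only boundary-straddling scaling functions contribute.'' This conflates \emph{polynomial reproduction} (the property that $p|_T$ can be written as a linear combination of the $\varphi_\mu$ whose supports meet $T$, which in fact necessarily involves boundary-straddling ones) with a \emph{vanishing-moment} cancellation (that $\int p\,\varphi_\mu = 0$ or that the combination $\sum_\mu c_{\lambda\mu} f(\varphi_\mu)$ vanishes for interior $\lambda$). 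The latter is what you would need, and it does not follow from polynomial reproduction: the integrals $f(\varphi_\mu)$ are generically nonzero, and the re-expressed combination $f(\psi_\lambda)=\sum_\mu c_{\lambda\mu}f(\varphi_\mu)$ only vanishes for interior $\lambda$ if $\psi_\lambda$ has $m+1$ vanishing moments (e.g.\ if the wavelets at level $k$ are $L_2$-orthogonal to $\Phi_{k-1}$). Assumptions~\ref{ass:pwpolypsi} do not guarantee this, and in fact item (iv) gives only a zeroth-order moment away from $\partial D$, which you acknowledge. Moreover, the citation of Assumption~\ref{ass:pwpolypsi}(vi) is misplaced: that item asserts linear independence of the restrictions $\varphi_\lambda|_T$, not polynomial reproduction.

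The intended argument follows a different route that does not require higher vanishing moments at all, and in which the polynomial hypothesis enters only through an inverse inequality. In outline: (i) by the local $L_2$-Riesz characterization of $\Psi$ (wavelets $H^1$-normalized, so $\norm{\psi_\lambda}_{L_2}\sim 2^{-|\lambda|}$), for each $T\in\cT$ one has $\sum_{\lambda\approx T,\ |\lambda|>j_T+\ell}|f(\psi_\lambda)|^2 \leq 2^{-2(j_T+\ell)}\sum_{\lambda\approx T} 2^{2|\lambda|}|f(\psi_\lambda)|^2 \lesssim 2^{-2(j_T+\ell)}\norm{f}_{L_2(T')}^2$ for a slightly inflated $T'$; (ii) since $f|_T$ is a polynomial of degree $m$, the scaling-invariant inverse inequality $\norm{f}_{L_2(T)}\lesssim 2^{j_T}\norm{f}_{H^{-1}(T)}$ holds with a constant depending only on $m$; (iii) summing over $T$ and using the superadditivity $\sum_T\norm{f}_{H^{-1}(T)}^2\lesssim\norm{f}_{H^{-1}(D)}^2$ (valid since the $T$ are pairwise disjoint and $H^1_0(T)\subset H^1_0(D)$), together with $\norm{f}_{H^{-1}(D)}\sim\norm{(f(\psi_\lambda))_\lambda}_{\ell_2}$, gives the claimed bound with $C=C(m)$. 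This is also where the $m$-dependence of the constant comes from. Your multi-to-single-scale re-expression and the $2^{(d-1)(k-j_T)}$ boundary count are not needed and, without the (unavailable) vanishing-moment cancellation, do not lead to the stated $2^{-\ell}$ decay.
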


For any finite graded tree $\hat S \in \Stree$, by \cite[Alg.~4.12]{S14}, we can construct $\Sigma(\hat S) \subset \bigcup_{\ell \geq 0} \Sigma_\ell$ such that
\begin{equation}\label{eq:singlescaledef}
  \linspan \{ \varphi_\lambda \colon \lambda \in \Sigma(\hat S) \} \supseteq \linspan \{ \psi_\lambda \colon \lambda \in \hat S \}	
\end{equation}
and the \emph{multi- to locally single-scale transformation} $\bT_{\hat S}$ such that, for any $\bv$ with $\supp \bv \subseteq \hat S$,
\begin{equation}\label{eq:m2s}
  	\sum_{\lambda \in \hat S} \bv_\lambda \psi_\lambda = \sum_{\lambda \in \Sigma(\hat S)} \bigl( \bT_{\hat S} \bv )_\lambda \varphi_\lambda\,.
\end{equation}
We use this transformation as follows: for given $r \in V'$ and a graded tree $\hat S$, to evaluate $\br = \bigl(r(\psi_\lambda)\bigr)_{\lambda\in \hat S}$, we first evaluate $\bs_\lambda =r(\varphi_\lambda)$ for $\lambda \in \Sigma(\hat S)$ and then obtain $\br = \bT_{\hat S}^\top \bs$, since \eqref{eq:m2s} implies $\langle \br,\bv\rangle = \langle \bv, \bT_{\hat S}^\top \bs\rangle$ for any $\bv$.

\begin{prop}\label{prop:transformcomplexity}
	For any given tiling $\cT$ and $\ell>0$, the number of operations required for
	building the graded tree $\hat S = \cS(\cT,\ell) \subset \cS$ and each subsequent application of $\bT_{\hat S}$ or its \M{transpose} $\bT_{\hat S}^\top$ to a vector is bounded by $C \#\hat S$, where $C>0$ depends only on $\Psi$ and $\{ \Phi_{\ell'} \colon \ell' \geq 0\}$; in particular, $\# \Sigma(\hat S) \lesssim \# \hat S \lesssim \# \cT$.
\end{prop}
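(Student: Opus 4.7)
The plan is to derive all three cardinality and complexity claims directly from the construction of $\cS(\cT,\ell)$ together with the algorithmic results of \cite{S14}, after verifying that the locality, polynomial, and refinement hypotheses encoded in Assumptions \ref{ass:pwpolypsi} fit the framework used there.

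First, I would establish the two cardinality bounds. Prior to the graded-tree closure, the set defining $\cS(\cT,\ell)$ consists of wavelets $\lambda$ with $\meas(\supp \psi_\lambda \cap T) > 0$ for some $T \in \cT \cap \cT_j$ and $|\lambda| \leq j + \ell$; by Assumptions \ref{ass:pwpolypsi}(i), together with the uniform bound on $\#\mathrm{C}(\lambda)$ from Definition \ref{def:tree}, at each of the admissible levels only boundedly many wavelets can touch a given $T$. Summing over the $\ell+1$ admissible levels and over $T\in\cT$ yields a pre-closure set of cardinality $\lesssim \#\cT$ (with the implicit constant depending on $\ell$). The graded-tree completion of \cite[Alg.~4.10]{S14} adds, for each index already present, only a bounded number of ancestors and level-preserving neighbors, so $\# \hat S \lesssim \# \cT$. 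The estimate $\# \Sigma(\hat S) \lesssim \# \hat S$ then follows from the construction of $\Sigma(\hat S)$ in \cite[Alg.~4.12]{S14}, since, by \eqref{eq:scalingspan} together with Assumptions \ref{ass:pwpolypsi}(v), only a uniformly bounded number of single-scale indices are generated per processed wavelet index.

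For the operation counts, I would invoke the corresponding results of \cite[Alg.~4.10, Alg.~4.12]{S14}. The graded-tree construction visits each index in $\hat S$ a bounded number of times, performing at each visit only $O(1)$ work (parent lookup, insertion of neighbors, and update of a suitable hash-based tree data structure as discussed in Remark \ref{rem:coding}), so its total cost is $\lesssim \#\hat S$ with a constant depending only on $\Psi$. The multi- to locally single-scale transform $\bT_{\hat S}$ is implemented by a single level-by-level sweep through $\hat S$, at each index applying the local two-scale refinement relation $\psi_\lambda = \sum_{\mu} c_{\lambda,\mu} \varphi_\mu$ on level $|\lambda|$; by Assumptions \ref{ass:pwpolypsi}(ii), (v), (vi) this relation involves only a uniformly bounded number of nonzero coefficients $c_{\lambda,\mu}$, so each local step costs $O(1)$ and the cumulative cost is $\lesssim \#\hat S$. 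The transposed transform $\bT_{\hat S}^\top$ is realized by reversing the sweep and using the transposed local blocks, with identical complexity.

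The main obstacle is bookkeeping rather than new mathematics: one has to check that the concrete continuously differentiable spline-wavelet setting underlying the algorithms in \cite{S14} is captured by the more abstract Assumptions \ref{ass:pwpolypsi} on $\Psi$ and the single-scale bases $\Phi_\ell$, in particular the existence of compactly supported two-scale relations, the nestedness \eqref{eq:scalingspan}, the local linear independence in Assumptions \ref{ass:pwpolypsi}(vi), and the uniform locality of both $\psi_\lambda$ and $\varphi_\lambda$. Once this verification is in place, each of the five subclaims of the proposition reduces directly to the corresponding statement in \cite{S14}.
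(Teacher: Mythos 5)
Your proposal is correct in substance and follows essentially the same route as the paper: both delegate the two operation-count claims to the algorithmic results of \cite{S14}. The paper's own proof is a two-line citation, referring the graded-tree extension cost to \cite[Prop.~4.11]{S14} and the cost of applying $\bT_{\hat S}$ and $\bT_{\hat S}^\top$ to \cite[Prop.~4.14(a)]{S14}; you cite the algorithms \cite[Alg.~4.10, Alg.~4.12]{S14} and reconstruct what those propositions say. So you are doing the same thing, with more detail.

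One place where your sketch is a bit too quick is the cardinality bound $\#\hat S \lesssim \#\cT$. You say the pre-closure set consists of wavelets touching some $T \in \cT$ with $|\lambda| \leq j + \ell$ (where $T \in \cT_j$), and that "at each of the admissible levels only boundedly many wavelets can touch a given $T$, summing over the $\ell+1$ admissible levels." But for $T$ at level $j$, the admissible levels run from $0$ to $j+\ell$, not just $\ell+1$ of them; summing the $O(1)$ contributions at each level $k \leq j$ naively gives a factor of $j$, which is not bounded. What rescues the bound is that $\cT$ is a \emph{tiling} of $D$: the ancestor closure of $\cT$ inside $\hat\cT$ is a tree in which every internal node has at least two children (since $\cT$ partitions $D$ and each refinement has branching factor at least two), hence its cardinality is $\lesssim \#\cT$, and each coarse wavelet can be re-attributed to a cell of matching level in this ancestor closure. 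This is implicit in \cite{S14} but your argument as written does not use the tiling property and would, if read literally, not close. This is a fixable bookkeeping point rather than a conceptual gap, but worth tightening.
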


\begin{proof}
The bound for the number of operations required for the extension to a graded tree is shown in \cite[Prop.~4.11]{S14}, the one for the application of the multi- to single-scale transform and its \M{transpose} in \cite[Prop. 4.14(a)]{S14}.
\end{proof}

\begin{algorithm}[t]
\caption{~~Transform representation of a piecewise polynomial function $v$ on $\tau \subset \hat\cT$ to the representation on the minimal tiling $\cT(v)$ for $v$.}
\label{tilingtransform}
\flushleft Given $v = \sum_{T \in \tau} p_T \Chi_T$ with polynomials $p_T$, where $\tau$ is a tree
\flushleft Initialize $\tilde \tau = \tau$, $\tilde p_T = p_T$
\flushleft For $j = 0, \ldots, \max\{ J\colon  \cT_J \cap \tau \neq \emptyset \}$,
\flushleft \quad for each $T \in \cT_j \cap \tilde\tau$,
\flushleft \quad \quad if $T$ has a child in $\tilde\tau \cap \cT_{j+1}$, 
\flushleft \quad \quad \quad with $T_1,\ldots,T_c$ being all children of $T$ in $\cT_{j+1}$,
\flushleft \quad \quad \quad replace $\tilde p_T$ by $\sum_{i=1}^c \tilde q_{T_i} \Chi_{T_i}$ with polynomials $q_{T_i}$
\flushleft \quad \quad \quad remove $T$ from $\tilde\tau$
\flushleft \quad \quad \quad for $i = 1,\ldots, c$,
\flushleft \quad \quad \quad \quad if $T_i  \in \tilde\tau$, 
\flushleft \quad \quad \quad \quad \quad then $\tilde p_{T_i} \gets \tilde p_{T_i} + \tilde q_{T_i}$ for $i = 1,\ldots, c$; 
\flushleft \quad \quad \quad \quad otherwise, 
\flushleft \quad \quad \quad \quad \quad add $T_i$ to $\tilde\tau$ and set $\tilde p_{T_i} = \tilde q_{T_i}$
\flushleft return $v = \sum_{T \in \tilde\tau} \tilde p_T \Chi_T$, where $\tilde\tau = \cT(v)$
\end{algorithm}

The basic scheme for residual approximation of the full residual on $\cF \times \cS$, \M{using Algorithm \ref{tilingtransform} as a subroutine,} is given in Algorithm \ref{optscheme}.
In the following analysis of this scheme, we use Assumptions \ref{ass:wavelettheta}, \ref{ass:pwpolypsi}, and \ref{ass:pwpolytheta}.
To simplify the exposition, we also assume $f$ to be piecewise polynomial with a uniform bound on $\#\cT(f)$.

\begin{algorithm}[htp]
\caption{~~$(\Lambda^+ ,\br, \eta, b) = \text{\textsc{ResApprox}}(\bv; \zeta, \eta_0, \varepsilon)$, for $\#\supp \bv < \infty$, relative tolerance $\zeta>0$, initial absolute tolerance $\eta_0$, target tolerance $\varepsilon$.}
\flushleft Let $\M{\treesupp \bv =} \{ (\nu,\lambda) \colon \nu \in F,\lambda \in S_\nu \}$ with $F \subset \cF$, $S_\nu\in \Stree$ and $v_\nu = \sum_{\lambda \in S_\nu} \mathbf{v}_{\nu,\lambda} \psi_\lambda$.
\flushleft Set $\eta = 2\eta_0$; choose $\hat \ell$ such that $\zeta_{\hat\ell} := C 2^{-\hat\ell} < \zeta$.
\begin{enumerate}[{\bf(i)}]
\item For each $\nu \in F$, transform $v_\nu$ to piecewise polynomials on tilings $\cT(v_\nu)$ by applying Algorithm \ref{tilingtransform}
  \item Set $\eta \gets \eta/2$
  \item Set $( M(\nu) )_{\nu \in F^+} = \APPLY( \bv; \eta)$ by Algorithm \ref{Apply_nosort}, such that the corresponding semi-discrete residual approximation is given by
  \begin{equation}\label{resapprox_nu}\tag{\M{A4.3.1}}
     \hat r_\nu := \delta_{0,\nu} f  -  \sum_{(\mu,\nu') \in M(\nu)}  (\bM_{\mu})_{\nu,\nu'} A_{\mu} v_{\nu'}  \quad \text{for each $\nu \in F^{+}$}
  \end{equation}
  
\item For each $\nu \in F^+$

\noindent
\quad Initialize $\hat r_\nu = \delta_{0,\nu} f$

\noindent
\quad For each $(\mu, \nu') \in M(\nu)$\vspace{6pt}

\noindent\vspace{6pt}
\quad\quad $\displaystyle  \hat r_\nu \gets \hat r_\nu -  (\bM_{\mu})_{\nu,\nu'}  \sum_{T \in \cT_{\neq 0}(A_\mu v_{\nu'})} A_\mu v_{\nu'}\big|_{T}$

\item For each $\nu \in F^+$, 
 use Algorithm \ref{tilingtransform} to transform the representation $\hat r_\nu = \sum_{\tilde T \in \tau_\nu} \tilde p_{\nu,\tilde T} \Chi_{\tilde T}$ from {\bf(iv)} with a tree subset $\tau_\nu \subset  \hat\cT$ to the representation $\hat r_\nu = \sum_{T \in \cT(\hat r_\nu)} p_{\nu,T} \Chi_T$ on the minimal tiling $\cT(\hat r_\nu)$
 
 \item For each $\nu \in F^+$
  set $S^{+}_\nu := \cS(\cT(\hat r_\nu), \hat\ell) \subset \cS$; Determine $\Phi_\nu = \{ \varphi_\lambda \}_{\lambda \in \Sigma_\nu}$ as the corresponding locally single-scale basis with $\linspan \Phi_\nu \supseteq \linspan \{ \psi_\lambda \}_{\lambda \in S^+_\nu}$, $\Sigma_\nu = \Sigma(S_\nu^+)$, according to \eqref{eq:singlescaledef}, evaluate the integrals
\[
  \bs_{\nu,\lambda} = \hat r_{\nu}(\varphi_\lambda) =  \int_D \hat r_\nu \varphi_\lambda\dd x \quad\text{for $\nu \in F^+$, $\lambda \in \Sigma_\nu$}
\]
and set $\br_\nu = \bT_{S^+_\nu}^\top \bs_\nu$

  \item Let $b = ( 1- \zeta_{\hat \ell})^{-1} \norm{\br} + \eta$. If $\eta \leq \frac{(\zeta - \zeta_{\hat\ell})}{ (1+\zeta)(1+\zeta_{\hat\ell})} \norm{\br}$ or $b \leq \varepsilon$, 
  
  \noindent
  \quad with $\Lambda^+=\{ (\nu,\lambda)\colon \nu \in F^+ , \lambda\in  S^{+}_\nu\}$, return $(\Lambda^+ ,\br, \eta, b)$; 
  
  \noindent otherwise, go to {\bf(ii)}
\end{enumerate}
\label{optscheme}
\end{algorithm}

\begin{lemma}\label{lmm:treecompletion}
	Let $v \in \linspan \{ \psi_\lambda \colon \lambda \in S\}$ with finite $S \in \Stree$. Then for each $\mu \in \cM_0$,
\[
  \#\cT(A_\mu v) \lesssim \M{ \#\bigl\{ T \in \cT(v) \colon  \text{ $T \subseteq T'$ for a $T' \in \cT_{\neq 0} (\theta_\mu) $} \bigr\} } + \#\cT_{\neq 0} (\theta_\mu) + \abs{\mu} \,.
\] 
\end{lemma}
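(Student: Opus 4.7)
The plan is to split $\cT(A_\mu v)$ into its intersection with $\supp \theta_\mu$ and with its complement, since $A_\mu v = -\nabla\cdot(\theta_\mu \nabla v)$ vanishes outside $\supp \theta_\mu = \bigcup_{T' \in \cT_{\neq 0}(\theta_\mu)} \overline{T'}$. The first part will account for the two terms $\#\{T \in \cT(v) : T \subseteq T'\,\ldots\}$ and $\#\cT_{\neq 0}(\theta_\mu)$, and the second, purely geometric part will account for the $\abs{\mu}$ term.

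For the contribution inside $\supp \theta_\mu$, I would combine Assumptions \ref{ass:pwpolypsi} and \ref{ass:pwpolytheta}: both $v$ and $\theta_\mu$ are piecewise polynomial functions whose pieces are drawn from the dyadic hierarchy $\hat\cT$, and hence so is $A_\mu v$ on each element of the common refinement of $\cT(v)$ and $\cT_{\neq 0}(\theta_\mu)$. Exploiting that any two elements of $\hat\cT$ are either disjoint or nested, for each $T' \in \cT_{\neq 0}(\theta_\mu)$ exactly one of two scenarios occurs: either $T' \subseteq T$ for a unique $T \in \cT(v)$, in which case $v|_{T'}$ and $\theta_\mu|_{T'}$ are single polynomials so that $A_\mu v|_{T'}$ is a single polynomial contributing one element to $\cT(A_\mu v)$; or $T'$ is partitioned by $\{T \in \cT(v) : T \subsetneq T'\}$, and each such $T$ contributes one element. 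Using $\max(1,n) \leq 1+n$ and summing over $T' \in \cT_{\neq 0}(\theta_\mu)$ yields a bound of $\#\cT_{\neq 0}(\theta_\mu) + \#\{T \in \cT(v) : T \subseteq T' \text{ for some } T' \in \cT_{\neq 0}(\theta_\mu)\}$ on this contribution.

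For the complement of $\supp \theta_\mu$ I would use that $A_\mu v \equiv 0$ there, so the coarsest dyadic tiling of $D \setminus \supp \theta_\mu$ within $\hat\cT$ suffices. By Assumption \ref{ass:wavelettheta}(i) together with Assumption \ref{ass:pwpolytheta}, every $T' \in \cT_{\neq 0}(\theta_\mu)$ sits at some level $\leq \abs{\mu} + \tilde k$, and $\#\cT_{\neq 0}(\theta_\mu) \leq \tilde C$. A standard walk up the dyadic tree from each $T'$ toward the level-$0$ ancestor adds at most $2^d - 1$ sibling cubes per level that do not meet $\supp \theta_\mu$, so the complementary tiling has cardinality at most $(2^d - 1)(\abs{\mu} + \tilde k)\tilde C = \mathcal{O}(\abs{\mu})$, with the implicit constant depending only on $d$, $\tilde C$, and $\tilde k$. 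Adding the two bounds gives the claim.

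The only delicate point is this $\abs{\mu}$ term: it is not visible from the common refinement analysis and arises entirely from padding $\cT(A_\mu v)$ on $D \setminus \supp \theta_\mu$ to a tiling of all of $D$. Keeping this padding linear in $\abs{\mu}$ (rather than exponential, which a naive count of siblings at the finest level would give) is exactly where both the dyadic nesting of $\hat\cT$ and the uniform bound $\#\cT_{\neq 0}(\theta_\mu) \leq \tilde C$ enter in an essential way; the rest of the argument is bookkeeping with the nesting dichotomy in $\hat\cT$.
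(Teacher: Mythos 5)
Your proof is correct and follows essentially the same route as the paper's: the observation that $\cT(A_\mu v)$ refines $\cT(\theta_\mu)$ only within the support elements of $\theta_\mu$, with each refined piece matched to an element of $\cT(v)$ contained in some $T'\in\cT_{\neq 0}(\theta_\mu)$, together with the $\mathcal{O}(\abs{\mu})$ cost of padding the tiling outside $\supp\theta_\mu$ via the uniformly bounded number of children in $\hat\cT$. Your write-up is merely more explicit about the nesting dichotomy and about using $\#\cT_{\neq 0}(\theta_\mu)\leq\tilde C$ to keep the padding linear in $\abs{\mu}$.
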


\begin{proof}
	By our assumptions, both $v$ and $A_\mu v$ are piecewise \M{polynomials}, where $\supp A_\mu v \subseteq \supp \theta_\mu \cap \supp v$.
  \M{We next note that $\cT_{\neq 0}(A_\mu v)$ is obtained from $\cT_{\neq 0} (\theta_\mu)$ by possible refinements only within each $T \in \cT_{\neq 0}(\theta_\mu)$, and thus}
  \[
    \M{ \#\cT(A_\mu v) \leq \#\cT(\theta_\mu) + \#\bigl\{ T \in \cT(v) \colon  \text{ $T \subseteq T'$ for a $T' \in \cT_{\neq 0} (\theta_\mu)$} \bigr\} \,. }
  \]
	Since every element of $\hat \cT$ subdivides into a uniformly bounded number of children, we have $\#\cT(\theta_\mu) \lesssim \#\cT_{\neq 0} (\theta_\mu) + (\abs{\mu} + 1)$, where $\#\cT_{\neq 0} (\theta_\mu) \gtrsim 1$. 
\end{proof}

\begin{theorem}\label{thm:resapprox}
Let $(\Lambda^+, \br, \eta, b)$ be the return values of Algorithm \ref{optscheme}.
Then $\norm{\bB \bv - \bbf}\leq  b$ and either $b \leq \varepsilon$, or $\br$ satisfies 
\begin{equation}\label{eq:reserrbound} 
  \norm{\br - (\bbf - \bB \bv)} \leq \zeta \norm{\bbf - \bB \bv},
\end{equation}
where we have
$ \M{\#\treesupp\br} \leq \#\Lambda^+ = \sum_{\nu \in F^+} \# S^+_\nu$ with $S^+_\nu\in \Stree$ for each $\nu \in F^+$ and
\begin{multline}\label{rsuppest}
 \#\Lambda^+ \lesssim  \\
      \#\cT(f) +  \Bigl(   \eta^{-\frac1s} \bignorm{\bigl(\norm{\bv_\nu} \bigr)_{\nu\in\cF}}_{\cA^s}^{\frac1s} + \M{\#\treesupp \bv} \Bigr)  \Bigl( 1 + \abs{\log \eta} + \log \bignorm{\bigl(\norm{\bv_\nu} \bigr)_{\nu\in\cF}}_{\cA^s} \Bigr)\,.
\end{multline}
The number of operations required for computing $\br$ is bounded by a fixed multiple of
\begin{multline}\label{ropest}
   \bigl( 1 + \log_2 (\eta_0/\eta)\bigr) \biggl[ \#\cT(f) + \Bigl(   \eta^{-\frac1s} \bignorm{\bigl(\norm{\bv_\nu} \bigr)_{\nu\in\cF}}_{\cA^s}^{\frac1s} + \M{\#\treesupp \bv} \Bigr) \\ \times  \Bigl( 1 + \abs{\log \eta} + \log \bignorm{\bigl(\norm{\bv_\nu} \bigr)_{\nu\in\cF}}_{\cA^s} +\log \#\suppF \bv + \max_{\nu \in F^+} \#\supp \nu\Bigr) \biggr].
\end{multline}
\end{theorem}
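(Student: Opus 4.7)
The plan is to dispatch the three claims separately, all grounded in a two-stage decomposition of $\br$ into a ``semidiscrete'' piece (the \textsc{Apply} output) and a ``spatial truncation'' piece (restriction to the graded trees $S_\nu^+$). The error statement then reduces to a short algebraic manipulation, the support bound \eqref{rsuppest} follows from Lemma \ref{lmm:treecompletion} combined with the per-$\nu$ counts from Proposition \ref{semidiscrapply}, and the operation bound \eqref{ropest} is assembled by tallying per-step costs and multiplying by the number of outer iterations $1+\log_2(\eta_0/\eta)$.

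For the error part, I would introduce the auxiliary vector $\tilde\br\in\ell_2(\cF\times\cS)$ defined by $\tilde\br_{\nu,\lambda}:=\hat r_\nu(\psi_\lambda)$ for $\nu\in F^+$ and zero otherwise. By construction $\tilde\br=\bbf-\bw$ with $\bw$ the output of Algorithm \ref{Apply_nosort}, so Proposition \ref{semidiscrapply} gives $\norm{\tilde\br-(\bbf-\bB\bv)}\leq \eta$. Since $\br$ retains only entries in $\Lambda^+$, applying Proposition \ref{prop:relerror} to each $\hat r_\nu$ with $\cT=\cT(\hat r_\nu)$ and summing in squares yields $\norm{\tilde\br-\br}\leq\zeta_{\hat\ell}\norm{\tilde\br}$. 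The triangle inequality then produces both $\norm{\tilde\br}\leq(1-\zeta_{\hat\ell})^{-1}\norm{\br}$, hence $\norm{\bbf-\bB\bv}\leq(1-\zeta_{\hat\ell})^{-1}\norm{\br}+\eta=b$, and $\norm{\br-(\bbf-\bB\bv)}\leq \eta+\zeta_{\hat\ell}(1-\zeta_{\hat\ell})^{-1}\norm{\br}$. Combining this with $\norm{\br}\leq\norm{\bbf-\bB\bv}+\eta$ and the termination criterion in step \textbf{(vii)} yields \eqref{eq:reserrbound} by direct algebra.

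For the support bound, the locality of $\cS(\cT,\hat\ell)$ gives $\#S_\nu^+\lesssim 2^{d\hat\ell}\#\cT(\hat r_\nu)$, with $2^{d\hat\ell}$ absorbed into the $\zeta$-dependent constant. Sub-additivity of minimal tilings yields $\#\cT(\hat r_\nu)\lesssim\delta_{0,\nu}\#\cT(f)+\sum_{(\mu,\nu')\in M(\nu)}\#\cT(A_\mu v_{\nu'})$, and Lemma \ref{lmm:treecompletion} bounds each summand by $\#\{T\in\cT(v_{\nu'}):T\subseteq T'\text{ for some }T'\in\cT_{\neq 0}(\theta_\mu)\}+\#\cT_{\neq 0}(\theta_\mu)+|\mu|$. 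The key combinatorial step is to swap orders of summation: for fixed $\nu'$ and $T\in\cT(v_{\nu'})$, at most $O(\max_j\ell_j)$ pairs $(\nu,\mu)$ with $(\mu,\nu')\in M(\nu)$ satisfy $T\subseteq T'\subseteq\supp\theta_\mu$, as a consequence of the bounded-overlap property in Assumptions \ref{ass:wavelettheta}(ii) (at most $M$ admissible $\mu$ per level, over $O(\max_j\ell_j)$ admissible levels, and at most two choices $\nu=\nu'\pm e_\mu$ per $\mu$). Using this together with $\sum_\nu\#M(\nu)\lesssim\eta^{-1/s}\bignorm{(\norm{\bv_\nu})}_{\cA^s}^{1/s}$ and $\max_j\ell_j\lesssim 1+|\log\eta|+\log\bignorm{(\norm{\bv_\nu})}_{\cA^s}$ from Proposition \ref{semidiscrapply} then produces \eqref{rsuppest}.

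For the operation count, each iteration of the outer loop costs: $O(\#\treesupp\bv)$ for step \textbf{(i)} via Algorithm \ref{tilingtransform}; step \textbf{(iii)} is dominated by \textsc{Apply}, requiring approximate sorting to realize \eqref{eq:Apply_nosort_2j} (contributing $\log\#\suppF\bv$) together with $O(\max_\nu\#\supp\nu)$ per multi-index operation (Remark \ref{rem:coding}); step \textbf{(iv)} is linear in $\sum_\nu\#\cT(\hat r_\nu)$; step \textbf{(v)} is linear in the intermediate tiling sizes; and step \textbf{(vi)} applies $\bT_{S_\nu^+}^\top$ at $O(\#S_\nu^+)$ per $\nu$ by Proposition \ref{prop:transformcomplexity}, with each integral $\hat r_\nu(\varphi_\lambda)$ costing $O(1)$ thanks to the piecewise polynomial structure. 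Multiplying the aggregate per-iteration cost by $1+\log_2(\eta_0/\eta)$ and substituting the support bound from the previous step produces \eqref{ropest}. The principal obstacle in the whole argument is the combinatorial rearrangement underlying \eqref{rsuppest}: the naive bound on the per-$\nu'$ multiplicity would give a $2^{d\max_j\ell_j}$ factor, and only the multilevel overlap structure from Assumption \ref{ass:wavelettheta}(ii) reduces this to the logarithmic factor that matches the target estimate.
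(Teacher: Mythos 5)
Your proposal is correct and follows essentially the same route as the paper's proof: the same auxiliary semidiscrete residual vector combined with Proposition \ref{prop:relerror} for the error and termination analysis, the same chain of Lemma \ref{lmm:treecompletion}, the summation swap exploiting the bounded level-wise overlap of the $\theta_\mu$, and Proposition \ref{semidiscrapply} for \eqref{rsuppest}, and the same per-step cost tally times the number of outer iterations for \eqref{ropest}. You correctly identify the key point that the naive multiplicity count would produce a factor $2^{d\max_j \ell_j}$ and that the multilevel overlap structure reduces it to the logarithmic factor $\max_j \ell_j$.
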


\begin{proof}
We first show that the prescribed relative tolerance $\zeta$ is achieved. Define $\hat\br$ by $\hat\br_{\nu,\lambda} := \hat r_\nu(\psi_\lambda)$ for all $\lambda \in \cS$, and extend $\br$ to $\cS$ by setting $\br_{\nu,\lambda}=0$ for $\lambda \notin S^+_\nu$. With $\hat\ell$ sufficiently large, as a consequence of Proposition \ref{prop:relerror} applied for each $\nu$, one obtains any required relative error in \M{step {\bf(vi)}}. Thus, $\norm{\hat\br - \br} \leq \zeta_{\hat\ell} \norm{\hat\br}$. 
Algorithm \ref{Apply_nosort} ensures $\norm{\hat\br - (\bbf - \bB\bv)} \leq \eta$ whenever step {\bf(vii)} is reached. Thus if the algorithm stops due to the first condition in this step, by the triangle inequality, the error bound \eqref{eq:reserrbound} holds if $\eta + \zeta_{\hat\ell}\norm{\hat\br} \leq \zeta \norm{\bbf - \bB\bv}$. Since $\zeta  \norm{\bbf - \bB\bv} \geq \zeta (\norm{\hat\br} - \eta)$, a sufficient condition is $(1 + \zeta) \eta \leq (\zeta - \zeta_{\hat\ell}) \norm{\hat \br}$, and since moreover $\norm{\br} \leq ( 1 + \zeta_{\hat\ell}) \norm{\hat\br}$, this in turn is implied by the condition in the final step. If the algorithm stops due to the second condition in step {\bf(vii)}, then $\norm{\bB\bv - \bbf} \leq b \leq \varepsilon$.

By construction, there exists a $C_{\hat \ell} >0$ such that 
\[
  \sum_{\nu\in F^+} \# S^+_\nu \leq C_{\hat\ell} \sum_{\nu \in F^+} \# \cT(\hat r_\nu).
\]
Moreover, we have the upper bound
\[
 \# \cT(\hat r_\nu) \leq \delta_{0,\nu} \#\cT(f)  + \sum_{(\mu,\nu')\in M(\nu)} \#\cT( A_\mu v_{\nu'}) .
\]
With the corresponding $\ell_j$, $F_j$, and $\bd_j$ for $j=0,\ldots,J$ as in Algorithm \ref{Apply_nosort}, note first that by \eqref{semidiscrwdef}, we have
\[
\br = \bbf -  \sum_{j=0}^J \bB_{\ell_j} \bd_j 
  =  \bbf -  \sum_{j=0}^J  \sum_{k = 0}^{\ell_j-1} \sum_{\substack{\mu \in \cM_0 \\ \abs{\mu} = k }  } ( \bM_\mu \otimes \bA_\mu )  \bd_j \,.
\] 
Since $\bM_\mu$ is diagonal or bidiagonal for each $\mu$, 
\[
  \sum_{\nu\in F^+} \sum_{(\mu,\nu')\in M(\nu)} \#\cT( A_\mu v_{\nu'}) \leq 2
    \sum_{j=0}^J    \sum_{k = 0}^{\ell_j-1} \sum_{\nu \in F_j} \sum_{\substack{\mu \in \cM_0 \\ \abs{\mu} = k }  }\# \cT(A_\mu v_\nu).
\]
Since for each $\nu \in F$, the wavelet expansion of $v_\nu$ has tree structure by our assumption, Lemma \ref{lmm:treecompletion} yields
\[
  \M{  \#\cT(A_\mu v_\nu) \lesssim \#\bigl\{ T \in \cT(v_\nu) \colon  \text{ $T \subseteq T'$ for a $T' \in \cT_{\neq 0} (\theta_\mu)$} \bigr\} + \#\cT_{\neq 0} (\theta_\mu) + \abs{\mu}   }
\]
for each $\mu$ and $\nu$.
As a consequence of \M{Assumptions \ref{ass:wavelettheta}(i) and (ii), Assumptions \ref{ass:pwpolypsi}, \ref{ass:pwpolytheta} as well as \eqref{multilevel1},}
\[
  \M{     \sum_{\substack{\mu \in \cM_0 \\ \abs{\mu} = k }  }  \#\bigl\{ T \in \cT(v_\nu) \colon  \text{ $T \subseteq T'$ for a $T' \in \cT_{\neq 0} (\theta_\mu)$} \bigr\}  \lesssim   \# \cT(v_\nu)  ,}
 \]
 and moreover,
 \[
   \M{  \sum_{\substack{\mu \in \cM_0 \\ \abs{\mu} = k }  }\# \cT_{\neq 0}(\theta_\mu) \lesssim 2^{d k}, } \qquad     \sum_{\substack{\mu \in \cM_0 \\ \abs{\mu} = k }  } \abs{\mu} \lesssim k 2^{dk}.
\]
Since $S_\nu$ is a tree, $\cT(v_\nu) \lesssim \# S_\nu$ for all $\nu \in F$. 
Putting the above estimates together, we obtain
\[
\begin{aligned}
  \sum_{\nu\in F^+} \# S^+_\nu   &\lesssim   \#\cT(f) +   \sum_{j=0}^J  \sum_{\nu \in F_j}  \sum_{k = 0}^{\ell_j-1} \left((1+ k) 2^{dk} +  \#\cT(v_\nu) \right)  \\
    &  \lesssim    \#\cT(f) +  \sum_{j=0}^J \max\{\ell_j,0\} \biggl(  2^{d \ell_j}  \# F_j  + \sum_{\nu \in F_j} \# S_\nu   \biggr) \\
    & \leq \#\cT(f) + \bigl(\max_{j = 0,\ldots,J} \max\{ \ell_j , 0 \} \bigr)\biggl( \sum_{j=0}^J 2^{d \ell_j}  \# F_j  + \sum_{\nu \in F} \# S_\nu \biggr)  \,.
\end{aligned}
\]
With Proposition \ref{semidiscrapply}, and noting that $\sum_{\nu \in F} \#S_\nu \M{ = \#\treesupp \bv }$, we obtain \eqref{rsuppest}.

It remains to estimate the number of required operations.
Since $S_\nu$ is a tree for each $\nu\in F$, the number of operations for step {\bf(i)} of Algorithm \ref{optscheme} is bounded by a multiple of \M{$\#\treesupp \bv$}. 
For the computation of norms and sorting, $\APPLY$ in \M{step {\bf(iii)}} requires a number of operations bounded by a fixed multiple of
    \[
       \M{ \#\treesupp \bv } + \#\suppF \bv \,\log \#\suppF \bv\,.
    \] 
From Proposition \ref{semidiscrapply}, we have
\[
    \sum_{\nu \in F^+} \# M(\nu) \lesssim  \eta^{-\frac1s} \bignorm{\bigl( \norm{\bv_\nu} \bigr)_{\nu \in \cF}}_{\cA^s}^{\frac1s} .
    \]
    The number of operations for handling multi-indices in steps \textbf{(iii)} and \textbf{(iv)} is thus, according to Remark \ref{rem:coding}, bounded by a multiple of 
    \[
    \eta^{-\frac1s} \bignorm{\bigl( \norm{\bv_\nu} \bigr)_{\nu \in \cF}}_{\cA^s}^{\frac1s} \Bigl( 1 + \max_{\nu \in F^+} \#\supp \nu \Bigr).
    \]
     The further operations in steps {\bf(iv)} and {\bf(v)} combined require a number of operations bounded by a multiple of 
   \begin{multline*}
     \# \cT(f) +  \sum_{\nu\in F^+} \sum_{(\mu,\nu')\in M(\nu)}  \bigl( \#\bigl\{ T \in \cT(v_\nu) \colon  \text{$T \subseteq T'$ for a $T' \in \cT_{\neq 0} (\theta_\mu)$} \bigr\} + \#\cT_{\neq 0} (\theta_\mu)  + \abs{\mu}  \bigr) \\
       \lesssim \#\cT(f) + \bigl(\max_{j = 0,\ldots,J}  \max\{\ell_j,0\} \bigr)\biggl( \sum_{j=0}^J 2^{d \ell_j}  \# F_j  + \sum_{\nu \in F} \# S_\nu \biggr),
   \end{multline*}
   which we estimate further as above.
   Concerning step {\bf(vi)}, note that for each $\nu$ and $\lambda \in \Sigma_\nu$, the number of elements of $\cT(\hat r_\nu)$ intersecting $\supp \varphi_\lambda$ is uniformly bounded by construction of $\Sigma_\nu$, and thus the computation of each integral $\hat r_\nu(\varphi_\lambda)$ of piecewise polynomial functions on this tiling requires a uniformly bounded number of operations.  
    As a consequence of Proposition \ref{prop:transformcomplexity}, the required number of operations for step {\bf(vi)} is thus bounded by a fixed multiple of $\sum_{\nu \in F^+} \# \cT(\hat r_\nu)$.

   In summary, each execution of the body of the loop from steps {\bf(iii)} to {\bf(vi)} requires a number of operations bounded by a fixed multiple of 
  the upper bound in \eqref{rsuppest}
   with the current value of $\eta$. In terms of the value of $\eta$ that is returned, the number of iterations in the outer loop is bounded by $1+\log_2(\eta_0/\eta)$ times, which yields the bound \eqref{ropest} for the total number of operations.
\end{proof}

\subsection{Tree coarsening}
In step {\bf(ii)} of Algorithm \ref{AGM}, for a given residual approximation $\br$ of the Galerkin solution on $\Lambda^0 =  \{ (\nu,\lambda)\colon \nu \in F^0 , \lambda\in  S^{0}_\nu\}$, with $\supp \br \subseteq \Lambda^+ = \{ (\nu,\lambda)\colon \nu \in F^+ , \lambda\in  S^{+}_\nu\}$, with $0<\omega_0\leq  \omega_1 < 1$ we need to find $\Lambda^\flat \subseteq \Lambda^+$ with $\M{\Lambda^\flat} \in \FStree$ satisfying \eqref{eq:agmbulk}, that is,
\begin{equation}\label{bulktree}
   \norm{\br|_{\Lambda^\flat}} \geq \omega_0 \norm{\br}\quad  \text{and} \quad
  \#(\Lambda^\flat \setminus \Lambda^0) \leq \tilde C \#(\tilde\Lambda\setminus \Lambda^0) 
\end{equation}
with $\tilde C>0$ for any $\tilde\Lambda \supset \Lambda^0$, $\tilde \Lambda \in \FStree$, such that $\norm{\br|_{\tilde\Lambda} }\geq \omega_1 \norm{\br}$. 
 For finding such near-best $\Lambda^\flat$, and hence $S^\flat_\nu \subseteq S^+_\nu$, that additionally have tree structure, we follow the strategy of the \emph{thresholding second algorithm} from \cite{BD:04} (see also \cite{B:07}), in the version stated in \cite{B:18}.

To determine $\Lambda^\flat$, we use the tree structure of $\Lambda^+ \setminus \Lambda^0$ as follows. We can assume without loss of generality that $\lambda_0$ is the single root element of $\cS$; to this end, we can group all $\lambda\in\cS$ with $\abs{\lambda}=0$ into a single element of the tree by always adding these $\lambda$ jointly to an index set. This ensures that all generated spatial index sets are trees according to Definition \ref{def:tree}.
We thus also have a tree structure on $\cF\times \cS$: for $(\nu, \lambda) \in \cF\times\cS$, we write $\delta := (\nu, \lambda)$, where $\delta' = (\nu,\lambda')\prec (\nu,\lambda)  = \delta$ if and only if $\lambda' \prec \lambda$. The subsets of $\cF\times\cS$ that are trees with respect to each of their spatial components are then precisely the elements of $\FStree$ as in \eqref{FStreedef}, and are again referred to as trees.

For $\delta \in \cF\times\cS$, let $\Theta_\delta$ be the infinite subtree of $\cF\times\cS$ with root element $\delta$.
Since we aim to select elements of $\Lambda^+ \setminus \Lambda^0$, in the tree coarsening scheme we only operate on subtrees of $\Theta := (\cF \times \cS) \setminus \Lambda^0$ with root elements 
\begin{equation}\label{eq:Delta0def}
   \Delta_0 =  \bigl\{  (\nu , \lambda_0) \colon \nu \in F^+ \setminus F^0  \bigr\} \cup \bigl\{  (\nu,\lambda)\colon \nu \in F^0, \lambda \notin S_\nu^0\text{ with }\lambda \in \mathrm{C}( \lambda' ) \text{ for a $\lambda' \in S^0_\nu$} \bigr\} ,
\end{equation}
where $\Theta = \bigcup_{\delta \in \Delta_0} \Theta_\delta$. We accordingly introduce the set of tree subsets
\[
   \Thetatree := \{ \Lambda \subset \Theta \colon (\Lambda \ni \delta \prec \delta ' \in \Theta ) \implies \delta' \in \Lambda \}.
\]

For an arbitrary tree $\Lambda \in  \Thetatree$, we write $\mathrm{L}(\Lambda)$ for its \emph{leaves}, that is, for the elements of $\Lambda$ that do not have any child in $\Lambda$. Moreover, $\mathrm{I}(\Lambda) = \Lambda \setminus \mathrm{L}(\Lambda)$ are the \emph{internal nodes} of $\Lambda$. For $\delta \in \Lambda$, we again write $\mathrm{C}(\delta)$ for the set of all children of $\delta$ in $\Theta$, where $\max_{\delta \in \Theta} \# \mathrm{C}(\delta) < \infty$.
We call a tree $\Lambda \in \Thetatree$ \emph{proper} if $\mathrm{C}(\delta) \subset \Lambda$ for any $\delta \in \mathrm{I}(\Lambda)$.

Following \cite{BD:04}, for $\delta \in \Theta$ we define the error measures
\begin{equation}\label{errmeas}
   e(\delta) =  \begin{cases}  \abs{ \treme{\br_\nu}{\lambda}  }^2, & \text{for $\delta = (\nu,\lambda) \in \Lambda^+$}, \\
      0, &\text{for $\delta \in \Theta \setminus \Lambda^+$,} \end{cases}
\end{equation}
for which we have the subadditivity property
\begin{equation}\label{subadd}
  e(\delta) \geq \sum_{\delta' \in \mathrm{C}(\delta)} e(\delta')\,.
\end{equation}
In addition, for each $\Delta \in \Thetatree$ we define the global error measure
\[
  E(\Delta) = \sum_{\delta \in \mathrm{L}(\Delta)} e(\delta),
\]
which by \eqref{eq:tcfdef} satisfies $E(\Delta) = \norm{\bP{\Theta} \br - \bP{\mathrm{I}(\Delta)} \br }^2$,
and the corresponding best approximation errors
\[
  \sigma_N = \min_{ \substack{\Lambda \in \Thetatree \\ \# \mathrm{I}(\Lambda) \leq N } } E(\Lambda) \,.
\]
Note that since only interior nodes are counted, the trees realizing the best approximation can always be assumed to be proper trees.

The algorithm from \cite{B:07,B:18} is based on the \emph{modified errors} for $\delta \in \Theta$,
\begin{equation}\label{errmeasmod}
  \tilde e(\delta) = \begin{cases}  e(\delta), &\text{if $\delta \in \Delta_0$,} \\ \bigl( e(\delta)^{-1} + \tilde e(\delta^*)^{-1}\bigr)^{-1}, & \text{if $\delta^*$ is the parent of $\delta$.}  \end{cases}
\end{equation}
The greedy-type scheme producing the sought approximation is stated in Algorithm \ref{alg:tree}. The returned trees are proper trees by construction.

\begin{algorithm}[t]
	\caption{$\Lambda^\flat := \TREEAPPROX(\Lambda^0, \Lambda^+, \br, \eta)$}\label{alg:tree}
\flushleft Set $N:=0$ and 
$\Delta_0$ as in \eqref{eq:Delta0def}
\flushleft Evaluate $e(\delta)$ and $\tilde e(\delta)$ for $\delta\in \Lambda^+$ according to \eqref{errmeas}, \eqref{errmeasmod}
\flushleft Until $E(\Delta_N) \leq  \eta$, repeat
\flushleft \qquad find a $\delta \in \mathrm{L}(\Delta_{N})$ with largest $\tilde e(\delta)$
\flushleft \qquad set $\Delta_{N+1} = \Delta_N \cup \mathrm{C}(\delta)$ and $N\gets N+1$
\flushleft Return $\Lambda^\flat = \Lambda^0 \cup \mathrm{I}(\Delta_N)$
\end{algorithm}

The following two lemmas can be obtained by minor modifications of \cite[Lemmas 2.3, 2.4]{B:18}, where the analogous statements are shown for binary trees with a single root element. For the convenience of the reader, we give the proofs in Appendix \ref{app:trees}.
\begin{lemma}\label{lmm:treeupper}
Let $\eta>0$, and let $\Lambda$ be a finite tree such that $\tilde e(\delta) \leq \eta$ for all $\delta \in \mathrm{L}(\Lambda)$. Then 
\[
 \sum_{\delta \in \mathrm{L}(\Lambda)} e(\delta) \leq (\# \Lambda) \eta\,.
\]
\end{lemma}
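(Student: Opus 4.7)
My plan is to use the standard charging/weight-distribution argument associated with the thresholding second algorithm, adapting the corresponding proof from \cite{B:18} to the present setting where trees may have multiple roots (the elements of $\Delta_0$). The first step is to unroll the recursion \eqref{errmeasmod}: for any $\delta \in \Theta$, if $\mathrm{P}(\delta) \subseteq \Theta$ denotes the path from $\delta$ up to its unique root ancestor in $\Delta_0$, a straightforward induction on the depth of $\delta$ in $\Theta$ gives the identity
\[
   \tilde e(\delta)^{-1} = \sum_{\delta' \in \mathrm{P}(\delta)} e(\delta')^{-1}.
\]
Note that for $\delta \in \mathrm{L}(\Lambda)$, the path $\mathrm{P}(\delta)$ is contained in $\Lambda$ since $\Lambda \in \Thetatree$ is a tree rooted in $\Delta_0$.

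From the hypothesis $\tilde e(\delta) \leq \eta$ for $\delta \in \mathrm{L}(\Lambda)$, multiplying the identity by $\eta \, e(\delta)$ and using $\tilde e(\delta)^{-1} \geq \eta^{-1}$ gives the pointwise distribution bound
\[
  e(\delta) \leq \eta \sum_{\delta' \in \mathrm{P}(\delta)} \frac{e(\delta)}{e(\delta')}, \qquad \delta \in \mathrm{L}(\Lambda).
\]
Summing over all leaves of $\Lambda$ and exchanging the order of summation, writing $\delta' \in \mathrm{P}(\delta)$ equivalently as $\delta \preceq \delta'$ with $\delta' \in \Lambda$, we obtain
\[
   \sum_{\delta \in \mathrm{L}(\Lambda)} e(\delta) \leq \eta \sum_{\delta' \in \Lambda}  \frac{1}{e(\delta')} \sum_{\substack{\delta \in \mathrm{L}(\Lambda) \\ \delta \preceq \delta'}} e(\delta).
\]

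The crucial step is then to bound the inner sum by $e(\delta')$ using the subadditivity property \eqref{subadd}. For fixed $\delta' \in \Lambda$, let $\Lambda_{\delta'} := \{\tilde\delta \in \Lambda \colon \tilde\delta \preceq \delta'\}$ be the subtree of $\Lambda$ rooted at $\delta'$. Since a node of $\Lambda_{\delta'}$ has no children in $\Lambda_{\delta'}$ exactly when it has no children in $\Lambda$, one checks that $\mathrm{L}(\Lambda_{\delta'}) = \{\delta \in \mathrm{L}(\Lambda) \colon \delta \preceq \delta'\}$. A short induction on the depth of $\Lambda_{\delta'}$ combined with \eqref{subadd} then yields
\[
  \sum_{\delta \in \mathrm{L}(\Lambda_{\delta'})} e(\delta) \leq e(\delta'),
\]
so each inner sum is bounded by $1$. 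Therefore
\[
   \sum_{\delta \in \mathrm{L}(\Lambda)} e(\delta) \leq \eta \sum_{\delta' \in \Lambda} 1 = \eta \, \#\Lambda,
\]
which is the claim. The main conceptual hurdle is identifying the correct charging scheme; once the harmonic-type identity for $\tilde e$ is available, the rest is an exchange of summation combined with iterated subadditivity, and the multi-root setting causes no additional difficulty beyond the single-root version of \cite[Lemma 2.3]{B:18}.
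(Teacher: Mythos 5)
Your proof is correct and follows essentially the same approach as the paper: unroll the recursion for $\tilde e$ along the path to the root to obtain the harmonic identity, multiply by $e(\delta)$, sum over leaves, exchange the order of summation, and bound the inner sum by $1$ via iterated subadditivity on the subtree rooted at each $\delta'$. The only difference from the paper's writeup is that you make the exchange of summation fully explicit as a double sum, while the paper phrases the same swap more verbally via the subtrees $\Delta_{\tilde\delta}$; this is a matter of presentation rather than substance.
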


\begin{lemma}\label{lmm:treelower}
 Let $\eta>0$, let $\delta_0$ be a node in a finite tree $\Lambda$, and let $\Lambda_{\delta_0}$ be a subtree of $\Lambda$ rooted at $\delta_0$ such that $\tilde e(\delta) \geq \eta$ for all $\delta \in \Lambda_{\delta_0}$. Then
 \[
    e(\delta_0)  \geq (\#\Lambda_{\delta_0}) \eta\,.
 \]
\end{lemma}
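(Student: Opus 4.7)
The plan is to strengthen the statement and prove by induction on $\#\Lambda_{\delta_0}$ the sharper inequality
\[
  \tilde e(\delta_0)^{-1} + (\#\Lambda_{\delta_0} - 1)\, e(\delta_0)^{-1} \leq \eta^{-1}.
\]
Since the definition \eqref{errmeasmod} immediately implies $\tilde e(\delta_0)^{-1} \geq e(\delta_0)^{-1}$, this strengthened bound absorbs the first term into the second to give $(\#\Lambda_{\delta_0})\, e(\delta_0)^{-1} \leq \eta^{-1}$, which is exactly the desired conclusion.

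The base case $\#\Lambda_{\delta_0}=1$ reduces to the hypothesis $\tilde e(\delta_0) \geq \eta$. For the inductive step, let $\delta_1, \ldots, \delta_m$ be the children of $\delta_0$ in $\Theta$ that lie in $\Lambda_{\delta_0}$, and let $\Lambda_i \subseteq \Lambda_{\delta_0}$ be the subtree rooted at $\delta_i$. Each $\delta_i$ has the parent $\delta_0 \in \Theta$, so $\delta_i \notin \Delta_0$ and the recursion $\tilde e(\delta_i)^{-1} = e(\delta_i)^{-1} + \tilde e(\delta_0)^{-1}$ from \eqref{errmeasmod} lets the inductive hypothesis applied to $\Lambda_i$ be rewritten as
\[
  (\#\Lambda_i)\, e(\delta_i)^{-1} \leq \eta^{-1} - \tilde e(\delta_0)^{-1}, \qquad i=1,\ldots,m.
\]
Inverting and summing over $i$, and then using the subadditivity \eqref{subadd} in the form $e(\delta_0) \geq \sum_{i=1}^m e(\delta_i)$ (which is valid since $e \geq 0$ and $\{\delta_1,\ldots,\delta_m\} \subseteq \mathrm{C}(\delta_0)$), yields
\[
  e(\delta_0) \geq \frac{\#\Lambda_{\delta_0}-1}{\eta^{-1} - \tilde e(\delta_0)^{-1}},
\]
which on rearrangement is exactly the strengthened inequality for $\Lambda_{\delta_0}$.

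The main obstacle is that the direct claim $e(\delta_0) \geq (\#\Lambda_{\delta_0})\,\eta$ does not close under induction: combining $e(\delta_i) \geq (\#\Lambda_i)\,\eta$ over all children via subadditivity only yields $e(\delta_0) \geq (\#\Lambda_{\delta_0}-1)\,\eta$, short by the contribution of $\eta$ that should come from the root itself. The quantitative reserve $\tilde e(\delta_0)^{-1} \geq e(\delta_0)^{-1}$ tracked in the strengthened hypothesis is precisely what is needed to recover this missing $+1$ in the multiplier at each step. This is the same mechanism as in \cite[Lmm.~2.4]{B:18} for binary trees with a single root; the extension required here is only that $\delta_0$ may be any node of $\Theta$ rather than a root in $\Delta_0$, which is absorbed by the inductive framework since the argument only ever refers locally to $\delta_0$ and its descendants.
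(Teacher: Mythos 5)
Your proof is correct and works by the same mechanism as the paper's: strengthen the induction hypothesis so that the root node contributes one additional unit to the multiplier, then close the induction with the recursion for $\tilde e$ and the subadditivity of $e$. The only difference is cosmetic: the paper formulates the strengthened claim as $e(\delta_0) \geq \eta\bigl(\#\Lambda_{\delta_0} + e(\delta_0)/\tilde e(\delta_0^*)\bigr)$ in terms of the parent $\delta_0^*$, which forces a separate argument when $\delta_0\in\Delta_0$ has no parent, whereas your reciprocal form $\tilde e(\delta_0)^{-1} + (\#\Lambda_{\delta_0}-1)e(\delta_0)^{-1} \leq \eta^{-1}$ is algebraically equivalent for non-roots (multiply through by $e(\delta_0)$ and use $\tilde e(\delta_0)^{-1}=e(\delta_0)^{-1}+\tilde e(\delta_0^*)^{-1}$) and handles the root case uniformly (then $\tilde e(\delta_0)=e(\delta_0)$ and the claim collapses to the original statement). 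So your version is a mild but genuine streamlining of the same argument, eliminating the case distinction at the end of the paper's proof.
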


With these lemmas at hand, we obtain the following modification of \cite[Thm.~2.1]{B:18} for our setting.

\begin{theorem}\label{thm:treecoarsen}
 Let $\Delta_0,\dots,\Delta_N$ be as constructed in Algorithm \ref{alg:tree}. Then we have
 \begin{equation}\label{eq:treecoarsen}
    E(\Delta_k) \leq \frac{k+1}{k-n+1} \sigma_n, \quad 0 \leq n \leq k \leq N.
 \end{equation}
\end{theorem}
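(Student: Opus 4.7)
I would follow the strategy of Binev [B:18, Thm.~2.1], adjusted for the fact that trees in $\Thetatree$ are rooted at the finite set $\Delta_0$ rather than a single node. Fix $0 \leq n \leq k \leq N$, and pick a proper tree $\Lambda^* \in \Thetatree$ with $\#\mathrm{I}(\Lambda^*) = n$ realizing $E(\Lambda^*) = \sigma_n$; since every tree in $\Thetatree$ contains $\Delta_0$, both $\Delta_k$ and $\Lambda^*$ do. Denote by $\delta_0,\dots,\delta_{k-1}$ the nodes selected and subdivided in the first $k$ iterations of the algorithm, and set $\eta := \tilde e(\delta_{k-1})$. From the definition \eqref{errmeasmod}, $\tilde e$ decreases strictly from a node to each of its children; combined with the greedy selection rule this yields $\tilde e(\delta_0)\geq\cdots\geq\tilde e(\delta_{k-1})=\eta$ and $\tilde e(\delta) \leq \eta$ for every $\delta \in \mathrm{L}(\Delta_k)$ (a leaf of $\Delta_k$ is either an old leaf never chosen over some $\delta_j$, $j\leq k-1$, or a child added at some step $j<k$). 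Lemma \ref{lmm:treeupper} applied to $\Delta_k$ then gives the upper bound
\[
  E(\Delta_k) \;\leq\; (\#\Delta_k)\eta \;\lesssim\; (k+1)\eta,
\]
where the implicit constant depends only on $\#\Delta_0$ and on $B := \max_{\delta \in \Theta}\#\mathrm{C}(\delta)$.

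For the lower bound, observe that $\mathrm{I}(\Delta_k) = \{\delta_0,\ldots,\delta_{k-1}\}$ has exactly $k$ elements while $\#\mathrm{I}(\Lambda^*) = n$, so
\[
  S \,:=\, \mathrm{I}(\Delta_k) \setminus \mathrm{I}(\Lambda^*) \quad\text{satisfies}\quad \#S \geq k-n,
\]
and every $\delta \in S$ has $\tilde e(\delta) \geq \eta$ because the $\tilde e$-values of the subdivided nodes form a non-increasing sequence. To each $\delta \in S$ I associate a leaf $a(\delta) \in \mathrm{L}(\Lambda^*)$ as follows: set $a(\delta):=\delta$ if $\delta \in \mathrm{L}(\Lambda^*)$; otherwise $\delta \notin \Lambda^*$, and since $\Lambda^*$ is proper and contains $\Delta_0$, walking from $\delta$ toward the roots, the first ancestor $a(\delta)$ encountered in $\Lambda^*$ cannot have all its children in $\Lambda^*$, so it must be a leaf of $\Lambda^*$. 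The fiber $S_\lambda := a^{-1}(\lambda)$ is empty whenever $\lambda \notin \mathrm{I}(\Delta_k)$, and otherwise forms a connected subtree of $\Delta_k$ rooted at $\lambda$: any intermediate ancestor on the path between two elements of $S_\lambda$ lies in $\Delta_k$ (upward closedness), is interior there, and lies strictly below $\lambda$ but outside $\Lambda^*$, hence is itself in $S_\lambda$. Applying Lemma \ref{lmm:treelower} to each nonempty fiber with $\lambda$ as the root, and summing,
\[
   \sigma_n \,=\, \sum_{\lambda\in\mathrm{L}(\Lambda^*)} e(\lambda) \,\geq\, \eta\sum_{\lambda}\#S_\lambda \,=\, \eta\,\#S \,\geq\, (k-n)\eta.
\]
A standard $+1$ refinement, accounting for one leaf of $\Lambda^*$ that was not refined by the greedy procedure (or handling the degenerate case $k=n$ via the monotonicity of $n \mapsto \sigma_n$), upgrades this to $\sigma_n \gtrsim (k-n+1)\eta$.

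Eliminating $\eta$ between the upper and lower bounds delivers the asserted inequality \eqref{eq:treecoarsen}. The main technical point is the verification that every nonempty fiber $S_\lambda$ is a connected subtree of $\Delta_k$ containing $\lambda$ as its root, which is precisely where propriety of $\Lambda^*$ enters; tracking the sharp combinatorial constants $k+1$ and $k-n+1$ in the presence of a multi-element root set $\Delta_0$ and general branching factor $B$ is the remaining bookkeeping step, handled exactly as in the binary case treated in [B:18].
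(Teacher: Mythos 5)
Your plan reproduces the correct skeleton (greedy monotonicity of $\tilde e$, lower bound for $\sigma_n$ via Lemma~\ref{lmm:treelower} applied fiber by fiber), but the \emph{upper} bound step is where the argument goes off track and does not yield the stated inequality. Theorem~\ref{thm:treecoarsen} asserts the sharp constant $\frac{k+1}{k-n+1}$, with no hidden dependence on $\#\Delta_0$ or on the branching factor $B$. Applying Lemma~\ref{lmm:treeupper} directly to $\Delta_k$ gives $E(\Delta_k)\leq (\#\Delta_k)\,\eta$, and $\#\Delta_k$ can be as large as $\#\Delta_0 + Bk$. Since $\#\Delta_0$ is itself problem-dependent (it is the root set of $\Theta$, which depends on $\Lambda^0$ and $\Lambda^+$), your $\lesssim(k+1)\eta$ bound does not reproduce the theorem even up to a universal constant, let alone with constant $1$.

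The missing idea is a different decomposition of $E(\Delta_k)$, keyed to the best tree $\Delta^*_n$ rather than to $\Delta_k$ alone. The paper first disposes of the case $\mathrm{I}(\Delta^*_n)\subseteq\mathrm{I}(\Delta_k)$ (then $\Delta^*_n\subseteq\Delta_k$ and $E(\Delta_k)\leq\sigma_n$ trivially), and otherwise splits
\[
E(\Delta_k)=\sum_{\delta\in\mathrm{L}(\Delta_k)\setminus\mathrm{I}(\Delta^*_n)}e(\delta)
\;+\;\sum_{\delta\in\mathrm{L}(\Delta_k)\cap\mathrm{I}(\Delta^*_n)}e(\delta).
\]
The first sum is bounded by $\sigma_n$ directly, since by propriety of $\Delta^*_n$ each such $\delta$ lies under a unique leaf of $\Delta^*_n$, and subadditivity \eqref{subadd} dominates the descendants by the leaf value. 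The second sum is where Lemma~\ref{lmm:treeupper} is applied, but only to the \emph{minimal tree with leaves} $\mathrm{L}(\Delta_k)\cap\mathrm{I}(\Delta^*_n)$, which lies entirely inside $\mathrm{I}(\Delta^*_n)$ and therefore has cardinality at most $n$; this gives the bound $n\,\tilde m_k$ with $\tilde m_k := \max_{\delta\in\mathrm{L}(\Delta_k)}\tilde e(\delta)$. Combining with the lower bound $\sigma_n\geq(k-n+1)\,\tilde m_k$ — which comes from the case distinction above supplying $\#\bigl(\mathrm{I}(\Delta_k)\setminus\mathrm{I}(\Delta^*_n)\bigr)\geq k-n+1$, rather than from an unspecified ``$+1$ refinement'' — yields $E(\Delta_k)\leq\sigma_n+\frac{n}{k-n+1}\sigma_n=\frac{k+1}{k-n+1}\sigma_n$. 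This splitting of $E(\Delta_k)$ is the substantive idea controlling the constant; calling it ``bookkeeping'' and deferring to the binary single-root case of \cite{B:18} skips precisely what has to be verified anew in the multi-root, bounded-but-arbitrary-branching setting of $\Thetatree$.
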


\begin{proof}
The statement clearly holds if $k=0$ or $n=0$, and we can thus assume $k,n\geq 1$.
Let $\Delta^*_n$ be a tree realizing the best approximation for $n$, so that $E(\Delta^*_n) = \sigma_n$. If $\mathrm{I}(\Delta^*_n) \subseteq \mathrm{I}(\Delta_k)$, then $\Delta^*_n\subseteq \Delta_k$ and thus $E(\Delta_k) \leq E(\Delta^*_n)$. Otherwise, there exists an element of $\mathrm{I}(\Delta^*_n)$ that is not in $\mathrm{I}(\Delta_k)$. We now estimate $E(\Delta^*_n)$ from below in terms of
\[
  \tilde m_k := \max_{\delta \in \mathrm{L}(\Delta_k)} \tilde e(\delta) \,.
\]
Let $D := \mathrm{I}(\Delta_k) \setminus \mathrm{I}(\Delta^*_n)$. Since there is at least one node from $\mathrm{I}(\Delta^*_n)$ that is not in $\mathrm{I}(\Delta_k)$, 
we have $\# D = k - n + 1$.
Note that $D$ is the union of the trees $\Theta_\delta \cap \mathrm{I}(\Delta_k)$ for $\delta \in \mathrm{L}(\Delta^*_n)$. By Lemma \ref{lmm:treelower}, 
\begin{equation}\label{sigmanupper}
 \begin{aligned}
  \sigma_n &= E(\Delta^*_n) = \sum_{\delta \in \mathrm{L}(\Delta^*_n)} e(\delta) \\
  & \geq  \sum_{\delta \in \mathrm{L}(\Delta^*_n)}  (\# \Theta_\delta \cap \mathrm{I}(\Delta_k))\,\tilde m_k \geq \# D\, \tilde m_k \geq (k-n+1) \tilde m_k.
 \end{aligned}
\end{equation}
In order to estimate $E(\Delta_k)$ from above by $\tilde m_k$, we note that
\[
    E(\Delta_k) = \sum_{\delta \in \mathrm{L}(\Delta_k) \setminus \mathrm{I}(\Delta^*_n)} e(\delta) +  \sum_{\delta \in \mathrm{L}(\Delta_k) \cap \mathrm{I}(\Delta^*_n)} e(\delta) ,
\]
where on the one hand
\[
  \sum_{\delta \in \mathrm{L}(\Delta_k) \setminus \mathrm{I}(\Delta^*_n)} e(\delta) \leq  \sum_{\delta \in  \mathrm{L}(\Delta^*_n)} e(\delta) = \sigma_n,
\]
and on the other hand, applying Lemma \ref{lmm:treeupper} to the minimal tree with leaves $\mathrm{L}(\Delta_k) \cap \mathrm{I}(\Delta^*_n)$,
\[
   \sum_{\delta \in \mathrm{L}(\Delta_k) \cap \mathrm{I}(\Delta^*_n)} e(\delta) \leq 
    ( \#  \mathrm{I}(\Delta^*_n) ) \,\tilde m_k = n \tilde m_k.
\]
Combining these bounds with \eqref{sigmanupper}, we obtain
\[
  E(\Delta_k) \leq \sigma_n + \frac{n}{k-n+1} \sigma_n = \frac{k+1}{k-n+1} \sigma_n,
\]
which completes the proof.
\end{proof}

As in \cite{B:07}, from Theorem \ref{thm:treecoarsen} we obtain the following variant of \cite[Cor.~5.4]{BD:04}.

\begin{cor}\label{cor:treecoarsencard}
With $\Delta_N$ as generated by Algorithm \ref{alg:tree}, for any $c \in (0,1)$ and any proper tree $\tilde\Delta$ with roots $\Delta_0$ such that $E(\tilde\Delta) \leq c \eta$, we have
\[
  \# \mathrm{I}(\Delta_N) \leq \tilde C \# \mathrm{I}(\tilde\Delta),
\]
with $\tilde C>0$ depending only on $c$.
\end{cor}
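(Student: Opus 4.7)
The plan is to deduce the corollary directly from the near-best approximation estimate in Theorem \ref{thm:treecoarsen}, exploiting the stopping criterion of Algorithm \ref{alg:tree} to lower-bound the error at step $N-1$.

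First I would set $n := \#\mathrm{I}(\tilde\Delta)$ and observe that since $\tilde\Delta$ is a proper tree with root set $\Delta_0$ and at most $n$ interior nodes, it is admissible in the definition of $\sigma_n$, so the hypothesis $E(\tilde\Delta) \leq c\eta$ yields $\sigma_n \leq c\eta$. Next, I would note that each iteration of Algorithm \ref{alg:tree} converts exactly one leaf of $\Delta_k$ into an interior node of $\Delta_{k+1}$, so $\#\mathrm{I}(\Delta_N) = N$; the goal is therefore to show $N \leq \tilde C n$.

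The case $N = 0$ is trivial, so assume $N \geq 1$. By construction the algorithm did not terminate at step $N-1$, meaning $E(\Delta_{N-1}) > \eta$. If additionally $N - 1 \geq n$, then Theorem \ref{thm:treecoarsen} applied with $k = N-1$ gives
\[
 \eta < E(\Delta_{N-1}) \leq \frac{N}{N-n}\,\sigma_n \leq \frac{cN}{N-n}\,\eta ,
\]
which rearranges to $N(1-c) < n$, i.e.\ $N < n/(1-c)$. If instead $N-1 < n$, then $N \leq n \leq n/(1-c)$ holds directly since $c \in (0,1)$. In either case $\#\mathrm{I}(\Delta_N) = N \leq \frac{1}{1-c}\,\#\mathrm{I}(\tilde\Delta)$, so the conclusion holds with $\tilde C = 1/(1-c)$, which depends only on $c$.

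I do not anticipate a serious obstacle here: all the real work was done in Theorem \ref{thm:treecoarsen}, and the corollary is a standard \emph{accuracy-to-cardinality} conversion. The only point that requires a bit of care is ensuring the inequality $\sigma_n \leq E(\tilde\Delta)$, which uses that $\tilde\Delta$ is proper and rooted at $\Delta_0$ (so that it is indeed a competitor in the infimum defining $\sigma_n$), and the case distinction on whether $N-1 \geq n$ so that Theorem \ref{thm:treecoarsen} is applicable.
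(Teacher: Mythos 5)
Your proof is correct and follows the same underlying strategy as the paper: apply Theorem \ref{thm:treecoarsen} with $k=N-1$ and combine it with the stopping-criterion lower bound $E(\Delta_{N-1})>\eta$. The execution differs in a way worth noting. The paper introduces an auxiliary proper tree $\tilde\Delta^*$ with \emph{minimal} number of interior nodes $n^*$ satisfying $E(\tilde\Delta^*)\leq c\eta$, argues by contradiction via the intermediate inequality $\frac{N}{N-n}\leq c^{-1}$ for $n\leq (1-c)(N-1)$, and concludes $N < (1-c)^{-1}n^* + 1 \leq \bigl((1-c)^{-1}+1\bigr)\#\mathrm{I}(\tilde\Delta)$ (which tacitly uses $\#\mathrm{I}(\tilde\Delta)\geq 1$). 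You instead apply the theorem directly with $n=\#\mathrm{I}(\tilde\Delta)$, split into the cases $N-1\geq n$ and $N-1<n$, and read off $N<n/(1-c)$ by elementary algebra. This avoids the auxiliary tree and the contradiction, handles the degenerate cases ($N=0$, $n=0$) cleanly, and yields the slightly sharper constant $\tilde C = (1-c)^{-1}$ as opposed to the paper's $(1-c)^{-1}+1$. The one point you correctly took care of is verifying $\sigma_n\leq E(\tilde\Delta)$, which uses that $\tilde\Delta\in\Thetatree$ is a legitimate competitor in the infimum defining $\sigma_n$; and checking $n\leq N-1$ before invoking Theorem \ref{thm:treecoarsen}, since the bound \eqref{eq:treecoarsen} is only stated for $n\leq k$.
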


\begin{proof}
Let $\Delta_0,\dots, \Delta_N$ be as constructed by Algorithm \ref{alg:tree}. 
By construction, we have $E(\Delta_0) = \sigma_0$ and $E(\Delta_1) = \sigma_1$, and we can thus assume $N > 1$. Let $\gamma =  1 - c$, then $\gamma(N-1) \leq (1-c)N$ and hence 
\begin{equation}\label{eq:tccard}
  \frac{N}{N - n} \leq  \frac{N}{(1-\gamma) N + \gamma} \leq c^{-1} , \quad 0 \leq n \leq \gamma (N-1),
\end{equation}
for all such $N$. Let $\tilde\Delta^*$ be a proper tree with minimal $n^* := \#\tilde\Delta^*$ such that $E(\tilde\Delta^*) \leq c \eta$, so that $\#\mathrm{I}(\tilde\Delta) \geq n^*$ for $\tilde\Delta$ as in the assertion. Then since $E(\Delta_{N-1})>\eta$,
\[
 \sigma_{n^*} =  E(\tilde\Delta^*) \leq c \eta < c E(\Delta_{N-1}).
\]
However, applying \eqref{eq:treecoarsen}, with \eqref{eq:tccard} we obtain
\[
   E(\Delta_{N-1}) \leq \frac{N}{N-n} \sigma_n \leq c^{-1} \sigma_n 
\]
whenever $n \leq \gamma (N-1)$. Thus, we have $n^* > \gamma (N-1)$ and consequently 
\[
  \#\mathrm{I}(\Delta_{N}) = N < \gamma^{-1} n^* + 1 \leq \gamma^{-1} \#\mathrm{I}(\tilde\Delta) + 1 \leq (\gamma^{-1} + 1) \#\mathrm{I}(\tilde\Delta) . \qedhere
  \]
\end{proof}

From Corollary \ref{cor:treecoarsencard} we can now derive the particular quasi-optimality property required by the adaptive scheme.

\begin{cor}
Let $\omega_0,\omega_1$ with  $0<\omega_0 <  \omega_1 < 1$ be given, let $\Lambda^{\flat}$ be the result of Algorithm \ref{alg:tree} with $\eta = ( 1 -\omega_0^2) \norm{\br}^2$. Then \eqref{bulktree} holds with $\tilde C$ depending only on $\omega_0$ and $\omega_1$.
\end{cor}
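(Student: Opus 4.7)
The plan is to reduce both conditions in \eqref{bulktree} to an identity that relates the global error measure $E(\Delta_N)$ to $\norm{\br|_{\Lambda^0\cup \mathrm{I}(\Delta_N)}}^2$, and then to invoke Corollary \ref{cor:treecoarsencard} with the appropriate comparison tree.

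First I would verify the bulk inequality. Since $\Theta = (\cF\times\cS)\setminus \Lambda^0$ partitions $\cF\times\cS$ with $\Lambda^0$, we have $\norm{\br}^2 = \norm{\br|_{\Lambda^0}}^2 + \norm{\bP{\Theta}\br}^2$, and from the definition of $E$ together with \eqref{eq:tcfdef} one reads off $E(\Delta_N) = \norm{\bP{\Theta}\br}^2 - \norm{\br|_{\mathrm{I}(\Delta_N)}}^2$. Since $\Lambda^\flat = \Lambda^0 \cup \mathrm{I}(\Delta_N)$ is a disjoint union, combining these yields
\[
  \norm{\br|_{\Lambda^\flat}}^2 = \norm{\br|_{\Lambda^0}}^2 + \norm{\br|_{\mathrm{I}(\Delta_N)}}^2 = \norm{\br}^2 - E(\Delta_N)\,.
\]
The stopping criterion of Algorithm \ref{alg:tree} gives $E(\Delta_N) \leq \eta = (1-\omega_0^2)\norm{\br}^2$, hence $\norm{\br|_{\Lambda^\flat}}^2 \geq \omega_0^2\norm{\br}^2$, i.e., the first part of \eqref{bulktree}.

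For the cardinality bound, given a competitor $\tilde\Lambda \supset \Lambda^0$ in $\FStree$ with $\norm{\br|_{\tilde\Lambda}} \geq \omega_1\norm{\br}$, I would set $\tilde\Delta := \tilde\Lambda \setminus \Lambda^0 \subset \Theta$ and extend it to a proper tree $\tilde\Delta_*$ rooted at $\Delta_0$ by adjoining $\Delta_0$ itself together with all children in $\Theta$ of nodes in $\tilde\Delta$. A short case analysis (using that $\tilde\Delta \in \Thetatree$ is upward-closed in $\Theta$) shows $\mathrm{I}(\tilde\Delta_*) = \tilde\Delta$, so $\#\mathrm{I}(\tilde\Delta_*) = \#(\tilde\Lambda\setminus\Lambda^0)$. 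By the same leaf-partitioning argument that gave the formula for $E(\Delta_N)$, every $\delta \in \Theta\setminus\tilde\Delta$ descends from exactly one leaf of $\tilde\Delta_*$, so
\[
  E(\tilde\Delta_*) = \norm{\bP{\Theta}\br}^2 - \norm{\br|_{\tilde\Delta}}^2 = \norm{\br}^2 - \norm{\br|_{\tilde\Lambda}}^2 \leq (1-\omega_1^2)\norm{\br}^2 = c\,\eta
\]
with $c := (1-\omega_1^2)/(1-\omega_0^2) \in (0,1)$, strictly less than one precisely because $\omega_0 < \omega_1$.

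With this in hand, Corollary \ref{cor:treecoarsencard} applied to $\tilde\Delta_*$ yields $\#\mathrm{I}(\Delta_N) \leq \tilde C\,\#\mathrm{I}(\tilde\Delta_*)$ with $\tilde C$ depending only on $c$, hence only on $\omega_0$ and $\omega_1$; translating back via $\Lambda^\flat \setminus \Lambda^0 = \mathrm{I}(\Delta_N)$ gives the second part of \eqref{bulktree}. The only nontrivial step is the construction of $\tilde\Delta_*$ and the verification that its interior nodes coincide with $\tilde\Delta$ and that its leaves account exactly for $\Theta\setminus\tilde\Delta$; everything else is routine bookkeeping, and no additional structural assumption on $\tilde\Lambda$ beyond containing $\Lambda^0$ is required.
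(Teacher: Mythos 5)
Your proof is correct and follows essentially the same route as the paper's: you split $\norm{\br}^2 = \norm{\br|_{\Lambda^\flat}}^2 + E(\Delta_N)$, use the stopping criterion for the bulk inequality, and then construct a proper tree with roots $\Delta_0$ whose interior nodes are exactly $\tilde\Lambda\setminus\Lambda^0$ and feed it into Corollary~\ref{cor:treecoarsencard} with $c = (1-\omega_1^2)/(1-\omega_0^2)$. The only cosmetic difference is that you build $\tilde\Delta_*$ by explicitly adjoining $\tilde\Lambda\setminus\Lambda^0$ before taking children, whereas the paper takes $\Delta_0$ together with the children directly; these yield the same set because $\tilde\Lambda\setminus\Lambda^0\in\Thetatree$.
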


\begin{proof}
Let $\Delta_N$ be as computed in Algorithm \ref{alg:tree}. Note that $\norm{\br}^2 = \norm{\br|_{\Lambda^{\flat}}}^2 + \norm{\bP{\Theta} \br - \bP{\mathrm{I}(\Delta_N)}\br }^2$, where
\[
  \norm{\bP{\Theta} \br - \bP{\mathrm{I}(\Delta_N)}\br }^2 = E(\Delta_N) \leq ( 1 -\omega_0^2) \norm{\br}^2.
\]
Let any $\tilde\Lambda \in \FStree$ with $\tilde\Lambda\supset \Lambda^0$ and $\norm{\br|_{\tilde\Lambda} } \geq \omega_1 \norm{\br}$ be given. 
Let 
\[
 \tilde\Delta = \Delta_0 \cup  \{ \delta \colon \delta \in \mathrm{C}( \delta' )\text{ for a $\delta' \in \tilde\Lambda \setminus \Lambda^0$}  \}, 
 \]
which is the proper tree with roots $\Delta_0$ containing $\tilde\Lambda \setminus \Lambda^0$ and all children of its elements, so that $\mathrm{I}(\tilde\Delta) = \tilde\Lambda \setminus \Lambda^0$. We then have
\begin{equation}\label{eq:deltacond}
  \norm{\bP{\Theta} \br - \bP{\mathrm{I}(\tilde\Delta)}\br }^2 = E(\tilde\Delta) \leq ( 1 - \omega_1^2 ) \norm{\br}^2.
\end{equation}
From Corollary \ref{cor:treecoarsencard} with $c = (1 - \omega_1^2) / ( 1 - \omega_0^2)$, for any proper tree $\tilde\Delta$ with roots $\Delta_0$ such that
\eqref{eq:deltacond} holds, we have
\[
  \# (\Lambda^{\flat} \setminus \Lambda^0) =  \# \mathrm{I}(\Delta_N)  \leq \tilde C\# \mathrm{I}(\tilde \Delta) = \tilde C \# ( \tilde \Lambda\setminus \Lambda^0)
  \]
 with $\tilde C$ depending on $K$, $\omega_0$, and $\omega_1$.
 \end{proof}
 
 \begin{remark}\label{rem:binning}
 	In the given form, Algorithm \ref{alg:tree} requires $\mathcal{O}(\#(\Lambda^+\setminus \Lambda^0) \log \#(\Lambda^+\setminus\Lambda^0))$ operations due to the requirement of sorting the values $\tilde e(\delta)$, $\delta \in \Lambda^+\setminus \Lambda^0$. As noted in \cite[Rem.~2.2]{B:18}, the sorting can be replaced by a binary binning, where the $\tilde e(\delta)$ are sorted into bins corresponding to ranges of values of the form $[2^{-p} \max_\delta \tilde e(\delta), 2^{-p-1} \max_\delta \tilde e(\delta))$, $p \in \N_0$. In this case, \eqref{eq:treecoarsen} is replaced by
 	\[
 	   E(\Delta_k) \leq \frac{k+n+1}{k-n+1} \sigma_n, \quad 0 \leq n \leq k \leq N,
 	\] 
 	and the statement of Corollary \ref{cor:treecoarsencard} follows in the same manner with $\gamma = \frac12 (1-c)$. This variant of Algorithm \ref{alg:tree} requires $\mathcal{O}(\#(\Lambda^+ \setminus \Lambda^0))$ operations.
 \end{remark}
 
 \subsection{Galerkin solver}
 For an implementation of $\GALSOLVE$, the simplest option is an iterative scheme with inexact residual approximations by $\RESAPPROX$, where the evaluation in step \textbf{(vi)} is restricted to indices in $\Lambda$.
 
However, a potentially more efficient alternative is provided by the defect correction strategy of \cite{GHS}: starting from a sufficiently accurate approximation of the initial Galerkin residual, an iterative scheme using a fixed approximation of the operator is used to compute a correction. The resulting procedure $\GALSOLVE$ is stated in Algorithm \ref{galsolve}; it relies on the subroutine $\GALAPPLY$ specified in Algorithm \ref{galapply}.

\begin{algorithm}[t]
\caption{~~$\mathbf{\tilde v} = \GALSOLVE(\Lambda, \bv, \delta, \varepsilon)$, where \M{$\treesupp \bv \subseteq \Lambda \in \FStree$, $\#\Lambda < \infty$}, $\varepsilon>0$,and $\delta>0$ such that $\norm{(\bB \bv - \bbf)|_\Lambda} \leq \delta$}
\flushleft Let $\supp \bv \subseteq \Lambda= \{ (\nu,\lambda) \colon \nu \in F,\lambda \in S_\nu \}$, $F \subset \cF$, $S_\nu\in \Stree$ and $v_\nu = \sum_{\lambda \in S_\nu} \mathbf{v}_{\nu,\lambda} \psi_\lambda$
\begin{enumerate}[{\bf(i)}]
\item Determine $(M(\nu))_{\nu \in F}$ by $\APPLY(\bv ; \frac{\varepsilon}{3})$ as in Algorithm \ref{Apply_nosort}, with output restricted to $F$, and set $\br_0 = \GALAPPLY(\Lambda, \bv, (M(\nu))_{\nu \in F}, f)$
\item With the smallest $L \in \N_0$ such that $r_\bB^{-1} C_\bB 2^{- \alpha L} \leq \frac{\varepsilon}{3(\varepsilon +\delta) }$, take $(\tilde M(\nu))_{\nu \in F}$ such that for all $\bw$ with $\suppF \bw \subseteq F$,
\[
    ( \bB_L \bw)_\nu = \sum_{(\mu,\nu') \in \tilde M(\nu) }    (\bM_{\mu})_{\nu,\nu'} \,\bA_{\mu} \, \bw_{\nu'} , \quad \nu \in F\,
\]
\item Use the conjugate gradient method to find $\mathbf{s}$ such that $\norm{  \br_0 + \mathbf{B}_L \mathbf{s} } \leq \frac{\varepsilon}{3}$, 
where for $\bw$ with $\supp \bw \subseteq \Lambda$,
\[
  \mathbf{B}_L \bw =  \GALAPPLY(\Lambda, \bw, (\tilde M(\nu))_{\nu \in F}, 0),
\]
and set $\mathbf{\tilde v} = \bv + \mathbf{s}$
\end{enumerate}
\label{galsolve}
\end{algorithm}

\begin{algorithm}[t]
\caption{~~$\mathbf{w} = \GALAPPLY(\Lambda, \bv, (M(\nu))_{\nu \in F}, f_0)$, where \M{$\treesupp \bv \subseteq \Lambda \in \FStree$ and $\#\Lambda < \infty$}}
\flushleft Let $\supp \bv \subseteq  \Lambda = \{ (\nu,\lambda) \colon \nu \in F,\lambda \in S_\nu \}$, $F \subset \cF$, $S_\nu\in \Stree$ and $v_\nu = \sum_{\lambda \in S_\nu} \mathbf{v}_{\nu,\lambda} \psi_\lambda$, and $f_0$ piecewise polynomial with $\cT(f_0) < \infty$
\begin{enumerate}[{\bf(i)}]
\item For each $\nu \in F$, transform $v_\nu$ to piecewise polynomials on tilings $\cT(v_\nu)$ by applying Algorithm \ref{tilingtransform}

\item For each $\nu \in F$

\noindent
\quad Initialize $\hat w_\nu = -\delta_{0,\nu} f_0$

\noindent
\quad For each $(\mu, \nu') \in M(\nu)$\vspace{6pt}

\noindent\vspace{6pt}
\quad\quad $\displaystyle  \hat w_\nu \gets \hat w_\nu +  (\bM_{\mu})_{\nu,\nu'}  \sum_{T \in \cT_{\neq 0}(A_\mu v_{\nu'})} A_\mu v_{\nu'}\big|_{T}$

\item For each $\nu \in F$,
 use Algorithm \ref{tilingtransform} to transform $\hat w_\nu$ to its representation on the minimal tiling $\cT(\hat w_\nu)$, and set $S_\nu^+ = \cS(\cT(\hat w_\nu), 0)$.
 Determine $\Phi_\nu = \{ \varphi_\lambda \}_{\lambda \in \Sigma_\nu}$ as the corresponding locally single-scale basis with $\linspan \Phi_\nu \supseteq \linspan \{ \psi_\lambda \}_{\lambda \in S^+_\nu}$, $\Sigma_\nu = \Sigma(S_\nu^+)$, according to \eqref{eq:singlescaledef}, evaluate the integrals
\[
  \bs_{\nu,\lambda} = \hat w_{\nu}(\varphi_\lambda) \quad\text{for $\nu \in F$, $\lambda \in \Sigma_\nu$,}
\]
set $\tilde\bw_\nu = \bT_{S^+_\nu}^\top \bs_\nu$ and define $\bw_\nu$ by
\[ 
 \bw_{\nu,\lambda} = \begin{cases} \tilde\bw_{\nu,\lambda}, & \lambda \in S_\nu^+ \cap S_\nu, \\ 0 , & \text{otherwise,} \end{cases} \qquad \text{ for $\lambda \in \cS$}
\]
\end{enumerate}
\label{galapply}
\end{algorithm}

\begin{prop}\label{prop:galerkin}
Let $\mathbf{\tilde v} = \GALSOLVE(\Lambda, \bv, \delta, \varepsilon)$, then $\norm{(\bB \mathbf{\tilde v} - \bbf)|_\Lambda } \leq \varepsilon$, and for any $s>0$ with $s < \frac\alpha{d}$, the required number of arithmetic operations is bounded up to a constant by
\begin{multline}
\label{gsopest}
  \Bigl(   \#\cT(f)  +   \#\Lambda      +  \varepsilon^{-\frac1s} \bignorm{\bigl( \norm{\bv_\nu} \bigr)_{\nu \in \cF}}_{\cA^s}^{\frac1s}    \Bigr) \\ \times 
  \bigl(1+ g(\delta/\varepsilon) +   \abs{\log \varepsilon} + \log \bignorm{\bigl( \norm{\bv_\nu} \bigr)_{\nu \in \cF}}_{\cA^s} 
  +  \max_{\nu \in F^+} \#\supp \nu \bigr)  ,
\end{multline}
where $g\colon \R^+ \to \R^+$ is a nondecreasing function.
\end{prop}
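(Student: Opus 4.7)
The plan is to decompose the analysis into a correctness part and a complexity part, making use of the semidiscrete compression of Proposition \ref{BoundSemiDisc}, the \APPLY{} complexity bound in Proposition \ref{semidiscrapply}, the tree-based tiling estimates already developed in the proof of Theorem \ref{thm:resapprox}, and standard preconditioned CG convergence.

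For correctness, I would first establish that after step \textbf{(i)}, the initial defect $\br_0$ satisfies $\norm{\br_0 - (\bB\bv - \bbf)|_\Lambda} \leq \varepsilon/3$, because \APPLY{} yields a semidiscrete approximation accurate to $\varepsilon/3$ in the full $\ell_2$ norm and \GALAPPLY{} subsequently projects onto $\Lambda$ using the multi-to-single-scale transform, which by Assumptions \ref{ass:pwpolypsi} and \ref{ass:pwpolytheta} reproduces the exact integrals $\langle A_\mu v_{\nu'}, \psi_\lambda\rangle$ for $\lambda \in S_\nu$. Next, the choice of $L$ in step \textbf{(ii)} combined with Proposition \ref{BoundSemiDisc} ensures $\norm{\bB - \bB_L} \leq r_\bB \varepsilon / (3(\varepsilon+\delta))$; in particular, for $\varepsilon \leq \delta$ this is bounded by $r_\bB/3$, so $\bB_L$ restricted to $\ell_2(\Lambda)$ inherits coercivity with constant of order $r_\bB$ and condition number of order $\kappa(\bB)$. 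The CG output $\bs \in \ell_2(\Lambda)$ thus satisfies $\norm{\bs} \lesssim r_\bB^{-1}(\norm{\br_0} + \varepsilon/3) \lesssim r_\bB^{-1}(\delta + \varepsilon)$, so that $\norm{\bP_\Lambda(\bB-\bB_L)\bs} \leq \varepsilon/3$. Writing
\[
  (\bB\tilde\bv - \bbf)|_\Lambda = \underbrace{\bigl(\br_0 + \bP_\Lambda \bB_L \bs\bigr)}_{\norm{\cdot}\leq\varepsilon/3} + \underbrace{\bigl(\bP_\Lambda(\bB\bv-\bbf) - \br_0\bigr)}_{\norm{\cdot}\leq\varepsilon/3} + \underbrace{\bP_\Lambda(\bB-\bB_L)\bs}_{\norm{\cdot}\leq\varepsilon/3},
\]
gives the required bound $\norm{(\bB\tilde\bv - \bbf)|_\Lambda} \leq \varepsilon$.

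For the complexity, step \textbf{(i)} is a single call of \APPLY{} at tolerance $\varepsilon/3$ followed by one \GALAPPLY{}. Proposition \ref{semidiscrapply} and the analysis in the proof of Theorem \ref{thm:resapprox} bound its cost (handling both the spatial piecewise-polynomial representations via Algorithm \ref{tilingtransform}, the multi-to-single-scale transform through Proposition \ref{prop:transformcomplexity}, and the multi-index bookkeeping using Remark \ref{rem:coding}) by the first factor in \eqref{gsopest} times a logarithmic factor as in \eqref{ropest}. In step \textbf{(ii)}, the choice of $L$ yields $L \lesssim \alpha^{-1}\log((\varepsilon+\delta)/\varepsilon)$, and hence $2^{dL} \lesssim (1+\delta/\varepsilon)^{d/\alpha}$; the construction of the fixed sets $\tilde M(\nu)$ costs $\mathcal{O}(\#F \cdot 2^{dL})$, which can be absorbed in $g(\delta/\varepsilon) \cdot \#\Lambda$.

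For step \textbf{(iii)}, the bounded condition number of $\bP_\Lambda \bB_L|_\Lambda$ ensures that CG converges linearly with a rate independent of $\Lambda$, so the number of iterations is $\mathcal{O}(\log(\norm{\br_0}/\varepsilon)) \lesssim \log((\delta+\varepsilon)/\varepsilon)$, which can again be absorbed into $g(\delta/\varepsilon)$. Each CG iteration requires one \GALAPPLY{} applied to some iterate $\bw$ with $\supp \bw \subseteq \Lambda$, using the fixed $(\tilde M(\nu))_{\nu\in F}$. Arguing as in the proof of Theorem \ref{thm:resapprox}, with $\tilde M(\nu)$ in place of $M(\nu)$ and the fixed truncation level $L$ in place of the variable $\ell_j$, the cost of one \GALAPPLY{} call is bounded by a constant times
\[
   \Bigl(\#\Lambda + \#F \cdot 2^{dL}\Bigr)\Bigl(1 + L + \max_{\nu \in F}\#\supp\nu\Bigr),
\]
where the first term accounts for the piecewise-polynomial transforms and the multi-to-single-scale projection on the resulting tilings, and the second absorbs index bookkeeping. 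The factor $\#F \cdot 2^{dL}$ is again controlled by $\#\Lambda$ times a power of $(1+\delta/\varepsilon)$, which we subsume into $g(\delta/\varepsilon)$. Multiplying by the $\mathcal{O}(\log(\delta/\varepsilon))$ CG iterations and adding the cost of step \textbf{(i)} gives precisely the bound \eqref{gsopest}. The main obstacle I anticipate is bookkeeping: carefully combining the logarithmic factors from \APPLY{}, from the CG iteration count, and from the $L$-dependence so that the single prefactor $g(\delta/\varepsilon)$ absorbs both the $2^{dL}$ growth and the number of iterations without inflating the $\varepsilon^{-1/s}$ term.
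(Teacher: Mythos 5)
Your proposal follows essentially the same route as the paper. For the correctness of the residual bound $\norm{(\bB \mathbf{\tilde v} - \bbf)|_\Lambda} \leq \varepsilon$, the paper simply cites \cite[Thm.~2.5]{GHS}; you fill in that derivation explicitly via the three-way $\varepsilon/3$-decomposition, which is in fact the content of that cited theorem (and the constants in step {\bf(ii)} of Algorithm \ref{galsolve} are calibrated so that, if you carry out the coercivity estimate for $\bP{\Lambda}\bB_L|_\Lambda$ exactly rather than with $\lesssim$, the third term is precisely $\varepsilon/3$). For the complexity, your breakdown — one \APPLY{} call at tolerance $\varepsilon/3$ for step {\bf(i)}, a fixed level $L$ with $2^{dL}$ controlled by a power of $1+\delta/\varepsilon$ for step {\bf(ii)}, and a CG iteration count depending only on $\delta/\varepsilon$ with per-iteration \GALAPPLY{} cost bounded as in the proof of Theorem \ref{thm:resapprox} — is exactly the paper's argument, with the various $L$-dependent and iteration-count factors absorbed into $g(\delta/\varepsilon)$ in the same way. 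The one place you could tighten the write-up is the claim that \GALAPPLY{} in step {\bf(i)} reproduces the exact entries of $(\bB\bv - \bbf)|_\Lambda$: this relies on the output being restricted to $S^+_\nu \cap S_\nu$ and on the graded tree $S^+_\nu = \cS(\cT(\hat w_\nu),0)$ together with Proposition \ref{prop:transformcomplexity} giving exact coefficients there, which you assert but do not verify in detail; this, however, is a design property of Algorithm \ref{galapply} rather than a gap in the argument.
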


\begin{proof}
The bound on $\norm{(\bB \mathbf{\tilde v} - \bbf)|_\Lambda }$ follows from \cite[Thm.~2.5]{GHS}.
Concerning the costs of step {\bf (i)} of Algorithm \ref{galsolve}, for $( M(\nu))_{\nu \in F}$ we obtain from Proposition \ref{semidiscrapply} the estimate
\begin{equation}
   \sum_{\nu \in F} \# M(\nu) \lesssim  \varepsilon^{-\frac1s}  \bignorm{\bigl( \norm{\bv_\nu} \bigr)_{\nu \in \cF}}_{\cA^s}^{\frac1s} \,.
\end{equation}
Proceeding as in the proof of Theorem \ref{thm:resapprox}, the total number of arithmetic operations for this step is bounded by a fixed multiple of
\begin{equation*}
     \#\cT(f)  +  \Bigl( \#\Lambda      +   \varepsilon^{-\frac1s}  \bignorm{\bigl( \norm{\bv_\nu} \bigr)_{\nu \in \cF}}_{\cA^s}^{\frac1s} \Bigr)  \Bigl( 1 +  \abs{\log \varepsilon} + \log \bignorm{\bigl( \norm{\bv_\nu} \bigr)_{\nu \in \cF}}_{\cA^s} + \max_{\nu \in F^+} \#\supp \nu\Bigr)  \,.
\end{equation*}

We now consider the costs of Algorithm \ref{galapply} with $(\tilde M(\nu))_{\nu \in F}$ as determined in step {\bf (ii)} of Algorithm \ref{galsolve}.
Note that $L$ is a nondecreasing function of $\delta/\varepsilon$.
Moreover,
\begin{equation}
  \sum_{\nu \in F} \# \tilde M(\nu) \lesssim 2^{dL} \# \suppF \bv  \,.
\end{equation}
Again proceeding as in the proof of Theorem \ref{thm:resapprox}, one verifies that the number of arithmetic operations for one application of $\GALAPPLY$ in step {\bf (iii)} is bounded by a multiple of
\[
 \#\cT(f) +  ( 1 +  L ) \sum_{\nu \in F} \sum_{k = 0}^L \bigl( \#\cT(v_\nu) + (k+1) 2^{dk} \bigr)
\]
using the corresponding bounds for $\# \cT(\hat w_\nu)$ with $\hat w_\nu$, $\nu \in F$, as in Algorithm \ref{galapply}.
Accordingly, the costs of one iteration of the solver in step {\bf (iii)} are bounded by a multiple of
$
   \bigl( 1 +  L \bigr) \bigl(  \#\cT(f) + L \# \Lambda + L 2^{dL} \bigr)
$,
and the number of iterations required for the solver depends only on $\delta/\varepsilon$.
\end{proof}

\section{Optimality}

\begin{algorithm}[t]
	\caption{Adaptive Galerkin method}\label{AGM2}
	\flushleft Let $0< \omega_0 < \omega_1 < 1$, $\zeta, \gamma >0$ as in \eqref{eq:AGMparams}, $\bu^0 = 0$, and $\Lambda^0 = \emptyset$; formally set $\norm{\br^{-1} } := r_\bB^{-1} \norm{\bbf}$
\begin{tabbing}
AAA\= AAA \= \kill
 For $k = 0, 1, 2, \ldots$, perform the following steps: \\
\> $( \tilde \Lambda^{k+1}, \br^k, \eta_k, b_k) := \RESAPPROX(\bu^k ; \zeta, \frac{\zeta}{1 + \zeta} \norm{ \br^{k-1} }, \varepsilon )$\\
\> if $b_k \leq \varepsilon$, \\
\>\> return $\bu^k$\\
\> $\Lambda^{k+1} := \TREEAPPROX(\Lambda^k, \tilde\Lambda^{k+1}, \br^k, (1-\omega_0^2) \norm{\br^k}^2)$  \\
\> $\bu^{k+1} := \GALSOLVE(\Lambda^{k+1}, \bu^k, b_k, \gamma \norm{\br^k})$
\end{tabbing}
\end{algorithm}

We now consider the computational complexity of the basic adaptive scheme of Algorithm \ref{AGM} using the residual approximation of Algorithm \ref{optscheme}, tree coarsening by Algorithm \ref{alg:tree} and Galerkin solves by Algorithm \ref{galsolve}, which is summarized in Algorithm \ref{AGM2}. We proceed in two steps. First, we estimate the cardinality of discretizations that are generated in terms of the achieved error tolerance. With the residual approximation and tree coarsening schemes in place, this can be done by techniques from \cite{GHS} and \cite{S09}. In the second step, we consider the computational complexity of the method, where additional specifics of our countably-dimensional setting come into play.

In this section, we frequently use the condition number $\kappa(\bB) = \norm{\bB} \norm{\bB^{-1} }$ with respect to the spectral norm, as well as the energy norm
\[
  \norm{ \bv }_{\bB} = \sqrt{  \langle \bB \bv, \bv \rangle }, \quad \bv \in \ell_2(\cF\times\cS),
\]
associated to the mapping $\bB$ defined in \eqref{seqform}.
To ensure optimality of the scheme, we require the following assumptions on the parameters $\zeta \in (0,\frac12)$, $0 < \omega_0 < \omega_1 < 1$, and $\gamma>0$ of Algorithm \ref{AGM}: 
\begin{equation}\label{eq:AGMparams}
\begin{gathered}
  0< \zeta < \frac{\omega_0}{\omega_0 + 1} , \\
  \omega_1 (1 - \zeta) + \zeta <  (1 - 2\zeta) \kappa(\bB)^{-\frac12}, \\
  0<\gamma < \frac{(1 - \zeta)\omega_0 - \zeta}{(1 + \zeta) \kappa(\bB)}.
\end{gathered}
\end{equation}
Note that the requirements on $\zeta$ ensure that the upper bound for $\gamma$ is positive.

The main result of this work is the following theorem, which combines the above mentioned cardinality and complexity estimates. The proof is given in the following two subsections.

\begin{theorem}\label{complexitythm}
Let $f \in L_2(D)$ be piecewise polynomial with $\#\cT(f)<\infty$, let $\{ \theta_\mu\}_{\mu \in \cM}$ satisfy Assumptions \ref{ass:wavelettheta}, and let Assumptions \ref{ass:pwpolypsi}, \ref{ass:pwpolytheta} hold. 
Let $0 < s < \frac{\alpha}{d}$ and $\norm{\bu}_{\mathrm{t},p}< \infty$ for $p = \left( s + \frac12\right)^{-1}$. Then for each $\varepsilon>0$, Algorithm \ref{AGM2} with parameters satisfying \eqref{eq:AGMparams} outputs an approximation $\bu^k$ for some $k\in\N$ with $\norm{\bu-\bu^k}_{\ell_2}\leq\varepsilon$, such that the following holds:
\begin{enumerate}[{\rm(i)}] 
\item There exists $C>0$ independent of $\varepsilon$ and $\bu$, but depending on $s$, such that 
\[ \M{ \#\treesupp \bu^k } \leq C\, \varepsilon^{-\frac1s} \norm{\bu}_{\mathrm{t},p}^{\frac1s}. \]
\item The scheme can be realized such that with a $C>0$ independent of $\varepsilon$ and $\bu$, the number of operations required to compute $\bu^k$ is bounded by
\[ C\bigl(1 + \varepsilon^{-\frac1s} \norm{\bu}_{\mathrm{t},p}^{\frac1s}(1 + \abs{\log \varepsilon} + \log  \norm{\bu}_{\mathrm{t},p})\bigr) \,.
\]
\end{enumerate}
\end{theorem}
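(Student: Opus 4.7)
The plan is to follow the template of optimality proofs for adaptive wavelet Galerkin methods from \cite{GHS,S14}, adapted here to the tree-structured setting with countably many parametric variables. The argument splits naturally into two parts matching the two subsections the paper indicates: first a convergence and cardinality analysis that uses only the abstract properties established in the preceding section, and then a complexity analysis that inserts the quantitative bounds from Theorem \ref{thm:resapprox} and Proposition \ref{prop:galerkin}.

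For the cardinality bound (i), I would first establish geometric error reduction in the energy norm $\norm{\cdot}_\bB$. By Theorem \ref{thm:resapprox}, the approximate residual $\br^k$ returned by \RESAPPROX{} either certifies $\norm{\bB\bu^k-\bbf}\leq\varepsilon$ (in which case the loop terminates) or satisfies the relative error bound \eqref{eq:reserrbound}. Combined with the bulk-chasing property \eqref{bulk1} of $\Lambda^{k+1}$ inherited from \TREEAPPROX{} (via the corollary following Algorithm \ref{alg:tree}) and the triangle inequality, this yields a D\"orfler-type lower bound
\[
\norm{(\bB\bu^k-\bbf)|_{\Lambda^{k+1}}} \geq \omega^* \norm{\bB\bu^k-\bbf}
\]
with some $\omega^*>0$ depending on $\omega_0$ and $\zeta$; the condition \eqref{eq:AGMparams} precisely ensures that $\omega^*\in(0,1)$ is large enough that Lemma \ref{lmm:saturation} provides a fixed contraction $\rho<1$ between $\bu$ and the exact Galerkin solution $\bu_{\Lambda^{k+1}}$ in $\norm{\cdot}_\bB$. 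The tolerance $\gamma\norm{\br^k}$ in \GALSOLVE{} (with $\gamma$ as in \eqref{eq:AGMparams}) is calibrated so that the resulting perturbation preserves strict contraction between successive iterates $\norm{\bu-\bu^k}_\bB$ and $\norm{\bu-\bu^{k+1}}_\bB$. Consequently, the number of outer iterations needed to reach tolerance $\varepsilon$ is of order $\log(1/\varepsilon)$.

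With geometric error reduction available, the cardinality claim follows from the quasi-optimality of \TREEAPPROX. For each $k$, Corollary \ref{cor:treebestapproxnorm} and the estimate \eqref{treeapproximability2} produce a tree $\tilde\Lambda_k\in\FStree$ with $\#\tilde\Lambda_k \lesssim (\mu\norm{\br^k})^{-1/s}\norm{\bu}_{\mathrm{t},p}^{1/s}$ on which a suitable approximation of $\bu$ attains error comparable to $\mu\norm{\br^k}$; choosing $\mu$ sufficiently small in terms of $\omega_1$ and $\zeta$, the set $\tilde\Lambda_k\cup\Lambda^k$ fulfills the bulk-chasing condition with parameter $\omega_1$, so the quasi-optimality clause \eqref{bulk2} gives $\#(\Lambda^{k+1}\setminus\Lambda^k) \lesssim \norm{\br^k}^{-1/s}\norm{\bu}_{\mathrm{t},p}^{1/s}$. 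Summing this bound along the geometrically decreasing sequence $(\norm{\br^k})_k$ (which, by \eqref{eq:reserrbound}, is equivalent up to constants to $\norm{\bu-\bu^k}_\bB$) collapses to the last term and yields $\#\treesupp\bu^k \lesssim \varepsilon^{-1/s}\norm{\bu}_{\mathrm{t},p}^{1/s}$.

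For the complexity claim (ii), I would substitute the bounds \eqref{ropest} and \eqref{gsopest} into each iteration. A key intermediate step is to show that the \emph{semidiscrete} sparsity $\bignorm{(\norm{\bu^k_\nu})_{\nu\in\cF}}_{\cA^s}$ of the iterates stays bounded by a fixed multiple of $\norm{\bu}_{\mathrm{t},p}$, which follows from the cardinality bound of the previous step and Remark \ref{rem:treenormequiv} applied to $\bu-\bu^k$ together with the triangle inequality in $\cA^s$. The resulting per-iteration cost has the form $\norm{\br^k}^{-1/s}\norm{\bu}_{\mathrm{t},p}^{1/s}(1+\abs{\log\norm{\br^k}}+\log\norm{\bu}_{\mathrm{t},p}+\max_{\nu\in F^{k+1}}\#\supp\nu)$; summing over $k$ again telescopes, since $\norm{\br^k}$ decays geometrically, producing a total cost dominated by the final iteration and thus bounded by $\varepsilon^{-1/s}\norm{\bu}_{\mathrm{t},p}^{1/s}(1+\abs{\log\varepsilon}+\log\norm{\bu}_{\mathrm{t},p})$.

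The main obstacle will be handling the encoding-dependent factor $\max_\nu\#\supp\nu$, which by Remark \ref{rem:nuscaling} can in principle grow polynomially in $1/\varepsilon$ if naively bounded by $\max_\nu\dim(\nu)$. The argument must instead bound $\#\supp\nu$ by the number of Legendre indices activated in the tree coarsening, giving a bound of order $\log(\#\suppF\bu^k)\lesssim \log\varepsilon^{-1}+\log\norm{\bu}_{\mathrm{t},p}$ that is absorbable into the logarithmic overhead. A secondary technical point is verifying that the parameter choices \eqref{eq:AGMparams} are simultaneously compatible with both the contraction argument (via Lemma \ref{lmm:saturation}) and the bulk-chasing reduction used in the cardinality estimate; this requires tracking the constants through the triangle inequality in $\norm{\cdot}_\bB$ versus $\norm{\cdot}_{\ell_2}$ using $\kappa(\bB)$.
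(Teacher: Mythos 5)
Your proposal follows the same overall structure as the paper's proof: part (i) via geometric error reduction in $\|\cdot\|_\bB$ plus a comparison argument showing the bulk-chasing sets are quasi-optimal, then summing over iterations; part (ii) by substituting the cost bounds from Theorem~\ref{thm:resapprox} and Proposition~\ref{prop:galerkin} into a telescoping sum. Your outline of part (i) accurately captures the content of Lemma~\ref{ghslemma} and Lemma~\ref{lmm:card}: the construction of a small auxiliary set $\tilde\Lambda_k$ realizing error $\beta\|\bu - \bu^k\|_\bB$, the verification that its union with $\Lambda^k$ satisfies the relaxed bulk criterion with parameter $\omega_1$, and the use of \eqref{bulk2} to pass to $\Lambda^{k+1}$. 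Your treatment of the $\cA^s$-boundedness of the iterates via Remark~\ref{rem:treenormequiv} and the cardinality bound also matches the paper.

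The one place where the argument is not correct as written is the control of $\max_{\nu\in F^k}\#\supp\nu$ in part (ii). You propose to ``bound $\#\supp\nu$ by the number of Legendre indices activated in the tree coarsening, giving a bound of order $\log(\#\suppF \bu^k)$''. The number of activated Legendre indices $\#\suppF\bu^k$ scales like $\varepsilon^{-1/s}$; there is no reason a priori why $\#\supp\nu$ should be bounded by the \emph{logarithm} of this quantity, and the stated chain does not establish that. The correct mechanism is structural and is isolated in Proposition~\ref{prop:suppest}: since each matrix $\bM_\mu$ is bidiagonal, a single invocation of $\APPLY$ (and hence of $\RESAPPROX$) can only produce new multi-indices $\nu'$ differing from existing ones by $\pm e_\mu$ for a single $\mu$, so $\max_{\nu\in F^{k+1}}\#\supp\nu \leq \max_{\nu\in F^k}\#\supp\nu + 1$. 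Starting from $F^1 = \{0\}$ this gives $\max_{\nu\in F^k}\#\supp\nu\leq k-1$, and the iteration count $k$ is $\mathcal{O}(1+|\log\varepsilon|)$ by the contraction established in Lemma~\ref{lmm:card}. Your final numerical conclusion is right, but the argument needs to invoke the bidiagonal structure of $\bM_\mu$ rather than a count of Legendre indices.
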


\begin{remark}\label{remark1d}
 If in addition to the assumptions of Theorem \ref{complexitythm}, $d\geq 2$ and $D$ is convex, then Proposition \ref{prop:treeapprox} applies, and thus the statement of Theorem \ref{complexitythm} holds for any $s < \frac{\alpha}{d}$. In particular, for any such $s$, the number of operations required by Algorithm \ref{AGM2} is bounded by $C \varepsilon^{-1/s}$ with a $C>0$ depending on $u$ and $s$. As a consequence of Remark \ref{rem:1dcase}, in the special case $d=1$ the statement holds only for $s < \frac{2}{3} \alpha$.
\end{remark}

\subsection{Cardinality of discretization subsets}

In preparation of the proof of statement (i) in Theorem \ref{complexitythm}, we use ideas from \cite[Lemma~2.1]{GHS}, \cite{S14}, 
and \cite[Prop.\ 4.2]{S09} in order to relate error reduction to cardinality in our present setting of tree approximation.

\begin{lemma}
\label{ghslemma}
	Let $\beta \in (0, \norm{\bB}^{-\frac12})$, $\omega\in (0, \kappa(\bB)^{-\frac12}(1 - \norm{\bB} \beta^2)^{\frac12}]$ and $\bw\in\ell_2(\cF\times\cS)$ such that $\supp \bw \subseteq \Lambda_0 \in  \FStree$. Then the smallest $\Lambda\supseteq \Lambda_0$ with $\Lambda \in  \FStree$ and
	\begin{equation}\label{rescond}
		\norm{(\bB\bw - \bbf)|_\Lambda} \geq \omega \norm{\bB\bw - \bbf} 
	\end{equation}
	satisfies
	\begin{equation}
		\# (\Lambda \setminus \Lambda_0) \leq \min \bigl\{  \#\bar\Lambda\colon \bar\Lambda \in \FStree,\; \min_{\supp \bv \subseteq \bar \Lambda} \norm{\bu - \bv} \leq \beta \norm{\bu - \bw}_{\bB}  \bigr\}.
	\end{equation}
\end{lemma}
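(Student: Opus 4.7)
The plan is to show that the set $\tilde\Lambda := \Lambda_0 \cup \bar\Lambda$ itself satisfies the bulk-chasing condition \eqref{rescond}, where $\bar\Lambda \in \FStree$ admits some $\bv_* \in \ell_2(\cF\times\cS)$ with $\supp\bv_* \subseteq \bar\Lambda$ and $\norm{\bu - \bv_*} \leq \beta\norm{\bu-\bw}_\bB$. Since $\Stree$ is stable under finite unions (the resulting levelwise index set is still rooted and closed under taking parents), the same holds for $\FStree$, so $\tilde\Lambda \in \FStree$ and $\tilde\Lambda \supseteq \Lambda_0$. By minimality of $\Lambda$, this would already give $\#(\Lambda\setminus\Lambda_0) \leq \#(\tilde\Lambda\setminus\Lambda_0) \leq \#\bar\Lambda$, which is the claim.

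To verify \eqref{rescond} on $\tilde\Lambda$, I would introduce the Galerkin approximation $\bw^* \in \ell_2(\cF\times\cS)$ of $\bu$ on $\tilde\Lambda$, characterised by $\supp\bw^*\subseteq\tilde\Lambda$ and $(\bB\bw^*-\bbf)|_{\tilde\Lambda}=0$. This yields $(\bB\bw-\bbf)|_{\tilde\Lambda} = (\bB(\bw-\bw^*))|_{\tilde\Lambda}$. Since $\bw-\bw^* \in \ell_2(\tilde\Lambda)$, testing against it and using $\norm{\bv} \leq \norm{\bB^{-1}}^{1/2}\norm{\bv}_\bB$ gives
\[
 \norm{(\bB\bw-\bbf)|_{\tilde\Lambda}}\,\norm{\bw-\bw^*} \geq \langle \bB(\bw-\bw^*),\bw-\bw^*\rangle = \norm{\bw-\bw^*}_\bB^2,
\]
so $\norm{(\bB\bw-\bbf)|_{\tilde\Lambda}} \geq \norm{\bB^{-1}}^{-1/2}\norm{\bw-\bw^*}_\bB$.

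The crucial estimate is then $\norm{\bw-\bw^*}_\bB \geq \sqrt{1-\norm{\bB}\beta^2}\,\norm{\bw-\bu}_\bB$. Because $\bw \in \ell_2(\Lambda_0) \subseteq \ell_2(\tilde\Lambda)$, Galerkin orthogonality of $\bw^*$ yields the Pythagorean identity $\norm{\bw-\bu}_\bB^2 = \norm{\bw-\bw^*}_\bB^2 + \norm{\bw^*-\bu}_\bB^2$. The best-approximation property of $\bw^*$ in $\norm{\cdot}_\bB$ on $\ell_2(\tilde\Lambda)$, applied to the competitor $\bv_* \in \ell_2(\bar\Lambda) \subseteq \ell_2(\tilde\Lambda)$, gives
\[
 \norm{\bw^*-\bu}_\bB \leq \norm{\bv_*-\bu}_\bB \leq \norm{\bB}^{\frac12}\norm{\bv_*-\bu} \leq \beta\norm{\bB}^{\frac12}\norm{\bw-\bu}_\bB,
\]
which produces the asserted lower bound on $\norm{\bw-\bw^*}_\bB$. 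Combining this with $\norm{\bB\bw-\bbf} = \norm{\bB(\bw-\bu)} \leq \norm{\bB}^{\frac12}\norm{\bw-\bu}_\bB$ yields
\[
 \norm{(\bB\bw-\bbf)|_{\tilde\Lambda}} \geq \kappa(\bB)^{-\frac12}\sqrt{1-\norm{\bB}\beta^2}\,\norm{\bB\bw-\bbf} \geq \omega\,\norm{\bB\bw-\bbf}
\]
by the standing assumption on $\omega$, closing the argument.

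The main subtlety to watch is that $\bw$ is \emph{not} assumed to be a Galerkin solution on $\Lambda_0$; all orthogonality therefore has to be extracted from the auxiliary $\bw^*$ built on the enlarged set $\tilde\Lambda$, and this is precisely where the factor $\kappa(\bB)^{-1/2}$ (rather than a cleaner $\bB$-energy-norm factor) comes from when one passes between the $\ell_2$-norm of the residual and the $\bB$-energy norm of the error. The admissibility condition $\tilde\Lambda \in \FStree$, needed so that $\tilde\Lambda$ can compete in the minimization defining $\Lambda$, is taken care of by the spatial-componentwise closure of tree structures under unions.
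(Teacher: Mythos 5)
Your proposal is correct and follows essentially the same route as the paper's proof: introducing the Galerkin solution on $\Lambda_0\cup\bar\Lambda$ (the paper's $\bu_{\hat\Lambda}$, your $\bw^*$), invoking Galerkin orthogonality to get the Pythagorean split, bounding the orthogonal complement via the best-approximation property and $\norm{\cdot}_\bB\le\norm{\bB}^{1/2}\norm{\cdot}$, and converting between energy and $\ell_2$ residual norms to reach the constant $\kappa(\bB)^{-1/2}(1-\norm{\bB}\beta^2)^{1/2}$. The only cosmetic difference is that the paper fixes the minimizing $\bar\Lambda_N$ at the outset rather than quantifying over all competitors at the end.
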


\begin{proof}
	With $N:=\min \{  \#\bar\Lambda\colon \bar\Lambda \in \FStree,\; \min_{\supp \bv \subseteq \bar \Lambda} \norm{\bu - \bv} \leq \beta \norm{\bu - \bw}_{\bB} \}$, let $\bu_N$ be a best $N$-term tree approximation with $\supp \bu_N \subseteq \bar\Lambda_N \in \FStree$, $\#\bar\Lambda_N = N$, of $\bu$ such that $\norm{\bu - \bu_N} \leq \beta \norm{\bu - \bw}_{\bB}$. 
	With $\hat\Lambda \coloneqq \Lambda_0 \cup \bar\Lambda_N \in  \FStree$, the Galerkin solution $\bu_{\hat\Lambda}$ satisfies
	\[
	   \norm{\bu - \bu_{\hat\Lambda}}_{\bB} \leq \norm{\bu - \bu_N}_{\bB} \leq \norm{\bB}^{\frac12} \norm{\bu - \bu_N} \leq \norm{\bB}^{\frac12} \beta\norm{\bu - \bw}_{\bB}.
	\]
	By Galerkin orthogonality, $\norm{\bu - \bw}^2_\bB \leq \norm{\bu_{\hat\Lambda} - \bw}_{\bB}^2 + \norm{\bB} \beta^2 \norm{\bu - \bw}^2_\bB$, and therefore
	\[
	    \norm{\bu_{\hat\Lambda}  - \bw}_\bB \geq ( 1 - \norm{\bB} \beta^2)^{\frac12} \norm{\bu - \bw}_\bB.
	\]
	This gives
	\begin{align*}
		\norm{( \bB \bw - \bbf )|_{\hat\Lambda}} & = \norm{( \bB \bw -  \bB \bu_{\hat\Lambda} )|_{\hat\Lambda}} \geq \norm{\bB^{-1}}^{-\frac12} \norm{ \bw -  \bu_{\hat\Lambda} }_{\bB} \\
		 & \geq \norm{\bB^{-1}}^{-\frac12} ( 1 - \norm{\bB} \beta^2)^{\frac12} \norm{\bu - \bw}_\bB \\
		 &\geq \kappa(\bB)^{-\frac12} ( 1 - \norm{\bB} \beta^2)^{\frac12}  \norm{\bB \bw - \bbf } \\
		 &\geq \omega \norm{ \bB \bw - \bbf }.
	\end{align*}
	By definition of $\Lambda$ and since $\hat\Lambda \supseteq \Lambda_0$, we arrive at $
	  \#(\Lambda\setminus\Lambda_0) \leq \#(\hat\Lambda\setminus\Lambda_0) \leq N
	 $.
\end{proof}

\begin{lemma}\label{lmm:card}
 Let the parameters of Algorithm \ref{AGM2} satisfy \eqref{eq:AGMparams}.
  Then for the iterates $\bu^k$ with $\supp \bu^k \subseteq \Lambda^k$ one has
 \[
         \norm{ \bu - \bu^{k+1} }_\bB \leq \rho \norm{ \bu - \bu^k }_\bB
 \]
 with $\rho =  \sqrt{  1 -  {((1-\zeta)\omega_0 - \zeta)^2} \kappa(\bB)^{-1} +  \gamma^2 ( 1+\zeta)^2 \kappa(\bB)   } \in (0,1)$, and
 \[
   \# (\Lambda^{k+1} \setminus \Lambda^k) \lesssim 
    \min \bigl\{  \#\bar\Lambda\colon \bar\Lambda \in \FStree,\; \min_{\supp \bv \subseteq \bar \Lambda} \norm{\bu - \bv} \leq \beta \norm{\bu - \bu^k}_{\bB}  \bigr\}.
 \]
\end{lemma}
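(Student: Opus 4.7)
The plan is to split the proof into two parts: the contractive energy-norm estimate, and the cardinality bound. In both parts the main bridge is the residual tolerance $\norm{\br^k - (\bbf - \bB\bu^k)} \leq \zeta \norm{\bbf - \bB\bu^k}$ guaranteed by $\RESAPPROX$, which implies the two-sided control $(1-\zeta)\norm{\bbf - \bB\bu^k} \leq \norm{\br^k} \leq (1+\zeta) \norm{\bbf - \bB\bu^k}$ and thereby lets one freely translate bulk-chasing conditions between $\br^k$ and the exact residual $\bbf - \bB \bu^k$.

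For the energy-norm contraction, I would first transport the $\TREEAPPROX$ bulk guarantee $\norm{\br^k|_{\Lambda^{k+1}}} \geq \omega_0 \norm{\br^k}$ onto the exact residual via the triangle inequality, obtaining $\norm{(\bbf - \bB\bu^k)|_{\Lambda^{k+1}}} \geq ((1-\zeta)\omega_0 - \zeta)\, \norm{\bbf - \bB\bu^k}$, whose coefficient is positive by the first part of \eqref{eq:AGMparams}. Lemma \ref{lmm:saturation} then yields $\norm{\bu - \bu_{\Lambda^{k+1}}}_\bB \leq \sqrt{1 - ((1-\zeta)\omega_0 - \zeta)^2 \kappa(\bB)^{-1}}\, \norm{\bu - \bu^k}_\bB$ for the exact Galerkin projection $\bu_{\Lambda^{k+1}}$ on $\Lambda^{k+1}$. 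To absorb the inexactness of $\GALSOLVE$, I would use the identity $\langle \bB \mathbf{z}, \mathbf{z}\rangle = \langle (\bB \mathbf{z})|_{\Lambda^{k+1}}, \mathbf{z}\rangle$ valid for $\supp \mathbf{z} \subseteq \Lambda^{k+1}$, combined with $\norm{\mathbf{z}} \leq \norm{\bB^{-1}}^{1/2} \norm{\mathbf{z}}_\bB$, to derive $\norm{\mathbf{z}}_\bB \leq \norm{\bB^{-1}}^{1/2}\, \norm{(\bB\mathbf{z})|_{\Lambda^{k+1}}}$; applied to $\mathbf{z} = \bu^{k+1} - \bu_{\Lambda^{k+1}}$ together with the solver tolerance $\gamma \norm{\br^k}$ and $\norm{\br^k} \leq (1+\zeta) \norm{\bB}^{1/2}\norm{\bu - \bu^k}_\bB$, this gives $\norm{\bu^{k+1} - \bu_{\Lambda^{k+1}}}_\bB \leq \gamma(1+\zeta)\sqrt{\kappa(\bB)}\, \norm{\bu - \bu^k}_\bB$. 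Galerkin orthogonality in the $\bB$-inner product provides the Pythagorean identity $\norm{\bu - \bu^{k+1}}_\bB^2 = \norm{\bu - \bu_{\Lambda^{k+1}}}_\bB^2 + \norm{\bu^{k+1} - \bu_{\Lambda^{k+1}}}_\bB^2$, and adding the two squared bounds produces exactly the stated $\rho^2$; the upper bound on $\gamma$ in \eqref{eq:AGMparams} is precisely what ensures $\rho < 1$.

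For the cardinality bound, my strategy is to exhibit a tree that satisfies the bulk condition for $\br^k$ with the stronger constant $\omega_1$ whose cardinality is already dominated by the best tree approximation quantity, and then invoke the $\TREEAPPROX$ quasi-optimality property \eqref{bulk2}. I would apply Lemma \ref{ghslemma} to $\bw = \bu^k$, $\Lambda_0 = \Lambda^k$, with parameter $\omega_{\mathrm{ex}} \in (0,\kappa(\bB)^{-1/2}(1-\norm{\bB}\beta^2)^{1/2}]$ for a suitable $\beta>0$; this produces a tree $\hat\Lambda \supseteq \Lambda^k$ whose cardinality satisfies the best-approximation bound appearing on the right-hand side of the claim and which obeys $\norm{(\bbf - \bB\bu^k)|_{\hat\Lambda}} \geq \omega_{\mathrm{ex}}\norm{\bbf - \bB\bu^k}$. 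Transporting this condition back to $\br^k$ yields $\norm{\br^k|_{\hat\Lambda}} \geq (\omega_{\mathrm{ex}} - \zeta)/(1+\zeta)\, \norm{\br^k}$, which reaches the threshold $\omega_1$ as soon as $\omega_{\mathrm{ex}} \geq \omega_1(1+\zeta) + \zeta$. A short algebraic check shows $\omega_1(1+\zeta)+\zeta \leq (\omega_1(1-\zeta) + \zeta)/(1-2\zeta)$, so the second condition in \eqref{eq:AGMparams} implies $\omega_1(1+\zeta)+\zeta < \kappa(\bB)^{-1/2}$ and thus a compatible pair $(\omega_{\mathrm{ex}}, \beta)$ exists. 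Tree-coarsening quasi-optimality \eqref{bulk2} then gives $\#(\Lambda^{k+1}\setminus\Lambda^k) \lesssim \#(\hat\Lambda \setminus \Lambda^k)$, which combined with the bound from Lemma \ref{ghslemma} completes the argument.

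The main obstacle I expect is coordinating the several ranges of parameters $\zeta, \omega_0, \omega_1, \gamma, \beta$ and the compatibility constraints from Lemmas \ref{lmm:saturation} and \ref{ghslemma} so that all of them hold simultaneously under the single set of assumptions \eqref{eq:AGMparams}; the remainder is triangle-inequality bookkeeping together with the Pythagorean identity from Galerkin orthogonality.
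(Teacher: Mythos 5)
Your proposal is correct and follows essentially the same route as the paper: both proofs transport the bulk criterion from $\br^k$ to the exact residual via the $\RESAPPROX$ relative tolerance, invoke Lemma \ref{lmm:saturation} and the Pythagorean identity from Galerkin orthogonality for the contraction, and for the cardinality bound combine Lemma \ref{ghslemma} with a reverse transport to $\br^k$ and the tree quasi-optimality \eqref{bulk2}. The only (immaterial) difference is that your reverse transport of the bulk condition uses one fewer triangle inequality and hence the marginally sharper threshold $\omega_1(1+\zeta)+\zeta$, whereas the paper works with $\hat\omega = \frac{\omega_1(1-\zeta)+\zeta}{1-2\zeta}$; you correctly verify that the first is dominated by the second, so both choices are compatible with \eqref{eq:AGMparams}.
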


\begin{proof}
 By Theorem \ref{thm:resapprox}, the output of $\RESAPPROX$ in Algorithm \ref{AGM2} satisfies 
 \[
   \norm{ \br^k - (\bB \bu^k -\bbf) } \leq \zeta \norm{\bB \bu^k -\bbf}. 
 \]
 As a consequence,
 \[
 \begin{aligned}
   \norm{ (\bB \bu^k - \bbf)|_{\Lambda^{k+1}}} &\geq \norm{ \br^k |_{\Lambda^{k+1}}} - \norm{ \br^k - ( \bB \bu^k - \bbf) }  \\
     & \geq \omega_0 \norm{\br^k} - \norm{ \br^k - ( \bB \bu^k - \bbf) } \\
     & \geq \omega_0   \norm{ \bB \bu^k - \bbf} - (\omega_0 + 1)  \norm{ \br^k - ( \bB \bu^k - \bbf) } \\
     & \geq \bigl( \omega_0 - \zeta (\omega_0 + 1) \bigr) \norm{ \bB \bu^k - \bbf} .
 \end{aligned}
 \]
 By Lemma \ref{lmm:saturation}, for the Galerkin solution $\bu_{\Lambda^{k+1}}$ on $\Lambda^{k+1}$ we thus have
 \[
    \norm{ \bu - \bu_{\Lambda^{k+1}} }_\bB \leq \left( 1 -  \frac{((1-\zeta)\omega_0 - \zeta)^2}{\kappa(\bB)}  \right)^{\frac12} \norm{ \bu - \bu^k}_\bB.
 \]
 Moreover,
 \[
 \begin{aligned}
   \norm{ \bu_{\Lambda^{k+1}} - \bu^{k+1} }_\bB &\leq \norm{\bB^{-1}}^{\frac12} \norm{ (\bbf - \bB\bu^{k+1} )|_{\Lambda^{k+1}} } \leq
     \norm{\bB^{-1}}^{\frac12}\gamma\norm{\br^{k}}    \\
     &  \leq \norm{\bB^{-1}}^{\frac12}\gamma ( 1 + \zeta) \norm{ \bbf - \bB \bu^{k} }
      \leq \gamma (1 + \zeta) \kappa(\bB)^{\frac12} \norm{\bu - \bu^k}_\bB,
   \end{aligned}
 \]
 and by Galerkin orthogonality,
 \[
 \begin{aligned}
   \norm{ \bu - \bu^{k+1} }_\bB^2 &=  \norm{ \bu - \bu_{\Lambda^{k+1}} }_\bB^2 +  \norm{ \bu_{\Lambda^{k+1}} - \bu^{k+1} }_\bB^2   \\
    &\leq \Bigl(   1 -  {((1-\zeta)\omega_0 - \zeta)^2} \kappa(\bB)^{-1} +  \gamma^2 ( 1+\zeta)^2 \kappa(\bB)   \Bigr)   \norm{\bu - \bu^k}_\bB^2.
 \end{aligned}
 \]
 
 Let  $\hat \omega :=\frac{\omega_1(1-\zeta) + \zeta}{1 - 2\zeta}$.
 By the choice of $\omega_1$, there exists $\beta \in (0, \norm{\bB}^{-\frac12})$ such that $\hat\omega \leq  \kappa(\bB)^{-\frac12}(1 - \norm{\bB} \beta^2)^{\frac12}$.
 Let $\hat\Lambda \in  \FStree$ with $\hat\Lambda \supset \Lambda^k$ be of minimal cardinality such that
 \[
   \norm{( \bB \bu^k - \bbf )|_{\hat\Lambda} } \geq \hat\omega \norm{ \bB \bu^k - \bbf  } .
 \]
 Then 
 \[
   \begin{aligned}
     \norm{\br^k|_{\hat\Lambda}} &\geq  \norm{(\bB \bu^k - \bbf)|_{\hat\Lambda}} - \norm{ \br^k - (\bB\bu^k - \bbf) } \\
      &\geq \hat \omega \norm{ \bB\bu^k - \bbf }  - \norm{ \br^k - (\bB\bu^k - \bbf) }  \\
       & \geq \hat\omega \norm{\br^k} - (\hat\omega + 1)  \norm{ \br^k - (\bB\bu^k - \bbf) }  \\
       &\geq \left(  \hat\omega - \frac{(\hat\omega + 1) \zeta}{1 - \zeta} \right) \norm{\br^k}   \\
       & = \omega_1 \norm{\br^k}.
   \end{aligned}
 \]
With Lemma \ref{ghslemma} and \eqref{bulk2}, we thus obtain
 \[
 \begin{aligned}
    \#( \Lambda^{k+1} \setminus \Lambda^k)  &  \lesssim \#(\hat\Lambda\setminus \Lambda^k)    \\
      &\leq 
     \min \bigl\{  \#\bar\Lambda\colon \bar\Lambda \in \FStree,\; \min_{\supp \bv \subseteq \bar \Lambda} \norm{\bu - \bv} \leq \beta \norm{\bu - \bu^k}_{\bB}  \bigr\},
     \end{aligned}
 \]
 completing the proof.
\end{proof}

\begin{proof}[Proof of Theorem \ref{complexitythm}(i)]
From Lemma \ref{lmm:card}, we directly obtain convergence of $\bu^k$ to $\bu$. Moreover, since $\# \Lambda^0 = 0$,
\[
\begin{aligned}
  \#\Lambda^k  & = \sum_{i=1}^k \#(\Lambda^i \setminus \Lambda^{i-1})  \\
  &  \lesssim  \sum_{i=0}^{k-1} \min \bigl\{  \#\bar\Lambda\colon \bar\Lambda \in \FStree,\; \min_{\supp \bv \subseteq \bar \Lambda} \norm{\bu - \bv} \leq \beta \norm{\bu - \bu^{i}}_{\bB}  \bigr\}.
  \end{aligned}
\]
By our assumptions on $\bu$ and by Corollary \ref{cor:treebestapproxnorm}, for any $p>0$ such that $\frac{1}{p} < \frac{\alpha}{d} + \frac12$ and $s = \frac1p - \frac12$,
\[
   \min \bigl\{  \#\bar\Lambda\colon \bar\Lambda \in \FStree,\; \min_{\supp \bv \subseteq \bar \Lambda} \norm{\bu - \bv} \leq \beta \norm{\bu - \bu^{i}}_{\bB}  \bigr\}  \lesssim ( \beta \norm{\bu - \bu^i}_\bB )^{-\frac1s}   \norm{ \bu }_{\mathrm{t},p}^{\frac1s}.
\]
Altogether, using in addition that $\norm{\bu - \bu^{k-1}}_\bB \leq \rho^{k-1-i} \norm{\bu - \bu^i}_\bB$, this gives
\[
  \#\Lambda^k \lesssim   \norm{ \bu - \bu^{k-1}}_\bB^{-\frac1s}  \norm{ \bu }_{\mathrm{t},p}^{\frac1s}  \sum_{i=0}^{k-1} \rho^{\frac1s (k-1 - i)}
   \lesssim  C_s \varepsilon^{-\frac1s}   \norm{ \bu }_{\mathrm{t},p}^{\frac1s} 
\]
with $C_s>0$ independent of $\bu$ and $\varepsilon$, where we have used $\norm{\bu - \bu^{k-1}}_\bB \gtrsim \varepsilon$.
\end{proof}

\subsection{Computational complexity}

For understanding the total number of operations required for the adaptive scheme, in our present setting we need to consider the costs of handling of multi-indices in $\cF$, which can be of arbitrary length, as discussed in Sec.~\ref{sec:multiindices}. \M{Here, with $\Lambda^k$ as in Algorithm \ref{AGM2}, we use the notation}
\[
    \M{ F^k = \bigl\{ \nu \in \cF \colon \text{$(\nu,\lambda) \in \Lambda^k$ for some $\lambda \in \cS$} \bigr\} \,.}
\]
\M{The costs for handling multi-indices} enter into the bounds \eqref{ropest} and \eqref{gsopest} for $\RESAPPROX$ and $\GALSOLVE$, respectively, and thus depend on the largest arising support size of a multi-index. This quantity can be controlled by means of the following simple estimate, by which we can subsequently ensure that the costs for each multi-index operation are of order $\mathcal{O}(1+\abs{\log\varepsilon})$.

\begin{prop}\label{prop:suppest}
\M{For $k\in\N$, at iteration} $k$ of Algorithm \ref{AGM2}, one has $\max_{\nu \in F^k} \#\supp \nu \leq k - 1$.
\end{prop}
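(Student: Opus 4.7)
The plan is to argue by induction on $k$, where $F^k := \{\nu \in \cF : (\nu,\lambda) \in \Lambda^k \text{ for some } \lambda \in \cS\}$, carefully tracking how the set of active stochastic multi-indices can grow during one iteration of Algorithm \ref{AGM2}.

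For the base case $k = 1$, I would use that $\bu^0 = 0$ and $\Lambda^0 = \emptyset$. In the call $\RESAPPROX(\bu^0; \dots)$, the routine $\APPLY$ of Algorithm \ref{Apply_nosort} exits immediately in step \textbf{(i)} with empty output. Consequently the only contribution to the residual approximation in \eqref{resapprox_nu} is the term $\delta_{0,\nu} f$, carried by the single index $\nu = 0$. Since $\TREEAPPROX$ only adds descendants (in the spatial tree) of indices already in $\tilde\Lambda^1$ and $\GALSOLVE$ returns an iterate supported in $\Lambda^1$, we have $F^1 \subseteq \{0\}$ and hence $\max_{\nu \in F^1}\#\supp\nu = \#\supp 0 = 0 = 1-1$.

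For the inductive step, assume $\max_{\nu \in F^k}\#\supp\nu \leq k-1$. The crucial observation is that for one application of $\RESAPPROX$ to $\bu^k$, the $\cF$-support of the output $\br^k$ satisfies
\begin{equation*}
  F^+ \,\subseteq\, \{0\} \,\cup\, F^k \,\cup\, \{ \nu' \pm e_\mu : \nu' \in F^k,\ \mu \in \cM \}.
\end{equation*}
This follows from the structure of the semidiscrete approximation $\bw$ in \eqref{semidiscrwdef}, \eqref{semidiscrwdefM}: every summand is of the form $\bM_\mu \otimes \bA_\mu$ applied to some $\bd_j$ with $\suppF \bd_j \subseteq F^k$, and by the three-term recursion of the Legendre polynomials $\bM_0$ is diagonal while each $\bM_\mu$ for $\mu \in \cM$ is bidiagonal with nonzero entries only at positions shifting the stochastic index by $\pm e_\mu$; the additional index $\nu = 0$ arises only from the Kronecker term $\delta_{0,\nu} f$. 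Because $\TREEAPPROX$ operates purely on the spatial tree direction---the roots $\Delta_0$ in \eqref{eq:Delta0def} lie over existing $\nu \in F^k \cup F^+$ and children are taken only in the spatial index---and $\GALSOLVE$ outputs an iterate supported in $\Lambda^{k+1}$, we obtain $F^{k+1} \subseteq F^k \cup F^+$. For any $\nu \in F^{k+1} \setminus F^k$ either $\nu = 0$ (with $\#\supp\nu = 0$), or $\nu = \nu' \pm e_\mu$ for some $\nu' \in F^k$, in which case $\#\supp\nu \leq \#\supp\nu' + 1 \leq k$, which closes the induction.

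The only mild obstacle is verifying that $\TREEAPPROX$ and $\GALSOLVE$ do not silently widen the stochastic index set beyond $F^k \cup F^+$; this is immediate from the definition of $\Delta_0$ in \eqref{eq:Delta0def} (children are taken only along spatial tree edges) and from the input/output specification of Algorithm \ref{galsolve}. Everything else is bookkeeping based on the bidiagonal structure of the matrices $\bM_\mu$.
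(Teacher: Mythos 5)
Your proof is correct and follows the same route as the paper: you establish the base case $F^1 = \{0\}$ (arguing that $\RESAPPROX(\bu^0;\ldots)$ contributes only the index $\nu = 0$ since $\bu^0 = 0$) and then use the diagonal/bidiagonal structure of the matrices $\bM_\mu$ to show $\max_{\nu\in F^{k+1}}\#\supp\nu \leq \max_{\nu\in F^k}\#\supp\nu + 1$, which is exactly the paper's one-line argument. Your version is merely more explicit in verifying that $\TREEAPPROX$ and $\GALSOLVE$ do not enlarge the set of active stochastic multi-indices beyond those produced by $\APPLY$, which the paper leaves implicit.
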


\begin{proof}
Starting with $F^1 =\{ 0 \} \subset \cF$, due to the bidiagonal structure of the matrices $\bM_\mu$, we have $\max_{\nu \in F^{k+1}} \#\supp \nu \leq \max_{\nu \in F^k} \#\supp \nu + 1$ for each $k$.
\end{proof}

A comparable and slightly sharper bound on the support of arising multi-indices has also been obtained under different assumptions in \cite[Prop.~2.21]{MR3997838} in the context of sparse interpolation and quadrature for \eqref{affinepde}.

\begin{remark}
	In \cite{Dijkema:09}, related issues concerning indexing costs are addressed for wavelet methods applied to problems of fixed but potentially high dimensionality. There, the costs of the handling of wavelet indices also increase with dimension, but are not coupled to the approximation accuracy by an accuracy-dependent effective dimensionality as in the present case.
	As discussed in \cite[\S6]{Dijkema:09}, for wavelet methods working on unconstrained index sets, additional factors in the computational costs that are logarithmic with respect to the error are also difficult to avoid. For the spatial discretization, this issue is circumvented in our present setting due to the restriction to wavelet index sets with tree structure.
\end{remark}

\begin{proof}[Proof of Theorem \ref{complexitythm}(ii)]
 For the call of $\text{\textsc{ResApprox}}(\bu^k; \zeta, \frac{\zeta}{1+\zeta} \norm{\br^{k-1}}, \varepsilon)$ in \M{iteration} $k$ of Algorithm \ref{AGM2}, let $(\br^k,\eta_k, b_k)$ be the corresponding return values. Let $K$ be the stopping index of Algorithm \ref{AGM2}, that is, $b_K \leq \varepsilon < b_{K-1}$.
 By construction, we have $\eta_k \gtrsim \varepsilon$ for $k = 0,\ldots, K$ and $\eta_k \sim \norm{\br^k}  \sim \norm{\M{\bB} \bu^k -\bbf} \M{\sim \norm{\bu^k - \bu}_\bB}$ for $k=0,\ldots,K-1$. \M{With Lemma \ref{lmm:card}, we obtain $\norm{\bu - \bu^k}_\bB \leq \rho^{k-i} \norm{\bu - \bu^i}_\bB$} and thus $\eta_k \lesssim \rho^{k-i} \eta_i$ for $i< k < K$, which implies $1+\abs{\log \eta_k }\gtrsim k$, and there exists $C>0$ such that $\norm{\bu^k} \leq C$ for all $k$.
 
In step $k$, by Theorem \ref{complexitythm}(i), 
we have 
\begin{equation}\label{eq:suppuk}
  \# \suppF \bu^k \leq \#\supp \bu^k \M{\leq \#\treesupp \bu^k} \lesssim \eta^{-\frac1s}_{k-1}  \norm{\bu}_{\mathrm{t},p}^{\frac1s} \,.
\end{equation}
By Remark \ref{rem:treenormequiv} and \cite[Lemma 4.11]{CDD01},
\[
   \bignorm{\bigl( \norm{\bu_\nu} \bigr)_{\nu \in \cF}}_{\cA^s}  \lesssim  \norm{ \bu^k }_{\cA^s} \lesssim \norm{  \bu }_{\cA^s} +  ( \# \supp \bu^k )^s \norm{ \bu - \bu^k} \lesssim \norm{\bu}_{\mathrm{t},p}\,.
\]

For the number of operations required for evaluating $\br^k$ for each $k$ using Algorithm \ref{optscheme}, we apply Theorem \ref{thm:resapprox}  with $F = F^{k-1}$, $\eta_0 = \frac{\delta}{1+\delta} \norm{\br^{k-1}}$, $\eta_k = \eta$.
Using that $\#\cT(f)\lesssim 1$, and combining Theorem \ref{thm:resapprox} with Proposition \ref{prop:suppest} and  \eqref{eq:suppuk},
 the number of operations required for the evaluation of $\br^k$ can be estimated up to a multiplicative constant by $1 + \eta^{-\frac1s}_{k}   \norm{\bu}_{\mathrm{t},p}^{\frac{1}{s}} (  1 + \abs{ \log \eta_k } + \log \norm{\bu}_{\mathrm{t},p} )$, and the same bound holds for $\#\tilde\Lambda^{k+1}$.

The number of operations for performing $\TREEAPPROX$ on $\br^k$ using binary binning according to Remark \ref{rem:binning} is linear in $\#\tilde\Lambda^{k+1}$.
Concerning the call of $\GALSOLVE$, note first that $\norm{(\bB\bu^k-\bbf)|_{\Lambda^{k+1}}} \leq \norm{\bB\bu^k-\bbf} \leq b_k$.
Since if $b_k>\varepsilon$, we have $b_k \sim \eta_k$, we also obtain $b_k \lesssim \gamma \norm{\br^k}$. Thus the ratio $b_k / (\gamma \norm{\br^k})$ is uniformly bounded, and as a consequence of Proposition \ref{prop:galerkin}, the costs of $\GALSOLVE$ can also be estimated up to a multiplicative constant by $1+\eta^{-\frac1s}_{k}   \norm{\bu}_{\mathrm{t},p}^{\frac{1}{s}} (  1 + \abs{ \log \eta_k } + \log \norm{\bu}_{\mathrm{t},p} )$.
\end{proof}

\section{Numerical Experiments}

The adaptive Galerkin method \M{Algorithm} \ref{AGM2} was implemented for spatial dimensions $d=1,2$ using the Julia programming language, version 1.5.3. The numerical experiments were performed on a single core of a Dell Precision 7820 workstation with Xeon Silver 4110 processor.

For simplicity, we take $\Omega=(0,1)^d$.
For the random fields $a(y)$, we use an expansion in terms of hierarchical hat functions formed by dilations and translations of $\theta(x)=\M{\max\{ 1-|2x-1|, 0 \}}$. Specifically, for $d=1$, $\theta_{\mu}$ with $\mu = (\ell, k)$ is given by 
\begin{equation}\label{theta1d}
\theta_{\ell, k}(x) := c 2^{- \alpha \ell} \theta(2^\ell x - k), \quad k = 0,\ldots,2^\ell-1, \; \ell \in \N_0.
\end{equation}
This yields a wavelet-type multilevel structure \eqref{multilevel1}-\eqref{multilevel2} satisfying Assumptions \ref{ass:wavelettheta}, where
\[
\cM = \{(\ell,k)\colon k = 0,\ldots,2^\ell-1, \; \ell \geq 0 \}
\]
with level parameters $\abs{(\ell,k)} = \ell$.
For $d=2$, we take the isotropic product hierarchical hat functions
\begin{equation}\label{theta2d}
\theta_{\ell, k_1, k_2}(x_1,x_2) := c 2^{- \alpha \ell} \M{\theta(2^\ell x_1 - k_1)\,\theta(2^\ell x_2 - k_2)}, \quad (\ell,k_1,k_2) \in \cM,
\end{equation}
with 
\[
\cM = \left\{ \textstyle(\ell,k_1,k_2) \colon \;\ell \in \N_0,\; k_1, k_2 = 0,\frac{1}{2},\ldots,2^\ell-\frac{3}{2},2^\ell-1  \text{ with } ( k_1\in \N_0 \vee k_2\in \N_0)\right\}.
\]
For the spatial wavelet basis $\Psi$, we use piecewise polynomial $L_2$-orthonormal and continuously differentiable Donovan-Geronimo-Hardin multiwavelets \cite{DGH:99} of approximation order seven.

In the practical implementation of Algorithm \ref{optscheme}, we use some simplifications that have no impact on the observed optimal rates. Specifically, on the one hand, in step \textbf{(vi)} of Algorithm \ref{optscheme}, we directly compute integrals of products of wavelets and piecewise polynomial residuals. In our tests, this is quantitatively favorable, since it avoids some overhead the for multiscale transformations in step \textbf{(vi)}. On the other hand, Galerkin problems are solved by direct application of inexact conjugate gradient iteration in wavelet representation where previously computed matrix entries are cached. 

The quantitative performance of the scheme can also be improved by choosing some of its parameters differently from the values used in the convergence analysis. This is a common observation in such methods (see, e.g., \cite{GHS,Dijkema:09}) relating to the lack of sharpness in various estimates that are used. In particular, $\omega_0$ can be chosen significantly larger than the values allowed by \eqref{eq:AGMparams} without impact on the optimality of the method, but with an improvement of the quantitative performance. Similarly, choosing $C_\bB$ in \eqref{eq:elljdef} larger than a certain value (which is observed to be significantly lower than the one from Proposition \ref{BoundSemiDisc}) does not change the residual estimates, but only increases the computational costs. Moreover, the quantitative performance can also be improved by decreasing the tolerance $\eta$ in step \textbf{(ii)} of Algorithm \ref{optscheme} by a factor different from two. Especially for small $\alpha$, taking this factor as $2^\alpha$ or smaller leads to a more conservative increase in the parameters $\ell_j$ in \eqref{semidiscrwdef}, so that these are not chosen larger than necessary in the final iteration of the loop.

The adaptive scheme is tested with $\alpha = \frac12,\frac23,1,2$ for both $d=1$ and $d=2$. We take $f \equiv 1$ and $c= \frac{1}{10}$ in \eqref{theta1d}, \eqref{theta2d}. The parameters of the scheme are chosen as $\omega_0 = \frac12$, $C_\bB = \frac{1}{100}$, and $\hat\ell = 1$; in step \textbf{(ii)} of Algorithm \ref{optscheme}, we replace $\eta$ by $\eta / 2^{\alpha/d}$.
The results of the numerical tests are shown in Figure \ref{plot:time1d} for $d=1$ and in Figure \ref{plot:time2d} for $d=2$.

The results are compared to the convergence rates that are expected for $\alpha \leq 1$ in view of Theorem \ref{complexitythm} combined with Proposition \ref{prop:treeapprox} for $d=2$ and with Remark \ref{remark1d} for $d=1$.
 For $d=1$, the asymptotic growth of the runtime (in seconds) and the total number of degrees of freedom $\#\Lambda = \#\supp \bu^k$ in terms of the residual error bound $\varepsilon$ is approximately of order $\cO(\varepsilon^{-{3}/{(2\alpha)} })$, which is consistent with the expected limiting rate $\frac23 \alpha$. 
For $d=2$, we instead observe $\cO(\varepsilon^{-{2}/{\alpha} })$, which is consistent with the expected rate $\frac\alpha2$. For both values of $d$, we obtain the analogous result also for $\alpha=2$, which is not covered by the existing approximability analysis.

\begin{figure}
	\includegraphics[width=14.5cm]{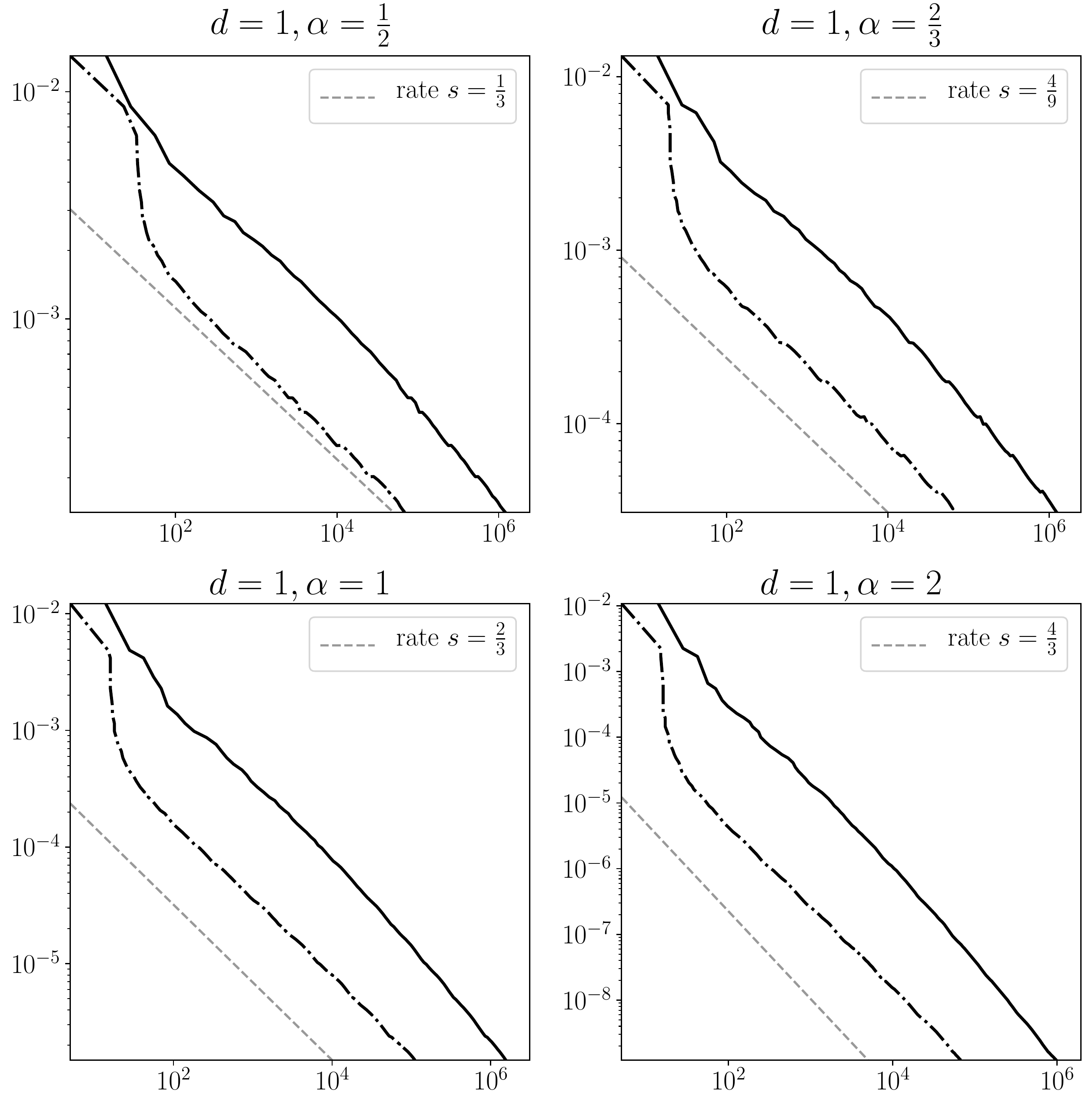}
	\caption{Computed residual bounds for $d=1$ as a function of total number of degrees of freedom of the current approximation of $\bu$ (solid lines) and elapsed computation time (dash-dotted line).}
	\label{plot:time1d}
\end{figure}
\begin{figure}
	\includegraphics[width=14.5cm]{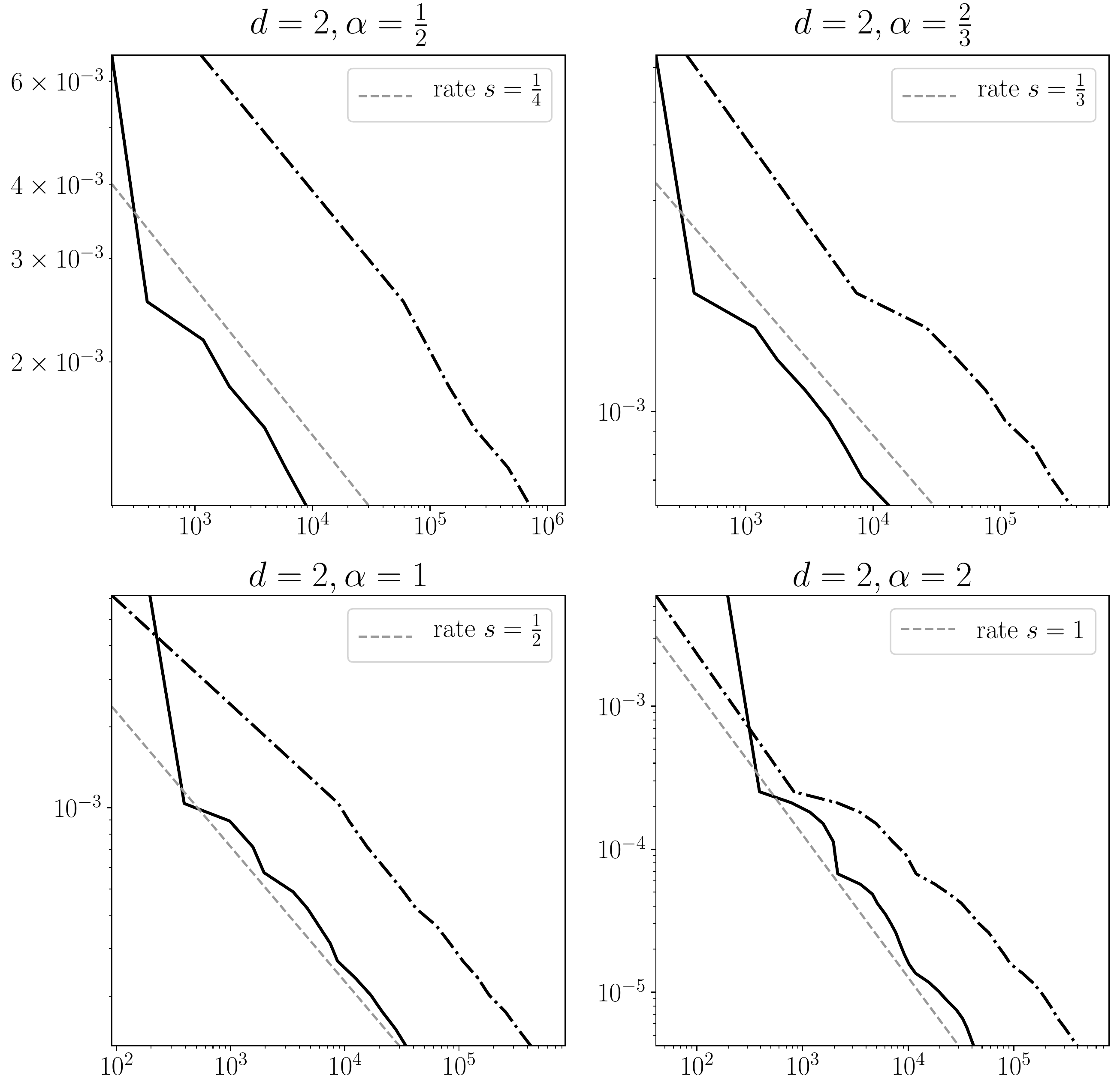}
	\caption{Computed residual bounds for $d=2$ as a function of total number of degrees of freedom of the current approximation of $\bu$ (solid lines) and elapsed computation time (dash-dotted line).}
	\label{plot:time2d}
\end{figure}

\section{Conclusions}

We have shown the adaptive Galerkin method proposed in this work to converge at optimal rates up to $\frac{\alpha}{d}$, where $d$ is the spatial dimension of the diffusion problem \eqref{eq:diffusionequation} and $\alpha$ is the decay parameter in the multilevel expansion of the random diffusion coefficient, which corresponds to the H\"older smoothness of its realizations. The computational costs are guaranteed to scale linearly up to a logarithmic factor with respect to the number of degrees of freedom. To the best of our knowledge, this is the first method with this property in the case where the approximability is limited by the random field rather than by the approximation order of the spatial basis.

Our numerical results confirm the approximability results for $\alpha \in (0,1]$ established in \cite{BCDS}: for $d=2$, we observe a rate $\frac{\alpha}{2}$, whereas in the special case $d=1$, we obtain $\frac{2}{3} \alpha$. The numerical tests also support the conjecture that one has the analogous rates of best approximation for all $\alpha>1$.

On the one hand, the use of a piecewise polynomial wavelet Riesz basis helps to avoid a number of technical issues in the complexity analysis. On the other hand, this also makes the method comparably expensive from a quantitative point of view.
However, it actually generates standard adaptive spline approximations of the Legendre coefficients $u_\nu$ and relies on wavelets mainly for approximating residuals in the appropriate dual norm. The basic construction of the method also carries over to spatial finite element approximations, and a variant based on standard adaptive finite elements will be the subject of a forthcoming work.

\bibliographystyle{amsplain}
\bibliography{BVadaptsg}

\begin{appendix}

\section{Tree Approximation}\label{app:trees}

\begin{proof}[Proof of Lemma \ref{lmm:treeupper}]
Let $\delta$ be any leaf. Let $\delta_0, \dots, \delta_{\ell-1}, \delta_\ell = \delta$ be the ancestors of $\delta$, in order, with $\delta_0$ the only root that is an ancestor of $\delta$. By definition of $\tilde e$,
\[
\tilde e (\delta)^{-1} = e(\delta_{\ell})^{-1} + \tilde e (\delta_{\ell-1})^{-1} 
= e(\delta_{\ell})^{-1} + e (\delta_{\ell-1})^{-1} + \tilde e (\delta_{\ell-2})^{-1}
= \cdots = \sum_{j = 0}^\ell  e(\delta_{j})^{-1}.
\]
Using that $\tilde e(\delta)\le \eta $ and multiplying by $e (\delta) \tilde e (\delta) $, we obtain 
\begin{equation}
\label{treeupper:leafbound}
e (\delta) 
=
\tilde e (\delta) 
\sum_{j = 0}^\ell e (\delta) 
  e(\delta_{j})^{-1}
  \le
\eta
\sum_{j = 0}^\ell \frac{ e (\delta) }{
e(\delta_{j}) }  .
\end{equation}
To take the sum over all leaves $\delta\in \mathrm{L}(\Lambda)$, we consider the subtree 
$
\Delta_{\tilde \delta} = \{ \delta \in \Lambda \colon  \delta \preceq \tilde\delta \}
$
that is rooted at $\tilde\delta \in \Lambda$.
For any leaf $\delta \in \mathrm{L}(\Delta_{\tilde \delta} )\subset  \mathrm{L}(\Lambda ) $ of such a subtree, we consider the contribution 
$\frac{ e (\delta) }{e(\tilde\delta) } $ of the ancestor $\delta_j = \tilde \delta$ to the sum on the \M{right-hand side} of \eqref{treeupper:leafbound}. Thus
\[
\sum_{\delta \in \mathrm{L}(\Lambda) } e (\delta) 
\le
\eta
\sum_{\tilde \delta \in \Lambda } \sum_{\delta \in \mathrm{L}(\Delta_{\tilde \delta}) }
 \frac{ e (\delta) }{
	e(\tilde \delta) }  .
\]
Due to the subadditivity  of $e$, we have  $\sum_{\delta \in \mathrm{L}(\Delta_{\tilde \delta}) } \frac{ e (\delta) }{e(\tilde \delta) } \le 1$ (trivially for $\tilde \delta \in \mathrm{L}(\Lambda)$ and by applying \eqref{subadd} inductively, otherwise), and consequently
\[
\sum_{\delta \in \mathrm{L}(\Lambda) } e (\delta) 
\le
\eta
\sum_{\tilde \delta \in \Lambda } 1 = \eta \#\Lambda. \qedhere
\] 
\end{proof}

\begin{proof}[Proof of Lemma \ref{lmm:treelower}]
We first consider the case that $\delta_0$ is not a root and has a parent $ \delta_0^*$. 
We prove the slightly stronger statement 
\[
e(\delta_0) \ge \eta \left(\#\Lambda_{\delta_0} + \frac{e(\delta_0)}{\tilde e(\delta_0^*)} \right)
\]
by induction on the size of $\Lambda_{\delta_0}$. 

If $\#\Lambda_{\delta_0}=1$, then we have only the node $\delta_0$ in this tree. By definition of $\tilde e$, we have
\[
e(\delta_0) 
= e(\delta_0) \tilde e(\delta_0) \tilde e(\delta_0)^{-1}
= e(\delta_0) \tilde e(\delta_0) (e(\delta_0)^{-1} + \tilde e(\delta_0^*)^{-1})
.
\]
It follows that 
\[
\sum_{\delta \in \mathrm{L}(\Lambda)} e(\delta) = e(\delta_0) 
= \tilde e(\delta_0) \left( 1 + \frac{e(\delta_0)}{\tilde e(\delta_0^*)} \right)
\ge \eta
\left( 1 + \frac{e(\delta_0)}{\tilde e(\delta_0^*)} \right)
.
\]
Now let $\delta_0\in \Lambda$ be any node with a parent $\delta_0^*$, and let $\Lambda_{\delta_0}$ be a subtree of $\Lambda$ rooted at $\delta_0$ such that $\tilde e(\delta) \geq \eta$ for all $\delta \in \Lambda_{\delta_0}$,
and assume that 
\[
e(\tilde \delta_0) \ge \eta \left(\# \Lambda_{\tilde \delta_0} + \frac{e(\tilde\delta_0)}{\tilde e(\tilde \delta_0^*)} \right)
\]
for any node $\tilde \delta_0\in\Lambda$ with a parent $\tilde \delta_0^*$ and any subtree  $\Lambda_{\tilde \delta_0}$ rooted at $\tilde \delta_0$ such that $\tilde e(\delta) \geq \eta$ for all $\delta \in \Lambda_{\tilde \delta_0}$ and $\# \Lambda_{\tilde \delta_0} < \# \Lambda_{\delta_0}$.

Consider any child $\delta \in \mathrm{C}(\delta_0)$, then for the subtree of $\Lambda_{\delta_0}$ rooted at $\delta$, which we will denote by $\Lambda_{\delta}$, we have $\#\Lambda_{\delta} < \# \Lambda_{\delta_0}$ and
$\tilde e(\delta) \geq \eta$ for all $\delta \in \Lambda_{\tilde \delta_0}$. By applying the induction hypothesis to each child of $\delta_0$, we get 
\[
\sum_{\delta\in \mathrm{C}(\delta_0)} e(\delta) \ge\left (\sum_{\delta\in \mathrm{C}(\delta_0)} \# \Lambda_{ \delta} + \frac{\sum_{\delta\in \mathrm{C}(\delta_0)} e(\delta)}{\tilde e( \delta_0)} \right) \eta
\ge\left (\sum_{\delta\in \mathrm{C}(\delta_0)} \# \Lambda_{ \delta} + \frac{e(\delta_0)}{\tilde e( \delta_0)} \right) \eta
.
\]
Using the definition of $\tilde e$, we get
\[
\sum_{\delta\in \mathrm{C}(\delta_0)} e(\delta) \ge
\left (\sum_{\delta\in \mathrm{C}(\delta_0)} \# \Lambda_{ \delta} + 1 + \frac{e(\delta_0)}{ e( \delta_0^*)} \right) \eta
=
\left( \# \Lambda_{ \delta_0}  + \frac{e(\delta_0)}{ e( \delta_0^*)} \right) \eta
.
\]
This concludes the proof in the case that $\delta_0$ has a parent. 

It remains to prove the original statement in the case that $\delta_0$ is a root. If $\# \Lambda_{ \delta_0} = 1$, the statement is trivial. If $\delta_0$ has children in $\Lambda_{ \delta_0}$, then we know from the first part of the proof that
\[
\begin{aligned}
\sum_{\delta\in \mathrm{C}(\delta_0)} e(\delta) &\ge \left (\sum_{\delta\in \mathrm{C}(\delta_0)} \# \Lambda_{ \delta} + \frac{\sum_{\delta\in \mathrm{C}(\delta_0)} e(\delta)}{\tilde e( \delta_0)} \right) \eta  \\
 &\ge \left (\sum_{\delta\in \mathrm{C}(\delta_0)} \# \Lambda_{ \delta} + \frac{e(\delta_0)}{\tilde e( \delta_0)} \right) \eta
= (\# \Lambda_{ \delta_0} ) \eta \,. \qedhere
\end{aligned}
\]

\end{proof}

\end{appendix}

\end{document}